\newtheorem{theorem}{Theorem}[section]
\newtheorem{lemma}[theorem]{Lemma}
\newtheorem{corollary}[theorem]{Corollary}
\newtheorem{proposition}[theorem]{Proposition}
\newtheorem{fact}[theorem]{Fact}
\newtheorem{exercise}[theorem]{Exercise}
\theoremstyle{definition}
\newtheorem{definition}[theorem]{Definition}
\newtheorem{example}[theorem]{Example}
\newtheorem{remark}[theorem]{Remark}
\def\seq{\subseteq}
\def\inv{^{\text{-}1}}
\def\st{\textnormal{st}}
\def\smd{\raisebox{.4pt}{\textrm{\scriptsize{~\!$\triangle$\!~}}}}
\def\cA{\mathcal{A}}
\def\cB{\mathcal{B}}
\def\cC{\mathcal{C}}
\def\cF{\mathcal{F}}
\def\cH{\mathcal{H}}
\def\cN{\mathcal{N}}
\def\cP{\mathcal{P}}
\def\N{\mathbb{N}}
\def\Z{\mathbb{Z}}
\def\C{\mathbb{C}}
\def\Stab{\operatorname{Stab}}
\def\Sym{\operatorname{Sym}}
\def\GL{\operatorname{GL}}
\def\Sg{S^{\textnormal{g}}}
\def\fin{\textnormal{{f}{i}}}
\newcommand{\miff}{\makebox[.4in]{$\Leftrightarrow$}}
   \def\MR#1{}
\title{Stability in a group}
\author{Gabriel Conant}
\date{January 22, 2021}
\thanks{Partially supported by NSF grant DMS-1855503}
\address{DPMMS\\
University of Cambridge\\
Cambridge, CB3 0WB, UK}
\email{gconant@maths.cam.ac.uk}
\begin{document}

\begin{abstract}
We develop local stable group theory directly from topological dynamics, and extend the main tools in this subject to the setting of  stability ``in a model". Specifically, given a group $G$, we analyze the structure of sets $A\seq G$ such that the bipartite relation $xy\in A$ omits infinite half-graphs. Our proofs rely on the characterization of model-theoretic stability via Grothendieck's ``double-limit" theorem (as shown by Ben Yaacov), and the work of Ellis and Nerurkar on weakly almost periodic $G$-flows. 
\end{abstract}

\maketitle

\vspace{-5pt}

\section{Introduction}

One of the most well established and fruitful areas of model theory is the study of groups definable in \emph{stable} first-order theories, which connects mathematical logic to algebraic geometry, topological dynamics, and combinatorial number theory. At the heart of this connection is the notion of a \emph{generic} subset of a group. Given a group $G$, we say $A\seq G$ is \textbf{generic} if $G$ can be covered by finitely many left translates of $A$. An important fact in stable group theory is that the generic definable subsets of a stable group are \emph{partition regular}, i.e., the union of two non-generic definable sets is non-generic. Consequently, there exist \emph{generic types} (i.e., ultrafilters of generic definable sets), which provide a model theoretic analogue of \emph{generic points} in the sense of algebraic groups and of group actions on compact Hausdorff spaces.

Let us recall the basic definitions of stability theory (although we note that no model theory will be required for our main results). Given a complete first-order theory $T$ in a language $\mathcal{L}$, we say that an $\mathcal{L}$-formula $\varphi(x,y)$  is \textbf{stable in $T$} if for some $k\geq 1$, there is no model $M\models T$ containing tuples $a_1,\ldots,a_k,b_1,\ldots,b_k$ such that  $M\models\varphi(a_i,b_j)$ if and only if $i\leq j$ (in this case, we also say $\varphi(x,y)$ is \textbf{$k$-stable in $T$}). A theory $T$ is \textbf{stable} if every formula is stable in $T$. A \textbf{stable group} is a group whose underlying set and group operation are definable in a stable theory. 

A canonical example of a stable theory is the complete theory of an algebraically closed field. So algebraic groups provide natural examples of stable groups. The model-theoretic study of algebraic groups motivated much of the early work in stable group theory, and has now developed into an entire industry focusing on groups of ``finite Morley rank". Another important example of a stable group is any abelian group (in the pure group language). These are special cases of ``$1$-based"  groups, whose theory was developed by Hrushovski and Pillay \cite{HrPi1B}. This notion is related to the Mordell-Lang Conjecture which, in model-theoretic language, says that if $G$ is a finite rank subgroup of  a semiabelian complex variety, then the first-order structure on $G$ induced from the complex field is stable and $1$-based.  In \cite{HrMLFF}, Hrushovski combined the study of $1$-based groups and groups of finite Morley rank to prove the Mordell-Lang Conjecture for function fields in all characteristics. 

A great deal of stability theory can also be developed ``locally", i.e., for a single stable formula (see, e.g., Shelah's ``Unstable Formula Theorem" \cite[Theorem 2.2]{Shbook}). In \cite{HrPiGLF}, Hrushovski and Pillay use stable formulas to prove that any group definable in a pseudofinite field $F$ (whose complete theory is necessarily unstable) is virtually isogenous with $G(F)$, where $G$ is an algebraic group defined over $F$.   An especially spectacular display of the effectiveness of local stability is Hrushovski's work from \cite{HruAG} on the structure of approximate groups, which uses a very general ``Stabilizer Theorem" modeled after early work of Zilber on groups of finite Morley rank.

More recently, interactions with functional analysis have motivated the study of stability ``in a model". Given a first-order structure $M$, we say that a formula $\varphi(x,y)$ is \textbf{stable in $M$} if there do not exist sequences $(a_i)_{i\in I}$ and $(b_i)_{i\in I}$ from $M$, indexed by an infinite linear order $I$, such that $M\models \varphi(a_i,b_j)$ if and only if $i\leq j$. This is weaker than stability of $\varphi(x,y)$ in a theory $T$ (which is equivalent to stability in an \emph{$\aleph_0$-saturated} model of $T$). In \cite{BYGro}, Ben Yaacov established a direct connection between stability in a model and Grothendieck's characterization in \cite{GroWAP} of relatively weakly compact sets in certain Banach spaces. A natural question, which we address here, is  how this connection applies to stable group theory.

Topological dynamics has played a major role in the model theory of groups since the work of Newelski \cite{NewTD}, which provided a model-theoretic interpretation of the ``Ellis semigroup" of a \emph{$G$-flow} (i.e., a compact space with an action of  $G$ by homeomorphisms). Moreover, certain parts of a recent preprint of Hrushovski, Krupi{\'n}ski, and Pillay \cite{HKPda} indicate a close connection between stable group theory and results of Ellis and Nerurkar \cite{EllNer} on \emph{weakly almost periodic} $G$-flows.  In this paper, we will develop stable group theory entirely from  \cite{EllNer} and in the more general setting of local stability ``in a model". It is interesting to note that the original development of (global) stable group theory was roughly contemporaneous with \cite{EllNer} and related work on almost periodic minimal flows (e.g., \cite{Ausbook}, \cite{EllLTD}).

 Given a group $G$, we call a set $A\seq G$ \textbf{stable in $G$} if there do not exist sequences $(a_i)_{i\in I}$ and $(b_i)_{i\in I}$ from $G$, indexed by an infinite linear order $I$, such that $a_ib_j\in A$ if and only if $i\leq j$ (i.e., the ``formula" $xy\in A$ is stable in the structure $(G,A)$ obtained by expanding $G$ with a predicate naming $A$). One of our main results is the following structure theorem for stable subsets of groups. 

\begin{theorem}\label{thm:intro}
Let $G$ be a group, and suppose that $\cB\seq\cP(G)$ is a left-invariant Boolean algebra such that every set in $\cB$ is stable in $G$.
\begin{enumerate}[$(a)$]
\item There is a unique left-invariant  probability measure $\mu$ on $\cB$.
\item If $A\in\cB$, then $\mu(A)>0$ if and only if $A$ is generic.
\item Suppose $A\in\cB$. Then there is a finite-index subgroup $H\leq G$, which is in $\cB$, and a set $Y\seq G$, which is a union of left cosets of $H$, such that $\mu(A\smd Y)=0$. So if $\cC$ is the set of left cosets of $H$ contained in $Y$, then $\mu(A)=\mu(Y)=|\cC|/[G:H]$.
\end{enumerate}
Moreover, if $\cB$ is bi-invariant then $\mu$ is bi-invariant and is also the unique right-invariant probability measure on $\cB$; and in part $(b)$ one may choose $H$ to be a normal subgroup of $G$.
\end{theorem}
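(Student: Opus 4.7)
My plan is to embed $\cB$ into the clopen algebra of a weakly almost periodic (WAP) $G$-flow and then transport the theorems of Ellis--Nerurkar to $\cB$. Let $X$ be the Stone space of $\cB$, on which $G$ acts by homeomorphisms via left translation; each $A\in\cB$ corresponds to a clopen $\hat A\seq X$, and the linear span of the $1_{\hat A}$ is uniformly dense in $C(X)$. The hypothesis that each $A\in\cB$ is stable in $G$ gives, by Ben Yaacov's reformulation of Grothendieck's double-limit theorem, that each orbit $\{g\cdot 1_{\hat A}:g\in G\}$ is relatively weakly compact in $C(X)$. Since WAP functions form a closed $G$-invariant unital subalgebra of $C(X)$, this forces $C(X)$ itself to consist of WAP functions, so $X$ is a WAP $G$-flow.

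Now apply Ellis--Nerurkar: $X$ carries a unique $G$-invariant Borel probability measure $\nu$, and the enveloping semigroup $E(X)$ is a compact topological semigroup of continuous maps whose minimal ideal is a compact topological group $K$ acting on the unique minimal subflow $M=\operatorname{supp}(\nu)$. Setting $\mu(A):=\nu(\hat A)$ gives the measure for (a), and uniqueness on $\cB$ is immediate because any left-invariant probability measure on $\cB$ extends via Stone duality to a $G$-invariant Radon measure on $X$, which must equal $\nu$. For (b), ``generic implies $\mu(A)\geq 1/n$'' follows by left invariance. For the converse I would use that $\mu(A)>0$ forces $\hat A$ to meet $M$ in a nonempty open set, then use minimality of $M$ together with the continuous retraction $X\to M$ coming from an idempotent in $K$ to pull finitely many translates of $\hat A$ covering $M$ back to a finite cover of $G$ by translates of $A$.

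For (c), fix $A\in\cB$ and set $H:=\{g\in G:\mu(gA\smd A)=0\}$, a subgroup of $G$. The crucial step is that stability forces only finitely many distinct equivalence classes $gA$ modulo $\mu$-null sets, giving $[G:H]<\infty$: if there were infinitely many, an application of Ramsey plus positivity of the relevant symmetric differences would encode an infinite half-graph inside $A$, contradicting stability. Once $[G:H]<\infty$, let $Y$ be the union of those left cosets $C$ of $H$ with $\mu(A\cap C)>0$; the $H$-invariance of $\mu$ restricted to $C$ forces $\mu(A\cap C)=\mu(C)$ for each such coset, yielding $\mu(A\smd Y)=0$ and the identity $\mu(A)=|\cC|/[G:H]$. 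For bi-invariance, if $\cB$ is also right-invariant then $A\mapsto\mu(Ag^{-1})$ is another left-invariant probability measure on $\cB$ and so equals $\mu$; the normal-subgroup refinement of (c) follows by replacing $H$ with its finite-index normal core $\bigcap_{g\in G}gHg^{-1}$, which stays in $\cB$ under bi-invariance.

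The principal obstacle is showing that $H$ actually belongs to $\cB$, rather than merely being defined modulo $\mu$-null sets. I expect this to require sharpening the pigeonhole above: one should show that the finite Boolean algebra generated by the translates $\{gA:g\in G\}$ modulo null sets is realized on the nose inside $\cB$, so that each atom (in particular, the coset of $H$ containing the identity) is literally a finite Boolean combination of translates of $A$. Establishing this ``rigidity'' of the quotient Boolean algebra, which is where the full strength of stability in a model must be used beyond the abstract WAP machinery, is the technical heart of the proof.
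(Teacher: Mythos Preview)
Your treatment of parts $(a)$ and $(b)$ is essentially the paper's approach: Stone duality, Grothendieck/Ben~Yaacov to get that $S(\cB)$ is WAP, then Ellis--Nerurkar for unique ergodicity and the minimal-subflow characterization of genericity. That is fine.

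The serious gap is in part $(c)$. Your candidate subgroup $H=\Stab_\mu(A)=\{g:\mu(gA\smd A)=0\}$ is the wrong object for this theorem, for two independent reasons.

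\emph{First}, your Ramsey sketch for $[G:H]<\infty$ does not go through. Knowing that $\mu(g_iA\smd g_jA)>0$ for infinitely many $g_i$ gives you, via (b), that each symmetric difference is generic, but this does not by itself produce an infinite half-graph in the relation $xy\in A$; you would need uniform control on where the positive-measure differences live, and stability ``in a model'' gives you no compactness to exploit here. In the paper this finiteness is not proved combinatorially at all: it falls out of the topological structure of $\Sg(\cB)$.

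\emph{Second}, even granting finite index, $\Stab_\mu(A)$ need only lie in $\cB^\sharp$, not in $\cB$ (the paper proves exactly this in its Section~5, using definability of types applied to the formula $x\in y^{-1}A\smd A$, whose dual instances are right translates of $A$). Your proposed ``rigidity'' fix---realizing the quotient Boolean algebra on the nose inside $\cB$---is not a known phenomenon and there is no reason to expect it in general.

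There is also a smaller slip: once you have $H=\Stab_\mu(A)$, the claim that $\mu(A\cap C)\in\{0,\mu(C)\}$ for each left coset $C=gH$ does not follow from left $H$-invariance of $\mu$ unless $H$ is normal; left multiplication by $H$ permutes the left cosets nontrivially, so you only get that $\mu(A\cap C)$ is constant along $H$-orbits of cosets, not that it is $0$ or full.

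The paper avoids all of this by never touching $\Stab_\mu(A)$. Instead it proves (its Proposition~4.11) that the sets $\Sg_{aH}(\cB)$, for $H\leq^{\cB}_{\fin}G$ and $a\in G$, form a \emph{basis} for the topology on $\Sg(\cB)$. This is where the real work happens: one passes to $\cB^\sharp$, uses that $(\Sg(\cB^\sharp),\ast)$ is a profinite group (via Ellis--Nerurkar), finds an open subgroup $L$ squeezed between the stabilizer $K$ and a given open set, and then pulls $L$ back to a subgroup $H\leq G$ which is shown to lie in $\cB$ via the ``dual'' definability-of-types map $du^*$. Once the basis lemma is in hand, part $(c)$ is immediate: $\Sg_A(\cB)$ is a finite union of basic sets $\Sg_{g_iH_i}(\cB)$, take $H=\bigcap H_i$ and $Y=\bigcup g_iH_i$, and $\mu(A\smd Y)=0$ because $A\smd Y$ is not generic.

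So the missing idea is not a sharpening of your pigeonhole, but rather the construction of $H$ \emph{topologically} from the profinite structure of $\Sg(\cB^\sharp)$, with membership in $\cB$ coming from definability of types on the dual side.
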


Although we have focused on left-invariant Boolean algebras in previous theorem, note that if $\cB$ is a right-invariant Boolean algebra of stable sets in $G$ then, by applying Theorem \ref{thm:intro} to the left-invariant Boolean algebra $\cB\inv=\{A\inv:A\in\cB\}$, one can obtain analogous results for $\cB$. 
Theorem \ref{thm:intro} is modeled after  \cite[Theorem 2.3]{CPT}, which focuses on the case of a single $k$-stable invariant formula. The motivation in \cite{CPT} was to prove a ``stable arithmetic regularity lemma" for finite groups, which qualitatively generalized a combinatorial result of Terry and Wolf \cite{TeWo} on $\mathbb{F}_p^n$.

To prove Theorem \ref{thm:intro}, we apply various results from \cite{EllNer} (see Theorem \ref{thm:EN}) to the action of $G$ on the \emph{Stone space} $S(\cB)$ of ``types" (or ultrafilters) over the Boolean algebra $\cB$. We will use a combinatorial formulation of the  ``fundamental theorem of local stability theory" (Theorem \ref{thm:deftypes}) to show that every function in the Ellis semigroup of $S(\cB)$ is continuous, and so $S(\cB)$ is weakly almost periodic by a result of Ellis and Nerurkar \cite{EllNer}. Moreover, $S(\cB)$  has a unique minimal closed $G$-invariant subset, which is precisely the space $\Sg(\cB)$ of \emph{generic} types in $S(\cB)$ (i.e., types containing only generic sets in $\cB$). These results form the foundation for the proof of Theorem \ref{thm:intro}.

Note that Theorem \ref{thm:intro} includes the ``global" setting where $G$ is a group definable in a stable theory and $\cB$ is the Boolean algebra of definable subsets of $G$. It also includes the case where $\cB$ is the Boolean algebra of sets $A\seq G$ that are $k$-stable for some $k\geq 1$ (i.e., $xy\in A$ is $k$-stable with respect to the theory of $(G,A)$). The ``$k$-stable case" generalizes the local setting of Hrushovski and Pillay \cite{HrPiGLF}, which applies to the Boolean algebra generated by the instances of a single left-invariant stable formula in an ambient theory $T$. This is also the setting for the weaker version of Theorem \ref{thm:intro} from \cite{CPT} mentioned above.  Thus our results extend (and in some sense complete) the work  in \cite{CPT} on $k$-stable subsets of groups. In addition to Theorem \ref{thm:intro}, we also analyze local connected components, stabilizers of generic types, and measure-stabilizers of sets, along the lines of the results obtained in \cite{CPpfNIP} for ``$k$-NIP" sets in pseudofinite groups. Among other things, we prove: 

\begin{theorem}\label{thm:prelocal}
Let $G$ be a group, and suppose $\cB\seq\cP(G)$ is a left-invariant Boolean algebra. Suppose that every set in $\cB$ is stable in $G$, and $\cB$ contains a smallest finite-index subgroup of $G$, denoted $G^0_{\!\cB}$. Given $p\in S(\cB)$, set $\Stab(p)=\{g\in G:gp=p\}$. 
\begin{enumerate}[$(a)$]
\item There is an action-preserving bijection between $\Sg(\cB)$ and the set $G/G^0_{\!\cB}$ of left cosets of $G^0_{\!\cB}$, which sends $p$ to the unique left coset of $G^0_{\!\cB}$ in $p$.
\item If $p\in \Sg(\cB)$, and $aG^0_{\!\cB}\in p$ then $\Stab(p)=aG^0_{\!\cB}a\inv$.
\end{enumerate}
Moreover, if $\cB$ is bi-invariant then $G^0_{\cB}$ is normal, the map in part $(a)$ is a group isomorphism, and $\Stab(p)=G^0_{\!\cB}$ for any $p\in \Sg(\cB)$. 
\end{theorem}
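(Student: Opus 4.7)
The plan is to build the bijection $\Phi : \Sg(\cB) \to G/G^0_{\!\cB}$ by observing that $G^0_{\!\cB}$-cosets give a finite partition of $G$ by generic sets in $\cB$, and then to prove injectivity via Theorem~\ref{thm:intro}(c). Each left coset $aG^0_{\!\cB}$ lies in $\cB$ (being a translate of $G^0_{\!\cB}$) and is generic since finitely many translates cover $G$. Thus the cosets form a finite partition of $G$ by sets in $\cB$, so every $p\in S(\cB)$ contains a unique coset, which I take as $\Phi(p)$. The map $\Phi$ is $G$-equivariant by the definition of the $G$-action on types, since $X\in g\cdot p$ iff $g\inv X\in p$.

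The main step is injectivity. Fix $p,q\in\Sg(\cB)$ with $\Phi(p)=\Phi(q)$, and let $A\in\cB$ be arbitrary. Applying Theorem~\ref{thm:intro}(c) to $A$ yields a finite-index subgroup $H\in\cB$ and a union $Y$ of left cosets of $H$ with $\mu(A\smd Y)=0$. By minimality of $G^0_{\!\cB}$ inside $\cB$, we have $G^0_{\!\cB}\leq H$, so $Y$ is also a union of left cosets of $G^0_{\!\cB}$. By Theorem~\ref{thm:intro}(b), $A\smd Y$ is not generic, so neither $A\setminus Y$ nor $Y\setminus A$ lies in $p$ or $q$; a short ultrafilter calculation then gives $A\in p$ iff $Y\in p$ iff $\Phi(p)\seq Y$, and likewise for $q$. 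Since $Y$ depends only on $A$, we conclude $A\in p$ iff $A\in q$, forcing $p=q$. Surjectivity then follows from $\Sg(\cB)\neq\emptyset$ together with equivariance and transitivity of the $G$-action on $G/G^0_{\!\cB}$. For part $(b)$, injectivity gives $gp=p$ iff $\Phi(gp)=\Phi(p)$ iff $gaG^0_{\!\cB}=aG^0_{\!\cB}$ iff $g\in aG^0_{\!\cB}a\inv$.

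Finally, in the bi-invariant case, $aG^0_{\!\cB}a\inv$ is a finite-index subgroup lying in $\cB$, so minimality forces $G^0_{\!\cB}\leq aG^0_{\!\cB}a\inv$, with equality by index considerations; hence $G^0_{\!\cB}$ is normal and $\Stab(p)=G^0_{\!\cB}$ for all $p\in\Sg(\cB)$. The isomorphism claim reduces to transferring the quotient group structure from $G/G^0_{\!\cB}$ to $\Sg(\cB)$ via $\Phi$, and checking agreement with the intrinsic group structure on $\Sg(\cB)$ (coming, for instance, from the minimal-ideal structure of the Ellis semigroup in the WAP setting); both are pinned down by the transitive, free action of $G/G^0_{\!\cB}$. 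The one delicate ingredient is the injectivity step, which hinges on the fact that Theorem~\ref{thm:intro}(c) furnishes its finite-index subgroup $H$ \emph{within} $\cB$, so that minimality of $G^0_{\!\cB}$ can be applied to force $G^0_{\!\cB}\leq H$ and reduce everything to the partition by $G^0_{\!\cB}$-cosets.
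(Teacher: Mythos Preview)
Your argument is correct and takes a different route from the paper. The paper derives parts $(a)$ and $(b)$ from the general description of $\Sg(\cB)$ as a profinite homogeneous space (Corollary~\ref{cor:projlim}) together with the stabilizer computation in Corollary~\ref{cor:stabGl}; both of these rest on the basis theorem for $\Sg(\cB)$ (Proposition~\ref{prop:LItop}), which in turn goes through the Ellis--Nerurkar machinery for $E(S(\cB^\sharp))$. You instead take Theorem~\ref{thm:intro}(c) as a black box and use it to show that membership of any $A\in\cB$ in a generic type is determined by the unique $G^0_{\!\cB}$-coset in that type. This is a cleaner and more self-contained derivation \emph{once} Theorem~\ref{thm:intro} is available, and it nicely isolates exactly what is needed: the approximating subgroup $H$ lies in $\cB$, so $G^0_{\!\cB}\leq H$.

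One small point deserves tightening. In the bi-invariant case you assert that $\Phi$ is a group isomorphism because ``both group structures are pinned down by the transitive, free action of $G/G^0_{\!\cB}$''. A free transitive action only identifies $\Sg(\cB)$ as a \emph{torsor} over $G/G^0_{\!\cB}$; it does not by itself single out the identity, so it does not force $\Phi$ to be a homomorphism. The paper handles this via Lemma~\ref{lem:cosets}: when $H=G^0_{\!\cB}$ is normal, a direct computation with $dq(aH)=aHb^{-1}$ gives $(p\ast q)G^0_{\!\cB}=pG^0_{\!\cB}\cdot qG^0_{\!\cB}$, which is exactly $\Phi(p\ast q)=\Phi(p)\cdot\Phi(q)$. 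Alternatively, in your framework you can note that the $G$-action on $\Sg(\cB)$ is $g\cdot p=(gu)\ast p$ (from the $G$-semigroup identity $g(p\ast q)=(gp)\ast q$ applied with $p=u$), deduce $\Phi(u)=G^0_{\!\cB}$ from $u\ast u=u$, and then use equivariance and transitivity to conclude $\Phi$ is a homomorphism. Either way, a one-line check is needed here rather than an appeal to the torsor structure alone.
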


We also show that the previous theorem applies  when $\cB$ is defined from a single  left-invariant  stable relation or ``formula" (see Definitions \ref{def:RBA} and \ref{def:sif}). 

\begin{theorem}\label{thm:formula}
Let $G$ be a group, and suppose $\varphi(x,y)$ is a left-invariant stable relation on $G\times V$ for some set $V$. Let $\cB\seq\cP(G)$ be the Boolean algebra generated by all sets of the form $\{a\in G:\varphi(a,b)\}$ for all $b\in V$. Then every set in $\cB$ is stable in $G$, and $\cB$ contains a smallest finite-index subgroup of $G$.
\end{theorem}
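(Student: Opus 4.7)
The plan for part (1) is to reduce the stability of sets in $\cB$ to the stability of $\varphi$ itself. For a single instance $A_b = \{a \in G : \varphi(a,b)\}$, the bipartite relation $\psi(x,y) := (xy \in A_b)$ equals $\varphi(xy,b)$, and left-invariance of $\varphi$ provides a $G$-action on $V$ under which $\varphi(xy,b)$ rewrites as $\varphi(y, x^{-1} \cdot b)$ (the precise form depending on the convention of Definition \ref{def:sif}). Any infinite half-graph for $\psi$ therefore transfers to an infinite half-graph for $\varphi$ (after reversing the index order if needed), contradicting stability of $\varphi$. For a general Boolean combination $C = f(A_{b_1},\ldots,A_{b_n})$, suppose $(a_i)$ and $(c_j)$ were to witness $a_i c_j \in C \iff i \leq j$. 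The $\{0,1\}^n$-valued pattern $t(i,j) := (\chi_{A_{b_1}}(a_i c_j),\ldots,\chi_{A_{b_n}}(a_i c_j))$ takes only finitely many values, so by the infinite Ramsey theorem applied successively to pairs $i<j$, pairs $i>j$, and the diagonal $i=j$, we may pass to an infinite subsequence on which each $\chi_{A_{b_k}}(a_i c_j)$ depends only on the order relationship of $i,j$. Since $f$ must recover $\chi_C(a_i c_j) = [i\le j]$ from $t(i,j)$, some coordinate $k$ exhibits an infinite half-graph for the instance $A_{b_k}$, contradicting the instance case already established.

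For part (2), first note that the collection $\mathcal{H}$ of finite-index subgroups of $G$ lying in $\cB$ is closed under finite intersection, so it suffices to find a minimum element. Following the strategy of Hrushovski and Pillay in the classical local stable setting, I would apply the fundamental theorem of local stability (Theorem \ref{thm:deftypes}) to each generic type $p \in \Sg(\cB)$: this should yield that $\Stab(p) = \{g \in G : gp = p\}$ is itself $\cB$-definable, hence a member of $\mathcal{H}$. Taking $G^0_{\!\cB}$ to be the intersection of $\Stab(p)$ over all $p \in \Sg(\cB)$ would then give the desired subgroup, since any $H \in \mathcal{H}$ partitions $G$ into $\cB$-cosets, induces a clopen partition of $\Sg(\cB)$, and therefore must contain every $\Stab(p)$ for $p$ generic.

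The main obstacle is establishing the finiteness of $\Sg(\cB)$, which is needed to guarantee that the intersection defining $G^0_{\!\cB}$ is actually a finite intersection (so lies in $\cB$) of finite-index subgroups (so has finite index). In the classical $k$-stable setting this is automatic from a uniform VC bound depending only on $k$, but in the ``stable in a model'' framework of the present paper no such uniform $k$ is available, and finiteness must instead be derived from the hypothesis that $\cB$ is generated by instances of a \emph{single} stable relation $\varphi$. I expect the crux to be the following: each $p \in \Sg(\cB)$ is determined by a canonical $\varphi$-definable subset of $V$, and one must bound the number of such canonical definitions arising from generic types --- plausibly by combining the Ellis--Nerurkar compact-group structure on the minimal subflow $\Sg(\cB)$ with the fact that its clopen subsets are all traces of single $\varphi$-instances, so that any infinite descending chain of proper clopen subgroups would violate this single-formula origin. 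Verifying this is the principal technical step I would focus on.
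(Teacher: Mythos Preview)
Your argument for part~(1) is correct and is essentially what the paper means by ``straightforward''; the Ramsey reduction of a Boolean combination to a single instance is the standard route.

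For part~(2) there is a genuine gap. You correctly isolate the crux --- finiteness of $\Sg(\cB)$ --- but your proposed resolution is only a speculation: nothing about ``single-formula origin'' by itself rules out infinitely many clopen subgroups, and the mechanism you suggest is not an argument. Your stabilizer framework also has problems. For non-normal $H\leq^{\cB}_{\fin}G$ one only gets $\Stab(p)\seq aHa^{-1}$ where $pH=aH$ (Corollary~\ref{cor:stabGl}), so your claim that every such $H$ contains $\Stab(p)$ is false in general; and in fact $\bigcap_{p\in\Sg(\cB)}\Stab(p)=G^0_{\!\cB^\sharp}$ rather than $G^0_{\!\cB}$ (Theorem~\ref{thm:local}(3)), so even if the intersection were finite it would land in the wrong algebra.

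The paper's proof avoids all of this via Theorem~\ref{thm:summeas} (Gannon): since $\varphi$ is stable, the unique left-invariant measure $\mu$ on $\cB_\varphi$ decomposes as $\sum_n\alpha_n p_n$ with $p_n\in S(\cB_\varphi)$. Choosing $n$ with $\alpha_n>0$, every $H\leq^{\cB}_{\fin}G$ satisfies $\mu(p_nH)\geq\alpha_n$, whence $[G:H]\leq\alpha_n^{-1}$; a uniform bound on indices immediately yields a smallest finite-index subgroup in $\cB$. The remark following Theorem~\ref{thm:sif} also sketches an alternative closer in spirit to your outline: reduce by L\"owenheim--Skolem to countable $G$, observe that $p\mapsto d_\varphi p$ injects $S(\cB_\varphi)$ into the countable algebra $\cB^*_\varphi$, and conclude that $\Sg(\cB_\varphi)$ is finite because no countably infinite compact Hausdorff homogeneous space exists. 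Either of these is the missing ingredient in your proposal.
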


The paper is organized as follows. In Section \ref{sec:prelim}, we recall preliminaries from topological dynamics, as well as the combinatorial formulation of definability of types and symmetry of forking for stable formulas in terms of bipartite graphs. In Section \ref{sec:SBsemi}, we establish some initial results on the Ellis semigroup of a Stone space. 

Theorems \ref{thm:intro}, \ref{thm:prelocal}, and \ref{thm:formula} are proved in Sections \ref{sec:stable} and \ref{sec:CC}. For Theorem \ref{thm:intro}, part $(a)$ is Theorem \ref{thm:left-inv}$(e)$, part $(b)$ follows from Theorem \ref{thm:gensets}$(d)$, part $(c)$ is Corollary \ref{cor:structure}, and the final statement follows from Lemma \ref{lem:must}.  Theorem \ref{thm:prelocal} is an abridged version of Theorem \ref{thm:local}, and Theorem \ref{thm:formula} appears again as Theorem \ref{thm:sif}.

 Finally, in Section \ref{sec:SAC}, we make some remarks on additive combinatorics of stable subsets of groups.

\section{Preliminaries}\label{sec:prelim}

\subsection{Topological dynamics}\label{sec:TD}

Let $G$ be a (discrete) group. In this subsection, we briefly recall the material from topological dynamics necessary for our main results. 

\begin{definition}$~$
\begin{enumerate}
\item A \textbf{$G$-flow} is a nonempty compact Hausdorff space $S$ together with a (left) action of $G$ on $S$ by homeomorphisms.
\item Given a $G$-flow $S$, a \textbf{subflow} is a nonempty closed $G$-invariant subset of $S$.
\item A $G$-flow is \textbf{minimal} if it has no proper subflows.
\item  Let $S$ be a $G$-flow, and consider $S^S$ with the product topology.
\begin{enumerate}[$(i)$]
\item Given $a\in G$, define $\pi_a\colon S\to S$ such that $\pi_a(p)=ap$.
\item Define $E(S)$ to be the closure of $\{\pi_a:a\in G\}$ in $S^S$. 
\end{enumerate}
\end{enumerate}
\end{definition}

\begin{proposition}[Ellis]\label{prop:Ellis}
Let $S$ be a $G$-flow. Then $E(S)$ is a semigroup under composition of functions, and a $G$-flow under the action $(a,f)\mapsto \pi_a\circ f$.
\end{proposition}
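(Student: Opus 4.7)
The plan is a standard topological dynamics argument built on two continuity observations about the function space $S^S$ with the product topology, exploiting the fact that $G$ acts on $S$ by homeomorphisms.

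First I would record two facts. For each $a\in G$, the \emph{left-multiplication} map $L_a\colon S^S\to S^S$ given by $L_a(f)=\pi_a\circ f$ is continuous: evaluating at any $s\in S$ gives $\pi_a(f(s))$, which is the composition of the continuous evaluation-at-$s$ projection with the continuous self-map $\pi_a$ of $S$. Separately, for any fixed $f\in S^S$, the \emph{right-multiplication} map $R_f\colon S^S\to S^S$ given by $R_f(g)=g\circ f$ is continuous: evaluating at $s$ just projects to the coordinate $f(s)$. The first observation uses that $\pi_a$ is continuous (i.e., that $G$ acts by homeomorphisms); the second needs no hypothesis on $f$.

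With these in hand, I would establish that $E(S)$ is closed under composition in two stages. Stage one: for any $a\in G$ and $g\in E(S)$, write $g$ as a net limit $g=\lim_i \pi_{b_i}$; then continuity of $L_a$ gives $\pi_a\circ g=\lim_i \pi_{ab_i}$, which lies in $E(S)$. Stage two: for arbitrary $f,g\in E(S)$, write $f=\lim_j \pi_{a_j}$; then continuity of $R_g$ gives $f\circ g=\lim_j (\pi_{a_j}\circ g)$, and every $\pi_{a_j}\circ g$ belongs to $E(S)$ by stage one, so $f\circ g$ lies in the closed set $E(S)$. Associativity of the resulting binary operation is inherited from composition of functions, so $E(S)$ is a semigroup.

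For the $G$-flow claim, I would note that $S^S$ is compact Hausdorff as a product of copies of $S$, hence $E(S)$ is compact Hausdorff as a closed subspace. Stage one already shows that $(a,f)\mapsto \pi_a\circ f$ maps $G\times E(S)$ into $E(S)$; the identities $\pi_e\circ f=f$ and $\pi_a\circ(\pi_b\circ f)=\pi_{ab}\circ f$ (using $\pi_a\circ\pi_b=\pi_{ab}$) make this a group action; and continuity of $L_a$ together with the fact that $L_{a^{-1}}$ is its inverse shows that each $L_a$ restricts to a homeomorphism of $E(S)$. The only subtle point — not really an obstacle, but the reason one cannot argue in a single line — is that composition on $S^S$ is not \emph{jointly} continuous, so one must combine continuity of $L_a$ (which uses that $\pi_a$ is a homeomorphism of $S$) with continuity of $R_f$ for arbitrary $f\in S^S$ (which is automatic because it only permutes coordinates) in sequence.
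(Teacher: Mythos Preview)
Your argument is correct and is the standard proof of this fact: one uses continuity of $L_a$ (which requires $\pi_a$ to be continuous) and continuity of $R_f$ for arbitrary $f\in S^S$ (which is automatic in the product topology) to show in two steps that $E(S)$ is closed under composition, and then the $G$-flow structure follows routinely. The paper does not actually prove this proposition; it simply records it as an exercise and points to \cite[Proposition 3.2]{EllLTD} and \cite[Theorem 2.29]{HiStbook}, so your write-up supplies exactly the details the paper omits.
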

\begin{proof}
This is an exercise (or see, e.g., \cite[Proposition 3.2]{EllLTD}, \cite[Theorem 2.29]{HiStbook}).
\end{proof}

\begin{definition}
The \textbf{Ellis semigroup} of a $G$-flow $S$ is $(E(S),\circ)$. 
\end{definition}

\begin{definition}
Let $S$ be a $G$-flow and let $\cC(S)$ be the space of continuous complex-valued functions on $S$. A function $f\in\cC(S)$ is \textbf{weakly almost periodic} if the set $\{f^a:a\in G\}$ is relatively compact in the weak topology on $\cC(S)$ where, given $a\in G$ and $p\in S$, $f^a(p):=f(ap)$. The flow $S$ is \textbf{weakly almost periodic} if every $f\in \cC(S)$ is weakly almost periodic.
\end{definition}

Although the previous definition is rooted in functional analysis, we will not need to delve further into these underlying notions, thanks to the following theorem.

\begin{theorem}[Ellis-Nerurkar \cite{EllNer}]\label{thm:EN}
Let $S$ be a $G$-flow.
\begin{enumerate}[$(a)$]
\item $S$ is weakly almost periodic if and only if every $f\in E(S)$ is continuous.
\item If $S$ is weakly almost periodic then:
\begin{enumerate}[$(i)$]
\item $E(S)$ has a unique minimal subflow $C\seq E(S)$;
\item $C$ contains a unique idempotent $u$, and $f\circ u=u\circ f$ for any $f\in E(S)$;
\item $(C,\circ)$ is a compact group with identity $u$.
\item If $S$ has a unique minimal subflow then there is a unique $G$-invariant Borel probability measure on $S$ (i.e., $S$ is \textbf{uniquely ergodic}).
\end{enumerate}
\end{enumerate}
\end{theorem}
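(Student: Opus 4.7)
For part (a), my plan is to apply Grothendieck's ``double-limit'' theorem, which states that a bounded subset of $\mathcal{C}(K)$ (for compact Hausdorff $K$) is relatively weakly compact if and only if its pointwise closure in $\mathbb{C}^K$ lies in $\mathcal{C}(K)$. I would fix $f \in \mathcal{C}(S)$ and identify the pointwise closure of the orbit $\{f^a : a \in G\}$ in $\mathbb{C}^S$: whenever $\pi_{a_i} \to g$ in $E(S)$ (which occurs by compactness of $E(S)$ together with density of $\{\pi_a : a \in G\}$), we have $f^{a_i} = f \circ \pi_{a_i} \to f \circ g$ pointwise by continuity of $f$, and conversely every pointwise limit of the orbit arises this way. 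Hence the pointwise closure equals $\{f \circ g : g \in E(S)\}$, and Grothendieck's criterion rephrases the WAP condition as $f \circ g \in \mathcal{C}(S)$ for every $f \in \mathcal{C}(S)$ and every $g \in E(S)$. Since $\mathcal{C}(S)$ separates points from closed sets on the compact Hausdorff space $S$, this is equivalent to continuity of every $g \in E(S)$.

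For part (b), assume $S$ is WAP. Note that $E(S)$ is always right-topological (right translations $f \mapsto f \circ g$ are continuous via coordinate evaluation in the product topology on $S^S$), and by (a) the continuity of each $g \in E(S)$ now makes left translations $g \mapsto f \circ g$ continuous as well, so $E(S)$ is a compact Hausdorff semitopological semigroup. I would then invoke the classical structure theorem for such semigroups: the minimal two-sided ideal $C$ exists and is the unique minimal closed left (equivalently, right) ideal, is a compact topological group under composition, and contains a unique idempotent $u$ acting as its identity; the commutation $u \circ f = f \circ u$ for all $f \in E(S)$ follows from the fact that $u \circ f \circ u \in C$ is idempotent, forcing it to equal $u$ on both sides. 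For (iv), given a $G$-invariant Borel probability measure $\mu$ on $S$ and a net $\pi_{a_i} \to u$ in $E(S)$, invariance and bounded convergence yield $\int h \, d\mu = \int h \circ u \, d\mu$ for all $h \in \mathcal{C}(S)$, so $\mu$ equals its pushforward by $u$ and is therefore supported on $u(S)$. I would then verify that $u(S)$ is closed, $G$-invariant (via $\pi_a \circ u = u \circ \pi_a$), and acted upon transitively by the group $C$, hence a minimal subflow, necessarily equal to the assumed unique $M$. Identifying $M$ with a coset space of $C$, uniqueness of the invariant measure on $M$ reduces to uniqueness of Haar measure on the compact group $C$.

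The most delicate step, in my view, is the invocation of Grothendieck's theorem in (a): one must correctly identify the pointwise closure in $\mathbb{C}^S$ with $\{f \circ g : g \in E(S)\}$ and carefully navigate the passage between weak and pointwise topologies on bounded subsets of $\mathcal{C}(S)$. The semitopological structure theory supporting (b) is classical but technical, and what makes it applicable here is precisely the conclusion of (a), namely continuity of left multiplications in $E(S)$.
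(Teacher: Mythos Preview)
The paper does not actually prove this theorem: its ``proof'' is a one-line citation to Propositions II.2, II.5, and II.10 of Ellis--Nerurkar \cite{EllNer}. So there is no in-paper argument to compare against; what you have written is essentially a sketch of the original Ellis--Nerurkar proof (together with the Grothendieck input, which the paper also alludes to just after the statement).

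Your outline is sound. For part~$(a)$, the identification of the pointwise closure of $\{f^a:a\in G\}$ in $\mathbb{C}^S$ with $\{f\circ g:g\in E(S)\}$ is exactly right, and the passage from ``$f\circ g$ continuous for all $f\in\cC(S)$'' to ``$g$ continuous'' via Urysohn is clean. For part~$(b)$, reducing to the structure theory of compact \emph{semitopological} semigroups is the correct move, and your argument for $(iv)$ via pushing $\mu$ forward along $u$ and then invoking Haar on $C$ (viewing the unique minimal subflow as a homogeneous $C$-space) is the standard route.

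One small correction: your justification of $u\circ f=f\circ u$ is not quite right as stated. The element $u\circ f\circ u$ need not be idempotent in general. The clean argument is: since $C$ is a two-sided ideal, both $u\circ f$ and $f\circ u$ lie in $C$; as $u$ is the identity of the group $C$, we get $u\circ(f\circ u)=f\circ u$ and $(u\circ f)\circ u=u\circ f$, and these are equal by associativity. This is a minor slip and does not affect the overall strategy.
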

\begin{proof}
See Propositions II.2, II.5, and II.10 of \cite{EllNer}.
\end{proof}

\begin{definition}\label{def:genpoint}
Let $S$ be a $G$-flow.
\begin{enumerate}
\item A set $X\seq S$ is \textbf{generic} if $S=FX$ for some finite $F\seq G$.
\item A point $p\in S$ is \textbf{generic} if every open set containing $p$ is generic.
\end{enumerate}
\end{definition}

\begin{proposition}[Newelski \cite{NewTD}]\label{prop:New}
Let $S$ be a $G$-flow. The following are equivalent.
\begin{enumerate}[$(i)$]
\item $S$ has a unique minimal subflow.
\item There is a generic point in $S$.
\item The set of generic points in $S$ is the unique minimal subflow of $S$.
\item Every generic open set in $S$ contains a generic point.
\end{enumerate}
\end{proposition}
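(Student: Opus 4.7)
The proof would cycle through $(i) \Rightarrow (iii) \Rightarrow (iv) \Rightarrow (ii) \Rightarrow (i)$, relying on two preliminary facts: by Zorn's lemma every $G$-flow contains a minimal subflow, and if two minimal subflows meet then their intersection is a subflow, so by minimality they coincide. In particular, for any $q \in S$, the orbit closure $\overline{Gq}$ is a subflow and hence contains a minimal subflow.

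The central observation, used twice, is that any generic point lies in every minimal subflow. Suppose $p$ is generic and $M'$ is a minimal subflow with $p \notin M'$. Since $S$ is compact Hausdorff and $M'$ is closed, normality yields an open neighborhood $U$ of $p$ with $\overline{U} \cap M' = \emptyset$. Genericity gives a finite $F \seq G$ with $S = FU \seq F\overline{U}$, so $M' \seq F\overline{U}$. Writing $M' = \bigcup_{a \in F}(M' \cap a\overline{U})$ and using $G$-invariance of $M'$, some $a\inv M' \cap \overline{U} = M' \cap \overline{U}$ must be nonempty, contradicting the choice of $U$. Combined with the preliminary facts, this immediately yields $(ii) \Rightarrow (i)$ (any two minimal subflows share a common generic point, so they coincide) and the easy half of $(i) \Rightarrow (iii)$ (every generic point lies in the unique minimal subflow $M$).

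For the remaining half of $(i) \Rightarrow (iii)$, I would show that every $p \in M$ is generic. Let $U$ be an open neighborhood of $p$; for any $q \in S$, the orbit closure $\overline{Gq}$ contains a minimal subflow, which by uniqueness must equal $M$, so in particular $p \in \overline{Gq}$. Hence there is some $a \in G$ with $aq \in U$, i.e., $q \in a\inv U$. Thus $\{a\inv U : a \in G\}$ is an open cover of $S$, and compactness extracts a finite subcover witnessing $S = FU$, so $U$ is generic and therefore $p$ is a generic point.

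The remaining implications are quick. For $(iii) \Rightarrow (iv)$, given a generic open set $U$ write $S = FU$; then $M \seq FU$ forces $M \cap aU \neq \emptyset$ for some $a \in F$, and $G$-invariance of $M$ yields $M \cap U \neq \emptyset$, which by $(iii)$ is a generic point inside $U$. For $(iv) \Rightarrow (ii)$, apply $(iv)$ to the trivially generic open set $U = S = \{e\}\cdot S$. The main obstacle is the step showing that points of the unique minimal subflow are generic: this is the only place where the uniqueness hypothesis interacts essentially with the compactness of $S$, and everything else is bookkeeping around the central observation about generic points lying in minimal subflows.
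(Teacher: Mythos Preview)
Your proposal is correct. The paper's own proof simply cites Newelski's results (Lemma~1.7 and Corollary~1.9 of \cite{NewTD}) for the nontrivial implications, so your argument fills in precisely the standard details that the paper defers to the literature; the cycle you use, $(i)\Rightarrow(iii)\Rightarrow(iv)\Rightarrow(ii)\Rightarrow(i)$, differs slightly from the paper's $(i)\Rightarrow(ii)\Rightarrow(iii)\Rightarrow(iv)$ together with the trivial $(iii)\Rightarrow(i)$ and $(iv)\Rightarrow(ii)$, but the underlying ideas (genericity of points in a minimal subflow via compactness, and the central observation that a generic point lies in every minimal subflow) are the same.
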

\begin{proof}
$(i)\Rightarrow (ii)$ and $(iii)\Rightarrow (iv)$ follow from \cite[Lemma 1.7]{NewTD}. $(ii)\Rightarrow (iii)$ is \cite[Corollary 1.9]{NewTD}. $(iii)\Rightarrow (i)$ and $(iv)\Rightarrow (ii)$ are trivial. 
\end{proof}

\subsection{Stone spaces and generic types}

\begin{definition}\label{def:Stone}
Let $U$ be a set, and let $\cB\seq\cP(U)$ be a Boolean algebra. A subset $p\seq \cB$ is a \textbf{$\cB$-type} if $\emptyset\not\in p$, $p$ is closed under  finite intersections, and for any $A\in\cB$ either $A\in p$ or $U\backslash A\in p$. The \textbf{Stone space of $\cB$}, denoted $S(\cB)$, is the set of all $\cB$-types. Given $A\in\cB$, define $S_{\!A}(\cB):=\{p\in S(\cB):A\in p\}$. 
\end{definition}

In the setting of the previous definition, if $\cB=\cP(U)$ then $\cB$-type is also called an \emph{ultrafilter on $U$}. So one can think of $\cB$-types as ultrafilters relativized to an arbitrary Boolean algebra $\cB$. A ``trivial" example of a $\cB$-type is $p^{\cB}_a:=\{A\in\cB:a\in A\}$, where $a\in U$, which we call the \textbf{principal $\cB$-type on $a$}.

We now state some basic exercises, which justify our use of the word ``space" in Definition \ref{def:Stone}, and  connect Stone spaces over groups to topological dynamics. 

\begin{exercise}\label{exc:Stone}
Let $U$ be a set, and let $\cB\seq\cP(U)$ be a Boolean algebra.
\begin{enumerate}[$(a)$]
\item $S(\cB)$ is a totally disconnected compact Hausdorff space under the topology with basic open sets of the form $S_{\!A}(\cB)$ for all $A\in\cB$. 
\item A subset of $S(\cB)$ is clopen if and only if it is of the form $S_{\!A}(\cB)$ for some $A\in\cB$. 
\item The set $\{p^{\cB}_a:a\in U\}$ of principal $\cB$-types is dense in $S(\cB)$. 
\end{enumerate}
\end{exercise}

A standard fact is that a topological space is profinite (i.e., a projective limit of finite discrete spaces) if and only if it is compact, Hausdorff, and totally disconnected \cite[Theorem 2.1.3]{RZbook}. Thus the Stone space of a Boolean algebra is profinite. 

\begin{exercise}
 Let $G$ be a group, and suppose $\cB\seq\cP(G)$ is a left-invariant Boolean algebra. If $p\in S(\cB)$ and $g\in G$, then $gp:=\{gA:A\in p\}$ is in $S(\cB)$. Moreover, $S(\cB)$ is a $G$-flow under the action $(g,p)\mapsto gp$.
\end{exercise} 

Note that in the previous exercise, if $\cB$ is instead \emph{right-invariant}, then we have a corresponding right action $pg=\{Ag:g\in G\}$. Although our focus is on left-invariance, we will also use this right action on a few occasions.

Recall that in Definition \ref{def:genpoint}, we defined generic subsets and points in $G$-flows. The next definition defines genericity for subsets of groups, and the subsequent exercise explains the connection to  flows arising from Stone spaces.

\begin{definition}
A subset $A$ of a group $G$ is \textbf{generic} if $G$ can be covered by finitely many left translates of $A$, i.e., $G=FA$ for some finite $F\seq G$. 
\end{definition}

\begin{exercise}\label{exc:gen}
Let $G$ be a group, and let $\cB\seq\cP(G)$ be a left-invariant Boolean algebra. Then a set $A\in\cB$ is generic if and only if $S_{\!A}(\cB)$ is a generic subset of $S(\cB)$. Moreover, a type $p\in S(\cB)$ is generic if and only if every $A\in p$ is generic. 
\end{exercise}

\begin{definition}
Let $G$ be a group, and suppose $\cB\seq\cP(G)$ is a left-invariant Boolean algebra. Define $\Sg(\cB)$ to be the set of generic $\cB$-types. Given $A\in\cB$, define $\Sg_{\!A}(\cB):=\{p\in \Sg(\cB):A\in p\}$. 
\end{definition}

Let $G$ be a group, and let $\cB\seq\cP(G)$ be a left-invariant Boolean algebra. Then $\Sg(\cB)$ is a closed subset of $S(\cB)$ since, if $p\in S(\cB)$ is not generic then, by choosing some non-generic $A\in p$, we obtain an open neighborhood $S_{\!A}(\cB)$ of $p$ disjoint from $\Sg(\cB)$. In the subsequent work, we will focus on $\Sg(\cB)$ as a topological space in its own right. So we point out that $\Sg(\cB)$ is a profinite space with a basis consisting of the clopen sets $\Sg_{\!A}(\cB)$ for all $A\in\cB$ (note that $\Sg_{\!A}(\cB)=S_{\!A}(\cB)\cap\Sg(\cB)$).

It will be helpful for later results to have the following restatement of Proposition \ref{prop:New} in the setting of Stone spaces.

\begin{corollary}\label{cor:NewSB}
Let $G$ be a group and suppose $\cB$ is a left-invariant Boolean algebra of subsets of $G$. The following are equivalent.
\begin{enumerate}[$(i)$]
\item $S(\cB)$ has a unique minimal subflow.
\item $\Sg(\cB)$ is nonempty.
\item $\Sg(\cB)$ is the unique minimal subflow of $S(\cB)$.
\item If $A\in \cB$ is generic then $\Sg_{\!A}(\cB)$ is nonempty. 
\item If $\cF\seq\cB$ is closed under finite intersections and contains only generic sets, then there is some $p\in \Sg(\cB)$ such that $\cF\seq p$.
\end{enumerate}
\end{corollary}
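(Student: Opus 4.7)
My plan is to derive the corollary almost mechanically from Proposition \ref{prop:New}, using the dictionary provided by Exercise \ref{exc:gen}. By that exercise, a type $p\in S(\cB)$ is a generic point of the $G$-flow $S(\cB)$ (in the sense of Definition \ref{def:genpoint}) if and only if every $A\in p$ is generic in $G$, i.e., $p\in\Sg(\cB)$. So $\Sg(\cB)$ is precisely the set of generic points of $S(\cB)$, and the equivalences $(i)\Leftrightarrow(ii)\Leftrightarrow(iii)$ of the corollary are immediate translations of the corresponding implications in Proposition \ref{prop:New}.

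Next I would handle $(iv)$. One direction is easy: if $(iii)$ holds and $A\in\cB$ is generic, then by Exercise \ref{exc:gen} the clopen set $S_{\!A}(\cB)$ is a generic open subset of $S(\cB)$, and by Newelski's $(iv)$ it contains a generic point, i.e., $\Sg_{\!A}(\cB)\neq\emptyset$. For the converse, I plan to show that condition $(iv)$ of the corollary implies condition $(iv)$ of Proposition \ref{prop:New}. Given a generic open set $U\seq S(\cB)$, write $U=\bigcup_{i\in I}S_{\!A_i}(\cB)$. If $FU=S(\cB)$ for some finite $F\seq G$, then $\{S_{\!aA_i}(\cB):a\in F,\ i\in I\}$ covers $S(\cB)$, so by compactness finitely many of them suffice, say indexed by pairs $(a_k,i_k)$ for $k=1,\ldots,N$. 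Setting $A=A_{i_1}\cup\cdots\cup A_{i_N}\in\cB$, we get $\{a_1,\ldots,a_N\}A=G$, so $A$ is generic and $S_{\!A}(\cB)\seq U$; then $(iv)$ of the corollary supplies a generic point inside $S_{\!A}(\cB)\seq U$.

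Finally, $(iv)\Leftrightarrow(v)$ is a standard finite-intersection-property argument using that $\Sg(\cB)$ is closed in $S(\cB)$, hence compact. The implication $(v)\Rightarrow(iv)$ is immediate by taking $\cF=\{A\}$. For $(iv)\Rightarrow(v)$, given $\cF$ closed under finite intersections and consisting of generics, each $\Sg_{\!B}(\cB)=S_{\!B}(\cB)\cap\Sg(\cB)$ is a nonempty closed subset of $\Sg(\cB)$, and the equality $\Sg_{\!B_1\cap\cdots\cap B_n}(\cB)=\bigcap_k\Sg_{\!B_k}(\cB)$ (using that types are closed under intersection and upward closed) shows the family has the FIP, so compactness yields $p\in\bigcap_{B\in\cF}\Sg_{\!B}(\cB)$, which is the desired generic type.

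The only nonroutine step is the compactness reduction in the middle paragraph, going from an arbitrary generic open set to a basic generic clopen set $S_{\!A}(\cB)$; the rest is bookkeeping against Proposition \ref{prop:New} and Exercise \ref{exc:gen}.
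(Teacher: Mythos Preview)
Your proof is correct and follows essentially the same approach as the paper: translate $(i)$--$(iv)$ via Exercise~\ref{exc:gen} and Proposition~\ref{prop:New}, then handle $(iv)\Leftrightarrow(v)$ by a finite-intersection-property argument using compactness of $\Sg(\cB)$. The only difference is that your compactness reduction in the middle paragraph is unnecessary: since $G\in\cB$ is always generic, condition $(iv)$ of the corollary (with $A=G$) immediately gives $\Sg(\cB)\neq\emptyset$, i.e., $(ii)$, which closes the cycle of equivalences without ever needing to match Newelski's $(iv)$ for arbitrary open sets.
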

\begin{proof}
By Exercise \ref{exc:gen}, properties $(i)$ through $(iv)$ are translations of the corresponding properties in Proposition \ref{prop:New}. Clearly $(v)\Rightarrow(iv)$. For $(iv)\Rightarrow (v)$ note that  $\{\Sg_{\!A}(\cB):A\in\cF\}$ is a collection of nonempty closed subsets of $\Sg(\cB)$ with the finite intersection property. So the result follows from compactness of $\Sg(\cB)$. 
\end{proof}

\subsection{Stable relations}\label{sec:pre-stable}

Let $U$ and $V$ be fixed sets, and fix a binary relation $\varphi(x,y)$ on $U\times V$. Given $(a,b)\in U\times V$, we write $\varphi(a,b)$ to denote that the relation holds on $(a,b)$. If $b\in V$ then $\varphi(U,b)$ denotes the fiber $\{a\in U:\varphi(a,b)\}$. Dually, if $a\in U$ then $\varphi(a,V)=\{b\in V:\varphi(a,b)\}$. Combinatorially, one can view $\varphi(x,y)$ as a bipartite graph on $U\times V$, and the fibers of $\varphi(x,y)$ as edge neighborhoods. 

\begin{definition}\label{def:RBA}
$~$
\begin{enumerate}
\item Let $I$ be a linearly ordered set. Then $\varphi(x,y)$ \textbf{codes $I$} if there are sequences $(a_i)_{i\in I}$ in $U$ and $(b_i)_{i\in I}$ in $V$, such that $\varphi(a_i,b_j)$ if and only if $i\leq j$.
\item $\varphi(x,y)$ is \textbf{stable} if it does not code an infinite linear order.
\item Given $k\geq 1$, $\varphi(x,y)$ is \textbf{$k$-stable} if it does not code a linear order of size $k$.
\end{enumerate}
\end{definition}

\begin{remark}
In the model-theoretic setting, $U$ and $V$ would typically be sorts in some first-order structure $M$ (e.g., $U=V=M$), and $\varphi(x,y)$ would be a first-order formula. In this case, our definition of stability for $\varphi(x,y)$ is referred to as stability ``in a model". If $M$ is $\aleph_0$-saturated, then $\varphi(x,y)$ is stable (as defined above) if and only if it is $k$-stable for some $k\geq 1$, and this can be expressed as a first-order property of the theory of $M$. On the other hand, if $M$ is not $\aleph_0$-saturated, then there may be formulas that are stable but not $k$-stable for any $k\geq 1$. Thus stability in a model is more general than what is usually considered in model-theoretic literature. So we caution the reader  that when we use the word ``stable" in this paper, we will mean the weaker notion of stability in a model.
\end{remark}

Next we use the fibers of $\varphi(x,y)$ to define Boolean algebras on $U$ and $V$.

\begin{definition}\label{def:dphi}$~$
\begin{enumerate}
\item Define $\cB_\varphi$ to be the Boolean algebra  generated by $\{\varphi(U,b):b\in V\}$.
\item Define $\cB^*_\varphi$ to be the Boolean algebra generated by $\{\varphi(a,V):a\in U\}$.
\item Given a type $p\in S(\cB_\varphi)$, set $d_\varphi p = \{b\in V:\varphi(U,b)\in p\}$.
\item Given a type $p\in S(\cB^*_\varphi)$, set $d^*_\varphi p =\{a\in U:\varphi(a,V)\in p\}$. 
\end{enumerate}
\end{definition}

Note that if $p\in S(\cB_\varphi)$, then the set $d_\varphi p$ uniquely determines $p$ by definition of $\cB_\varphi$.
This notation draws from the model-theoretic notion of type definitions.  One of the fundamental results of model-theoretic stability theory is that, when $\varphi(x,y)$ is stable, the sets $d_\varphi p$ are themselves ``definable" using $\varphi(x,y)$. We now state this result precisely, along with a second important fact related to the model-theoretic notion of ``forking" (which we will not need to discuss here).

\begin{theorem}\label{thm:deftypes}
Suppose $\varphi(x,y)$ is stable, and fix $p\in S(\cB_\varphi)$ and $q\in S(\cB^*_{\varphi})$.
\begin{enumerate}[$(a)$]
\item \textnormal{(definability of types)} $d_\varphi p\in \cB^*_{\varphi}$ and $d^*_{\varphi}q\in\cB_{\varphi}$.
\item \textnormal{(symmetry of forking)} $d_\varphi p\in q$ if and only if $d^*_{\varphi}q\in p$.
\end{enumerate}
\end{theorem}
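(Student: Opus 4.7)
This is the classical fundamental theorem of local stability theory, presented here in combinatorial (bipartite-graph) form rather than model-theoretic language. There are two standard routes: a direct Shelah-style argument by contradiction using the half-graph coding, and Ben Yaacov's approach via Grothendieck's double-limit characterization of relative weak compactness. Given the bipartite formulation already in place, I would take the direct combinatorial route, proving (a) first and then deducing (b) from it.

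For part (a), it suffices by duality to prove $d_\varphi p\in\cB^*_\varphi$, since applying the same argument to the dual relation $\varphi^*(y,x):=\varphi(x,y)$ (which is stable whenever $\varphi$ is) yields $d^*_\varphi q\in\cB_\varphi$. I would argue by contradiction: assume $d_\varphi p\notin\cB^*_\varphi$. Since $\cB^*_\varphi$ is the Boolean algebra generated by $\{\varphi(a,V):a\in U\}$, membership in $\cB^*_\varphi$ is equivalent to being determined by the pattern of finitely many such fibers. So the assumption says: for every finite $F=\{a_1,\dots,a_n\}\seq U$ and every Boolean combination $\Phi$, there exists $b\in V$ with $\chi_{d_\varphi p}(b)\neq\Phi(\chi_{\varphi(a_1,V)}(b),\dots,\chi_{\varphi(a_n,V)}(b))$. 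Using this, I would inductively construct sequences $(a_i)$ in $U$ and $(b_i)$ in $V$ with $\varphi(a_i,b_j)\Leftrightarrow i\leq j$, contradicting stability. At stage $n$: use the non-definability hypothesis, applied to the current finite witness set $\{a_1,\dots,a_{n-1}\}$, to pick $b_n\in V$ having prescribed $\varphi$-relations with $a_1,\dots,a_{n-1}$ while $b_n\notin d_\varphi p$ (so that $U\setminus\varphi(U,b_n)\in p$); then use that $p$ is closed under finite intersection and contains the appropriate $\varphi(U,b_j)^{\epsilon_j}$'s to pick $a_n$ realizing the desired half-graph pattern against $b_1,\dots,b_n$.

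For part (b), given (a), write $d_\varphi p$ as a Boolean combination $\Phi(\varphi(a_1,V),\dots,\varphi(a_n,V))$ and $d^*_\varphi q$ as $\Psi(\varphi(U,b_1),\dots,\varphi(U,b_m))$. Since $q$ is a type, $d_\varphi p\in q$ is equivalent to $\Phi$ evaluated on the truth values $[\varphi(a_i,V)\in q]=[a_i\in d^*_\varphi q]$ yielding $1$; symmetrically $d^*_\varphi q\in p$ unpacks to $\Psi$ evaluated on $[b_j\in d_\varphi p]$. The equivalence is trivial for principal types, where both sides collapse to $\varphi(a,b)$. For the general case, the cleanest strategy is to observe that by choosing the witnesses $a_i\in U$ from an appropriate finite intersection lying inside $p$ (and dually for $b_j$'s in $q$), the Boolean definitions $\Phi,\Psi$ can be taken in a canonical symmetric form (e.g.\ a threshold/majority condition), from which the equivalence is immediate. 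Alternatively, one can use density of principal types in $S(\cB_\varphi)$ and $S(\cB^*_\varphi)$ together with continuity of the bijection between $\cB^*_\varphi$-clopens and $\cB_\varphi$-clopens induced by $\varphi$, reducing to the already-verified principal case.

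The main obstacle is the inductive construction in (a): maintaining the half-graph pattern requires simultaneously controlling finitely many open conditions coming from $p$ (which constrain the next $a_n$) and finitely many conditions coming from the non-definability hypothesis (which constrain the next $b_n$). The key technical point is to alternate the construction so that, at each stage, the failure of definability of $d_\varphi p$ over the finitely many $a_i$'s chosen so far provides a fresh $b_n$ with precisely the truth values needed, and that the type $p$ is rich enough to supply an $a_n$ matching the desired pattern against all previously chosen $b_j$'s.
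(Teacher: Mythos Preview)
The paper does not prove this theorem itself; it cites Ben~Yaacov's Grothendieck-based argument \cite{BYGro} (and Pillay \cite{PilDTH}) for the general stable-in-a-model case, and notes that Shelah's direct argument applies only in the $k$-stable case. Your proposal is essentially the Shelah route, so let me focus on whether it actually goes through here.

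There is a genuine gap in your construction for part~$(a)$. At stage $n$ you claim to pick $b_n$ ``having prescribed $\varphi$-relations with $a_1,\dots,a_{n-1}$ while $b_n\notin d_\varphi p$.'' But the non-definability hypothesis does not give you this. Saying that $d_\varphi p$ is not a Boolean combination of $\varphi(a_1,V),\dots,\varphi(a_{n-1},V)$ only says that \emph{some} atom of the finite Boolean algebra they generate is split by $d_\varphi p$; it does not let you choose \emph{which} atom. In other words, you get $b,b'\in V$ with the same (unspecified) pattern $(\varphi(a_i,b))_{i<n}=(\varphi(a_i,b'))_{i<n}$ and with exactly one of $b,b'$ in $d_\varphi p$, but you cannot demand that pattern be the one your half-graph needs. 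Since every later $a_n$ you choose from $p$ will satisfy $\varphi(a_n,b_j)\Leftrightarrow b_j\in d_\varphi p$, the relations $\varphi(a_i,b_j)$ for $i<j$ remain uncontrolled, and the naive induction does not produce a half-graph.

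The gap is repairable, but nontrivially: at each stage record \emph{both} witnesses $b_n^0\notin d_\varphi p$ and $b_n^1\in d_\varphi p$ (agreeing on the earlier $a_i$'s), then choose $a_n$ from $p$. For $i\geq j$ one has $\varphi(a_i,b_j^\epsilon)\Leftrightarrow\epsilon=1$, while for $i<j$ the common value $\varphi(a_i,b_j^\epsilon)$ is the uncontrolled pattern bit. Apply Ramsey to the colouring $(i,j)\mapsto\varphi(a_i,b_j^0)$ on pairs $i<j$; on the resulting infinite homogeneous set, one of the two sequences $(b_j^0)$ or $(b_j^1)$ yields an infinite half-graph against $(a_i)$. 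This extra Ramsey step is precisely what distinguishes the stable-in-a-model case from the $k$-stable case, and it is missing from your sketch.

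For part~$(b)$, the ``canonical threshold/majority'' form of the definition is again a $k$-stable phenomenon (it comes out of Shelah's rank argument) and need not hold in the general setting. Your alternative via density of principal types and ``continuity'' is on the right track, but as stated it is essentially an invocation of the Grothendieck double-limit interchange: the map $(p,q)\mapsto[d_\varphi p\in q]$ is separately continuous, agrees with $(p,q)\mapsto[d^*_\varphi q\in p]$ on pairs of principal types, and stability is what lets you pass to the limit in both variables. This is exactly the route the paper points to (via \cite{StarGro}).
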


In the model-theoretic context, part $(a)$ of Theorem \ref{thm:deftypes}  is evident from work of Pillay \cite{PilDTH}. However, in \cite{BYGro}, Ben Yaacov proves Theorem \ref{thm:deftypes}$(a)$ as a corollary of Grothendieck's  characterization in \cite{GroWAP} of relatively weakly compact sets in the Banach space of bounded continuous functions on some fixed topological space. See also \cite{PilGro} and \cite{StarGro} for expositions of this result. Part $(b)$ of Theorem \ref{thm:deftypes} follows easily from the Grothendieck approach to part $(a)$ (see \cite{StarGro}). It is also worth noting that Grothendieck's work is a key ingredient in the proof of Theorem \ref{thm:EN}$(a)$. 

When $\varphi(x,y)$ is $k$-stable for some $k\geq 1$,  part $(a)$ of Theorem \ref{thm:deftypes} was proved by Shelah (see \cite[Theorem II.2.2]{Shbook}) and, given part $(a)$, part $(b)$ is not hard to prove directly (see, e.g., \cite[Lemma 5.7]{HrPiGLF}).

We have now reviewed all of the preliminaries needed to prove Theorems \ref{thm:intro} and \ref{thm:prelocal}. For Theorem \ref{thm:formula}, we will also need a result on measures. Given a Boolean algebra $\cB$ (on some set $U$), let $M(\cB)$ denote the compact Hausdorff space of \textbf{probability measures} on $\cB$ (i.e., finitely additive functions $\mu\colon\cB\to[0,1]$ with $\mu(U)=1$), with the subspace topology from $[0,1]^{\cB}$. We may view $S(\cB)$ as a closed set in $M(\cB)$ by identifying $p\in S(\cB)$ with a $\{0,1\}$-valued measure. A well-known result of Keisler \cite{Keis} is that if $\varphi(x,y)$ is $k$-stable, then any $\mu\in M(\cB_\varphi)$ is a weighted sum of countably many $\cB_\varphi$-types  (see also \cite[Fact 1.1]{PiDR}). In \cite{GanStab}, Gannon uses Theorem \ref{thm:deftypes}$(a)$, together with the Sobczyk-Hammer Decomposition Theorem  from measure theory, to  generalize this to the case that $\varphi(x,y)$ is stable. 

\begin{theorem}\label{thm:summeas}
Suppose $\varphi(x,y)$ is stable and $\mu\in M(\cB_\varphi)$. Then there are $p_n\in S(\cB_\varphi)$ and $\alpha_n\in [0,1]$, for $n\in\N$, such that $\mu=\sum_{n=0}^\infty\alpha_n p_n$.
\end{theorem}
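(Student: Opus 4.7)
The plan is to apply the Sobczyk-Hammer Decomposition Theorem to obtain a canonical splitting $\mu = \mu_d + \mu_c$, where $\mu_d = \sum_{n=0}^{\infty} \alpha_n p_n$ is a countable convex combination of distinct types $p_n \in S(\cB_\varphi)$ with $\alpha_n \geq 0$, and $\mu_c$ is a \emph{continuous} finitely additive measure, meaning no $\cB_\varphi$-type is dominated by a positive multiple of $\mu_c$. A standard calculation shows that continuity of $\mu_c$ is equivalent to the following splitting property: for every $A \in \cB_\varphi$ with $\mu_c(A) > 0$ there is $B \in \cB_\varphi$ with $0 < \mu_c(A \cap B) < \mu_c(A)$. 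Indeed, if no such $B$ exists, then $p := \{B \in \cB_\varphi : \mu_c(A \cap B) = \mu_c(A)\}$ is a $\cB_\varphi$-type dominated by $\mu_c/\mu_c(A)$. The theorem thus reduces to showing that stability of $\varphi$ forces $\mu_c \equiv 0$.

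Suppose toward contradiction that $\mu_c(U) > 0$. Iterating the splitting property, one builds a full binary tree $(A_s)_{s \in 2^{<\omega}}$ of sets in $\cB_\varphi$ with $A_\emptyset = U$, $A_s = A_{s0} \sqcup A_{s1}$, and $\mu_c(A_s) > 0$ for every $s$. Each $A_s$ is a Boolean combination of finitely many fibers $\varphi(U, b^s_j)$, so the tree is encoded by a countable set $B_0 \seq V$, and each branch $\sigma \in 2^\omega$ produces a distinct $\cB_\varphi$-type $p_\sigma$ concentrating on $\bigcap_n A_{\sigma\restriction n}$. The next step is to use definability of types (Theorem \ref{thm:deftypes}(a)) to derive a contradiction from this profusion of types: by interleaving the parameters $b^s_j$ with carefully chosen witnesses $a \in A_s$, one aims to extract an infinite half-graph pattern $\varphi(a_i, b_j) \Leftrightarrow i \leq j$, contradicting stability.

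The main obstacle is precisely this last extraction step -- translating the measure-theoretic splitting tree into an explicit coding of an infinite linear order. A cleaner route, which I expect is what Gannon actually follows in \cite{GanStab}, is to extend $\mu_c$ uniquely to a non-atomic Radon probability measure on the Stone space $S(\cB_\varphi)$ and invoke the Grothendieck-theoretic content behind Theorem \ref{thm:deftypes}(a) to show that stability forces $S(\cB_\varphi)$ to support only purely atomic Borel probability measures (i.e., to be scattered, or sufficiently close to it). In this topological formulation, the contradiction with continuity is immediate, bypassing any need to pin down a concrete combinatorial half-graph.
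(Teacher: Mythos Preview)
The paper does not prove this theorem itself but attributes it to Gannon \cite{GanStab}, describing the argument as Sobczyk--Hammer together with Theorem~\ref{thm:deftypes}(a) and a L\"owenheim--Skolem reduction to countable structures (see the paragraph before the theorem and the remark following Theorem~\ref{thm:sif}). Your Sobczyk--Hammer opening and the reduction to $\mu_c\equiv 0$ are exactly right and match what the paper reports.

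The gap you flag is real, but the proposed fix---extracting a half-graph from the binary tree---will not work. A full binary tree of nonempty $\varphi$-definable sets does not by itself contradict stability of $\varphi$; it witnesses a failure of total transcendence, which is strictly stronger. A stable relation on uncountable $U,V$ can perfectly well carry such a tree and have $2^{\aleph_0}$ types in $S(\cB_\varphi)$. There is no half-graph hidden in the tree waiting to be read off.

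The missing idea is a \emph{counting} argument via definability of types, after restricting to countable parameter sets on \emph{both} sides. Your countable $B_0\seq V$ encodes the tree; now also choose a countable $A_0\seq U$ meeting every $A_s$, and set $\psi=\varphi|_{A_0\times B_0}$, which remains stable. The tree restricts to a full binary tree of nonempty sets in $\cB_\psi$, so $|S(\cB_\psi)|\geq 2^{\aleph_0}$. But Theorem~\ref{thm:deftypes}(a) gives an injection $p\mapsto d_\psi p$ from $S(\cB_\psi)$ into $\cB^*_\psi$, and $\cB^*_\psi$ is countable since it is generated by $\{\psi(a,B_0):a\in A_0\}$. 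This is the contradiction. The L\"owenheim--Skolem step the paper mentions is precisely this passage to a countable substructure, and definability of types is used not to manufacture a half-graph but to bound the cardinality of the type space (the paper spells this out in the remark after Theorem~\ref{thm:sif}). Your ``topological'' alternative lands in the same place: after the countable reduction, $S(\cB_\psi)$ is a countable compact Hausdorff space, hence scattered, hence supports no atomless Borel measure.
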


\section{Semigroups on Stone spaces}\label{sec:SBsemi}
Throughout this section, we let $G$ be a fixed group. 
The goal of this section is to formulate assumptions on a left-invariant Boolean algebra $\cB\seq\cP(G)$ under which $E(S(\cB))$ is isomorphic to a more natural semigroup. So let us first precisely define the kind of semigroup we are interested in.

\begin{definition}
A \textbf{$G$-semigroup} is a $G$-flow $S$ equipped with a semigroup operation $\cdot$, so that  if $g\in G$ and $p,q\in S$ then $g(p\cdot q)=(gp)\cdot q$.
\end{definition}

\begin{example}
If $S$ is a $G$-flow then $(E(S),\circ)$ is a $G$-semigroup by Proposition \ref{prop:Ellis}.
\end{example}

Our aim is to give a more natural description of $E(S(\cB))$ as a $G$-semigroup, under certain assumptions on $\cB$. To motivate these assumptions, we first recall a well-known example of a Stone space with a canonical $G$-semigroup structure.  

\begin{example}\label{ex:StoneCech} 
The Stone space $S(\mathcal{P}(G))$  is denoted $\beta G$ and is called the \emph{Stone-\v{C}ech compactification} of $G$. Given $p,q\in \beta G$, set
\[
p \ast q=\big\{A\seq G:\{x\in G:x\inv A\in q\}\in p\big\}.
\]
Then $\ast$ is a well-defined semigroup operation on $\beta G$ and, moreover, $(\beta G,\ast)$ is a $G$-semigroup under the usual action of $G$. We will prove a generalization of this fact in Proposition \ref{prop:SBsemi} below. See also \cite[Section 4.1]{HiStbook} for further details. 
\end{example}

Toward adapting the previous example to more general situations, let us first redefine $\ast$ for an arbitrary left-invariant Boolean algebra.

\begin{definition}\label{def:dpA}
Let $\cB\seq\cP(G)$ be a left-invariant Boolean algebra. 
\begin{enumerate}
\item Given $p\in S(\cB)$ and $A\in\cB$, define $dp(A)=\{x\in G:x\inv A\in p\}$. 
\item Given $p,q\in S(\cB)$, define $p\ast q=\{A\in\cB:dq(A)\in p\}$. 
\end{enumerate}
\end{definition}

The set $dp(A)$ above can be connected to the ``type definitions" discussed in Section \ref{sec:pre-stable}. In particular, let $\cB\seq\cP(G)$ be the Boolean algebra generated by the left translates of a fixed set $A\seq G$, and let $\varphi(x,y)$ be the relation $x\in y\inv A$ on $G\times G$. 
Then $\cB=\cB_{\varphi}$ and, given $p\in S(\cB)$, we have $d_\varphi p=dp(A)$. 

 \begin{remark}\label{rem:BAhom}
In the context of Definition \ref{def:dpA}, we view $dp$ as a map $dp\colon \cB\to\cP(G)$. As a good warm-up exercise, the reader should verify that $dp$ is a left-invariant homomorphism of Boolean algebras.
\end{remark}

Note that if $\cB\seq\cP(G)$ is a left-invariant Boolean algebra, and $p,q\in S(\cB)$, then $p\ast q$ is only defined to be a \emph{subset} of $\cP(\cB)$. There is no reason to expect that $p\ast q$ is a $\cB$-type, let alone that $\ast$ determines a semigroup operation on $S(\cB)$. Indeed, given $A\in\cB$, the set $dq(A)$ may not even be in $\cB$. However, if we impose this assumption, then we can recover a semigroup structure on $S(\cB)$ as in the case of $\beta G$.

\begin{proposition}\label{prop:SBsemi}
Let $\cB\seq\cP(G)$ be a left-invariant Boolean algebra such that $dp(A)\in\cB$ for all $p\in S(\cB)$ and $A\in\cB$. Then $(S(\cB),\ast)$ is a $G$-semigroup.
\end{proposition}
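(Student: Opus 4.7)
The plan is to leverage Remark \ref{rem:BAhom} as the backbone: once one knows that $dq \colon \cB\to\cP(G)$ is a left-invariant Boolean algebra homomorphism, the extra hypothesis of the proposition upgrades this to a left-invariant homomorphism $dq\colon\cB\to\cB$. Then $p\ast q$ is literally the preimage $dq\inv(p)$, and essentially everything follows from the general fact that preimages of ultrafilters under Boolean algebra homomorphisms are ultrafilters, combined with two ``commutation'' identities for the operator $d$.

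First I would check that $p\ast q\in S(\cB)$. Since $dq$ is a Boolean algebra homomorphism, it satisfies $dq(\emptyset)=\emptyset$, $dq(A\cap B)=dq(A)\cap dq(B)$, and $dq(G\setminus A)=G\setminus dq(A)$, so the three defining properties of a $\cB$-type transfer from $p$ to $p\ast q=\{A\in\cB:dq(A)\in p\}$ directly. (Here it is crucial that $dq(A)\in\cB$, which is exactly the standing hypothesis.)

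Next I would handle associativity. The computation reduces to the identity
\[
d(q\ast r)(A)=dq(dr(A))\qquad\text{for all }A\in\cB,
\]
because both $(p\ast q)\ast r$ and $p\ast(q\ast r)$ contain $A$ precisely when the corresponding left side lies in $p$. To prove this identity I would unwind the definitions and note that
\[
x\inv dr(A)=dr(x\inv A)
\]
for every $x\in G$, which is just the left-invariance part of Remark \ref{rem:BAhom} applied to $dr$. Then $x\in d(q\ast r)(A)$ iff $dr(x\inv A)=x\inv dr(A)\in q$ iff $x\in dq(dr(A))$, giving associativity.

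Finally, for the $G$-semigroup property $g(p\ast q)=(gp)\ast q$, I would again use left-invariance of $dq$, this time in the form $dq(gA)=g\,dq(A)$. Then $gA\in g(p\ast q)$ iff $dq(A)\in p$, while $gA\in(gp)\ast q$ iff $g\inv dq(gA)=dq(A)\in p$, so the two sets coincide. (The action $(g,p)\mapsto gp$ on $S(\cB)$ is already by homeomorphisms by the earlier exercise, so no further work on the flow structure is needed.) I do not expect any real obstacle here: the one step requiring genuine care is the associativity identity $d(q\ast r)=dq\circ dr$, since it is easy to get the order of the substitutions backwards, but it falls out cleanly from the left-invariance of $d$.
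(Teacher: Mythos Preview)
Your proposal is correct and follows essentially the same approach as the paper: both arguments use Remark \ref{rem:BAhom} to verify that $p\ast q$ is a $\cB$-type, establish associativity via the identity $d(q\ast r)(A)=dq(dr(A))$ (derived from $dr(x\inv A)=x\inv dr(A)$), and deduce $g(p\ast q)=(gp)\ast q$ from $dq(gA)=g\,dq(A)$. Your framing of $p\ast q$ as the preimage $dq\inv(p)$ under a Boolean algebra homomorphism is a nice way to package the first step, but otherwise the content is identical.
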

\begin{proof}
Given our assumptions on $\cB$, it follows easily from Remark \ref{rem:BAhom} that $S(\cB)$ is closed under $\ast$. To prove associativity, fix $p,q,r\in S(\cB)$ and $A\in\cB$. Then
\begin{multline*}
d(q\ast r)(A)=\{x\in G:x\inv A\in q\ast r\}=\{x\in G:dr(x\inv A)\in q\}\\
=\{x\in G:x\inv dr(A)\in q\}=dq(dr(A)).
\end{multline*}
Therefore
\begin{multline*}
A\in p\ast (q\ast r)\miff d(q\ast r)(A)\in p\miff dq(dr(A))\in p\\
\miff dr(A)\in p\ast q\miff A\in (p\ast q)\ast r.
\end{multline*}
Finally, given $p,q\in S(\cB)$ and $g\in G$, we have $g(p\ast q)=gp\ast q$ by similar calculations (in particular, the fact that $dq(g\inv A)=g\inv dq(A)$ for any $A\in\cB$). 
\end{proof}

We will soon see that in the setting of the previous proposition, $(S(\cB),\ast)$ and $(E(S(\cB)),\circ)$ are isomorphic as $G$-semigroups.
So a natural question is how easily one can find Boolean algebras satisfying the hypotheses of this result. In light of the discussion after Definition \ref{def:dpA}, Theorem \ref{thm:deftypes}$(a)$ looks promising for the case of Boolean algebras of \emph{stable} sets (see Definition \ref{def:stableG}), which will be our main focus. However, there is one concrete obstacle. In particular, suppose $\cB\seq\cP(G)$ is a left-invariant Boolean algebra. Given $A\in\cB$ and $g\in G$, if $p=p^{\cB}_{g}$ is the principal $\cB$-type on $g$, then $dp(A)=Ag\inv$. Therefore, any left-invariant Boolean algebra satisfying the hypotheses of Proposition \ref{prop:SBsemi} is automatically bi-invariant. Let us record this conclusion, along with some other basic observations (left to the reader).

\begin{proposition}\label{prop:pggp}
Let $G$ be a group, and suppose $\cB\seq\cP(G)$ is a left-invariant Boolean algebra such that $dp(A)\in\cB$ for all $p\in S(\cB)$ and $A\in\cB$. Then $\cB$ is bi-invariant. Moreover, if $p\in S(\cB)$ and $g\in G$, then $p^{\cB}_g\ast p=gp$ and $p\ast p^{\cB}_g=pg$.
\end{proposition}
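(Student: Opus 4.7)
The plan is to establish everything by unwinding definitions, in three short moves, using only the formulas in Definition \ref{def:dpA} and the description of the left (and, once we have it, right) action on $S(\cB)$.

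First, I would compute $dp(A)$ explicitly for a \emph{principal} type. For $p = p^{\cB}_g$ and any $A \in \cB$, the chain $x \in dp^{\cB}_g(A)$ iff $x\inv A \in p^{\cB}_g$ iff $g \in x\inv A$ iff $x \in Ag\inv$ gives $dp^{\cB}_g(A) = Ag\inv$. The standing hypothesis that $dp(A) \in \cB$ for every $p \in S(\cB)$ and $A \in \cB$ then forces $Ag\inv \in \cB$ for every $A \in \cB$ and $g \in G$, so together with left-invariance, $\cB$ is bi-invariant. In particular, the right action $p \mapsto pg$ on $S(\cB)$ is well-defined and satisfies $A \in pg$ iff $Ag\inv \in p$.

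Next, for $p^{\cB}_g \ast p = gp$, I would just unfold: $A \in p^{\cB}_g \ast p$ iff $dp(A) \in p^{\cB}_g$ iff $g \in dp(A)$ iff $g\inv A \in p$ iff $A \in gp$. For $p \ast p^{\cB}_g = pg$, I would combine the step-one computation with the definition of $\ast$: $A \in p \ast p^{\cB}_g$ iff $dp^{\cB}_g(A) \in p$ iff $Ag\inv \in p$ iff $A \in pg$.

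I do not anticipate any genuine obstacle here; this is pure definition-chasing, which is presumably why the proposition is flagged as a reader exercise. The only subtle point worth highlighting is that the right-hand side $pg$ of the second identity is a priori only meaningful once $\cB$ is known to be right-invariant, and the first step is precisely what certifies this, so logically the bi-invariance claim must be dispatched before the second identity is even stated coherently.
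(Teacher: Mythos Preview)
Your proposal is correct and matches the paper's approach: the paper explicitly computes $dp^{\cB}_g(A)=Ag\inv$ in the paragraph preceding the proposition to deduce bi-invariance, and then leaves the remaining identities to the reader, which is exactly the definition-chasing you carry out. Your observation that bi-invariance must be established first so that $pg$ is well-defined is apt and implicit in the paper's ordering.
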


Our task now is to find a weaker version of the assumption in Proposition \ref{prop:SBsemi}, which does not force $\cB$ to  be bi-invariant, but still leads to control of $E(S(\cB))$. To motivate this investigation, we first consider an example.

\begin{example}\label{ex:easy}
Fix a finite-index subgroup $H\leq G$, and let $\cB$ be the Boolean algebra generated by all left cosets of $H$. Then a subset of $G$ is in $\cB$ if and only if it is a union of such cosets. Also, $\cB$ is bi-invariant if and only if $H$ is normal. Indeed, if $H$ is not normal then some conjugate $aHa\inv$ does not contain $H$. So $aHa\inv$ is not in $\cB$ since any set in $\cB$ containing the identity must contain $H$. 

Now consider the Stone space $S(\cB)$. From basic properties of types, one can see that any $p\in S(\cB)$ contains a unique left coset of $H$, which completely determines $p$. So $S(\cB)$ is in bijection with the set $X$ of left cosets of $H$. 
Since $S(\cB)$ is finite, $E(S(\cB))=\{\pi_a:a\in G\}$ where $\pi_a\colon p\mapsto ap$. Altogether, $E(S(\cB))$ isomorphic to the image of $G$ under the left regular representation in $\textnormal{Sym}(X)$, and so $E(S(\cB))$ is the \emph{group} $G/K$, where $K=\bigcap_{g\in G}gHg\inv$ is kernel of this representation. The appearance of $K$ can also be predicted by analyzing the maps $dp$ for $p\in S(\cB)$. In particular, given $a,b\in G$, if $p\in S(\cB)$ is the unique type containing $bH$, then $dp(aH)=aHb\inv$. So $dp(\cB)$ is contained in  the Boolean algebra generated by the cosets of $N$,  which is also the smallest bi-invariant Boolean algebra containing $\cB$. \end{example}

\begin{definition}
Given a left-invariant Boolean algebra $\cB\seq\cP(G)$, define $\cB^{\sharp}$ to be the smallest bi-invariant Boolean algebra containing $\cB$.
\end{definition}

Motivated by Example \ref{ex:easy}, we now study left-invariant Boolean algebras $\cB\seq G$ such that $dp(\cB)\seq\cB^\sharp$ for all $p\in S(\cB)$. This will be a natural weakening of the assumption in Proposition \ref{prop:SBsemi} suitable for left-invariant Boolean algebras that are not necessarily bi-invariant.

\begin{lemma}\label{lem:ESli}
Suppose $\cB\seq\cP(G)$ is a left-invariant Boolean algebra such that $dp(A)\in \cB^\sharp$ for all $A\in\cB$ and $p\in S(\cB)$. 
\begin{enumerate}[$(a)$]
\item If $A\in\cB^\sharp$ and $p\in S(\cB^\sharp)$ then $dp(A)\in\cB^\sharp$, and so $(S(\cB^\sharp),\ast)$ is a $G$-semigroup.
\item Given $p\in S(\cB^\sharp)$ and $q\in S(\cB)$, define $f_p(q)= \{A\in\cB:dq(A)\in p\}$. Then $f_p\in S(\cB)^{S(\cB)}$ for all $p\in S(\cB^\sharp)$.
\end{enumerate}
\end{lemma}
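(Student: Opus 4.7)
The plan is to exploit the fact that, since $\cB$ is already left-invariant, $\cB^\sharp$ is the Boolean algebra generated by the right translates $\{Ag : A \in \cB,\ g \in G\}$. Combined with the fact that $dp$ is a Boolean algebra homomorphism (Remark \ref{rem:BAhom}), this reduces part $(a)$ to a computation on generators of the form $Ag$.

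For part $(a)$, I would first observe that since $\cB^\sharp$ is bi-invariant, for any $p \in S(\cB^\sharp)$ and $g \in G$, the set $p' := \{B \in \cB^\sharp : Bg \in p\}$ is again a $\cB^\sharp$-type (this is a routine verification, or may be read off from the right action of $G$ on $S(\cB^\sharp)$). Using $x\inv(Ag)=(x\inv A)g$, a direct computation then yields
\[
dp(Ag) = \{x \in G : (x\inv A)g \in p\} = \{x \in G : x\inv A \in p'\} = dp'(A).
\]
For $A \in \cB$, the condition $x\inv A \in p'$ depends only on the restriction $q := p' \cap \cB \in S(\cB)$ (since $x\inv A \in \cB$ by left-invariance of $\cB$), so in fact $dp(Ag) = dq(A)$, which lies in $\cB^\sharp$ by the hypothesis of the lemma. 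Hence $dp$ maps $\cB^\sharp$ into itself, and the semigroup claim is immediate from Proposition \ref{prop:SBsemi} applied to the left-invariant Boolean algebra $\cB^\sharp$.

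For part $(b)$, I would directly verify the three defining conditions of a $\cB$-type in Definition \ref{def:Stone}. First, $\emptyset \notin f_p(q)$ because $dq(\emptyset) = \emptyset \notin p$. Second, if $A_1, A_2 \in f_p(q)$, then since $dq$ is a Boolean homomorphism we have $dq(A_1 \cap A_2) = dq(A_1) \cap dq(A_2) \in p$, giving $A_1 \cap A_2 \in f_p(q)$. Third, for any $A \in \cB$ one has $dq(G \setminus A) = G \setminus dq(A)$, and since $dq(A) \in \cB^\sharp$ by hypothesis, the type $p \in S(\cB^\sharp)$ decides it, placing exactly one of $A$, $G\setminus A$ in $f_p(q)$.

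The main obstacle is the calculation in part $(a)$ identifying $dp(Ag)$ with $dq(A)$ for a suitable $q \in S(\cB)$; once this is in hand, both parts reduce to routine Boolean-algebra bookkeeping together with the hypothesis of the lemma.
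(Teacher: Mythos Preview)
Your proof is correct and follows essentially the same approach as the paper: for part $(a)$ the paper likewise reduces (via Remark~\ref{rem:BAhom}) to sets of the form $Bg$ with $B\in\cB$ and then computes $dp(Bg)=dq(B)$ where $q=(pg\inv){\upharpoonright}\cB$, which is exactly your $p'{\upharpoonright}\cB$; for part $(b)$ the paper simply refers back to the verification in Proposition~\ref{prop:SBsemi}, which amounts to the same Boolean-homomorphism check you spell out.
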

\begin{proof}
Part $(a)$. Note that the second claim follows from the first and Proposition \ref{prop:SBsemi}. So fix $p\in S(\cB^\sharp)$ and $A\in\cB^\sharp$. We want to show $dp(A)\in\cB^\sharp$.  By Remark \ref{rem:BAhom}, we may assume $A$ is of the form $Bg$ for some $B\in\cB$ and $g\in G$. Let $q=pg\inv{\upharpoonright}\cB\in S(\cB)$. Then $dp(A)=dq(B)$, and $dq(B)\in\cB^\sharp$ by assumption on $\cB$. 

Part $(b)$.  Note the similarity between $f_p(q)$, as defined here for $p\in S(\cB^\sharp)$ and $q\in S(\cB)$, and $p\ast q$ for $p,q\in S(\cB)$ as in Definition \ref{def:dpA}. With this in mind, the verification that $f_p(q)\in S(\cB)$ for any $q\in S(\cB)$ is essentially the same as the first part of the proof of Proposition \ref{prop:SBsemi}. 
\end{proof}

We can now prove the main result of this section. 

\begin{theorem}\label{thm:ESli}
Suppose $\cB\seq\cP(G)$ is a left-invariant Boolean algebra such that $dp(A)\in \cB^\sharp$ for all $A\in\cB$ and $p\in S(\cB)$. Then the map $\Phi:p\mapsto f_p$, where $f_p$ is as in Lemma \ref{lem:ESli}, is a $G$-semigroup isomorphism between $(S(\cB^\sharp),\ast)$ and $(E(S(\cB)),\circ)$
\end{theorem}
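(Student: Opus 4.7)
The plan is to check in turn that $\Phi$ is well-defined (i.e., lands in $E(S(\cB))$), a bijection, $G$-equivariant, and a semigroup homomorphism. The first and third properties will drop out simultaneously from a single continuity-plus-density argument; injectivity will follow by restricting $p$ to a generating subset of $\cB^\sharp$; and the algebraic properties will reduce to brief calculations with the operator $d$.

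First I would show that $\Phi\colon S(\cB^\sharp)\to S(\cB)^{S(\cB)}$ is continuous. A subbasic open set of the product topology has the form $\{f : A\in f(q)\}$ for some $q\in S(\cB)$ and $A\in\cB$; its preimage under $\Phi$ is exactly $\{p\in S(\cB^\sharp) : dq(A)\in p\}=S_{dq(A)}(\cB^\sharp)$, which is clopen since $dq(A)\in\cB^\sharp$ by hypothesis. A short direct calculation shows $\Phi(p^{\cB^\sharp}_a)=\pi_a$ for every $a\in G$. Since the principal types are dense in $S(\cB^\sharp)$ by Exercise \ref{exc:Stone}$(c)$, continuity gives $\Phi(S(\cB^\sharp))\seq\overline{\{\pi_a:a\in G\}}=E(S(\cB))$. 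For surjectivity, $\Phi(S(\cB^\sharp))$ is compact, hence closed in the Hausdorff space $S(\cB)^{S(\cB)}$; since it contains every $\pi_a$, it contains $E(S(\cB))$.

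For injectivity, evaluating $f_p$ at the principal type $p^{\cB}_g$ yields $f_p(p^{\cB}_g)=\{A\in\cB : Ag\inv\in p\}$. Letting $g$ range over $G$, knowledge of $f_p$ thus determines the restriction of $p$ to $\cB_1:=\{Bg:B\in\cB,\ g\in G\}$. Since $\cB^\sharp$ coincides with the Boolean algebra generated by $\cB_1$ (a consequence of left-invariance of $\cB$), and since an ultrafilter on a Boolean algebra is determined by its restriction to any generating subset, $p$ is determined by $f_p$. Combined with surjectivity this gives a bijection, which compactness upgrades to a homeomorphism, though the statement only asks for a set-theoretic isomorphism.

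The remaining properties are computational. For the semigroup homomorphism, the identity $d(f_q(r))(A)=dq(dr(A))$, for $r\in S(\cB)$ and $A\in\cB$, immediately gives $(f_p\circ f_q)(r)=\{A\in\cB:dq(dr(A))\in p\}=f_{p\ast q}(r)$. For $G$-equivariance, the identity $dq(g\inv A)=g\inv dq(A)$ gives $\pi_g\circ f_p=f_{gp}$. The main conceptual point, I think, is recognizing that the hypothesis $dp(\cB)\seq\cB^\sharp$ is calibrated exactly to make $\Phi$ continuous; once this is in hand, density of principal types and compactness of $S(\cB^\sharp)$ do all the work of securing both membership in $E(S(\cB))$ and surjectivity onto it.
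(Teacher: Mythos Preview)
Your proof is correct and follows essentially the same approach as the paper: continuity of $\Phi$ via the hypothesis $dp(\cB)\seq\cB^\sharp$, density of principal types to land in and onto $E(S(\cB))$, injectivity by evaluating at principal $\cB$-types to recover $p$ on the generators $\{Ag:A\in\cB,\ g\in G\}$ of $\cB^\sharp$, and the same computational identities $d(f_q(r))(A)=dq(dr(A))$ and $dq(g\inv A)=g\inv dq(A)$ for the algebraic structure. The only cosmetic difference is that you make explicit why recovering $p$ on $\{Ag:A\in\cB,\ g\in G\}$ suffices (these sets generate $\cB^\sharp$), whereas the paper leaves this implicit.
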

\begin{proof}
By Lemma \ref{lem:ESli}$(b)$, $\Phi$ is a map from $S(\cB^\sharp)$ to $S(\cB)^{S(\cB)}$. We first show $\Phi$ is continuous. Fix $q\in S(\cB)$ and $A\in\cB$, and let $U=\{f\in S(\cB)^{S(\cB)}:A\in f(q)\}$ be the corresponding sub-basic open set in $S(\cB)^{S(\cB)}$. Then $f_p\in U$ if and only if $dq(A)\in p$, and so $\Phi\inv(U)=S_{dq(A)}(\cB^\sharp)$, which is open in $S(\cB^\sharp)$.

Recall that $E(S(\cB))$ is the closure in $S(\cB)^{S(\cB)}$ of $\{\pi_a:a\in G\}$ where $\pi_a\colon p\mapsto ap$. 
One easily checks that $\Phi(p^{\cB^\sharp}_a)=\pi_a$ for any $a\in G$. Since $\{p_a^{\cB^\sharp}:a\in G\}$ is dense in $S(\cB^\sharp)$ and $\Phi$ is continuous, it follows that $\{\pi_a:a\in G\}$ is a dense subset of $\Phi(S(\cB^\sharp))$. Therefore $\Phi(S(\cB^\sharp))\seq E(S(\cB))$. Since $\Phi$ is continuous, and $S(\cB^\sharp)$ is compact, it follows that $\Phi(S(\cB^\sharp))$ is compact, and hence closed in $S(\cB)^{S(\cB)}$. Altogether, $\Phi(S(\cB^\sharp))=E(S(\cB))$.

From now on, we view $\Phi$ as a surjective function from $S(\cB^\sharp)$ to $E(S(\cB))$. To show $\Phi$ is injective, fix $p,q\in S(\cB^\sharp)$ such that $f_p=f_q$. To show that $p=q$, it suffices to fix $A\in\cB$ and $g\in G$, and show $Ag\in p$ if and only if $Ag\in q$. To see this, let $r=p^{\cB}_{g\inv}$, and note that $Ag=dr(A)$. Therefore
\begin{multline*}
Ag\in p\miff dr(A)\in p\miff A\in f_p(r)\\
\miff A\in f_q(r)\miff dr(A)\in q\miff Ag\in q.
\end{multline*}

Since $\Phi$ is a continuous bijection between compact Hausdorff spaces, it is a homeomorphism. So to show that $\Phi$ is an isomorphism of $G$-flows, we just need to check that it preserves the actions of $G$ on $S(\cB^\sharp)$ and $E(S(\cB))$, i.e., $\Phi(ap)=\pi_a\circ \Phi(p)$ for any $a\in G$ and $p\in S(\cB^\sharp)$. So fix $a\in G$ and $p\in S(\cB^\sharp)$. Then, given $q\in S(\cB)$, we have that for any $A\in\cB$,
\[
A\in f_{ap}(q)\miff a\inv dq(A)\in p\miff dq(a\inv A)\in p\miff A\in af_p(q),
\]
and so $f_{ap}(q)=af_p(q)=\pi_a(f_p(q))$. This shows $\Phi(ap)=\pi_a\circ\Phi(p)$.

Finally, we show that $\Phi$ preserves the semigroup operations on $S(\cB^\sharp)$ and $E(S(\cB))$. Fix $p,q\in S(\cB^\sharp)$. For any $r\in S(\cB)$ and $A\in\cB$, we have $dq(dr(A))=d(f_q(r))(A)$ (similar to the proof of Proposition \ref{prop:SBsemi}) and so
\[
A\in f_{p\ast q}(r)\miff dr(A)\in p\ast q\miff d(f_q(r))(A)\in p\miff A\in f_p(f_q(r)).
\]
Therefore $\Phi(p\ast q)(r)=(\Phi(p)\circ\Phi(q))(r)$ for any $r\in S(\cB)$.
\end{proof}

\section{Stable subsets of groups}\label{sec:stable}

Throughout this section, we work with a fixed group $G$. 

\subsection{Boolean algebras of stable sets}
\begin{definition}\label{def:stableG}
A set $A\seq G$ is \textbf{stable} (in $G$) if the relation $xy\in A$ on $G\times G$ is stable (see Definition \ref{def:RBA}). Define $\cB^{\st}_G$ to be the collection of stable subsets of $G$.
\end{definition}

The following fact is well-known and left as an exercise. 

\begin{fact}\label{fact:BSTG}
 $\cB^{\st}_G$ is a bi-invariant Boolean algebra, which contains all subgroups of $G$ and is closed under $A\mapsto A\inv$. 
\end{fact}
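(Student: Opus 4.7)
The plan is to verify each of the four assertions by direct manipulation of witnessing sequences, reducing any hypothetical instability of a set in $\cB^{\st}_G$ to the instability of a set already assumed stable. The Boolean algebra requirement splits into closure under complement and union (intersection then follows by De Morgan; $\emptyset, G\in\cB^{\st}_G$ is trivial). For complements: if $(a_i)_{i\in I}, (b_i)_{i\in I}$ witness that $G\setminus A$ codes the infinite linear order $I$, then the same sequences, re-indexed by the reverse order $I^{\textnormal{op}}$, witness that $A$ codes $I^{\textnormal{op}}$, which is still an infinite linear order; so instability of $G\setminus A$ forces instability of $A$.

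The main obstacle is the union case, which requires a standard Ramsey argument. Suppose $A,B\in\cB^{\st}_G$ and sequences $(a_i)_{i\in\omega}, (b_i)_{i\in\omega}$ witness that $A\cup B$ codes $\omega$. Color each pair $\{i<j\}\in[\omega]^2$ by whether $a_ib_j\in A$ (color $0$) or $a_ib_j\in B\setminus A$ (color $1$); this is well-defined because $a_ib_j\in A\cup B$ for $i\leq j$. By Ramsey's theorem pick an infinite monochromatic subsequence $i_0<i_1<\cdots$, and assume without loss of generality that the color is $0$. Then $a_{i_k}b_{i_l}\in A$ for all $k<l$, while $a_{i_k}b_{i_l}\notin A$ for $k>l$ since $a_{i_k}b_{i_l}\notin A\cup B$. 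To eliminate the diagonal $k=l$, set $a'_k=a_{i_{2k}}$ and $b'_k=b_{i_{2k+1}}$; then $i_{2k}<i_{2l+1}$ iff $k\leq l$, so the pair $(a'_k), (b'_k)$ witnesses that $A$ codes $\omega$, contradicting stability of $A$.

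Bi-invariance and inverse-closure reduce to substitutions. For $g\in G$: $a_ib_j\in gA$ iff $(g\inv a_i)b_j\in A$, so replacing $(a_i)$ by $(g\inv a_i)$ transports a half-graph for $gA$ to one for $A$; symmetrically, $a_ib_j\in Ag$ iff $a_i(b_jg\inv)\in A$ handles right translation. For $A\inv$: $a_ib_j\in A\inv$ iff $b_j\inv a_i\inv\in A$, so if $A\inv$ codes $I$ then the sequences $(b_i\inv)$ and $(a_i\inv)$, indexed by $I^{\textnormal{op}}$, witness that $A$ codes $I^{\textnormal{op}}$.

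Finally, subgroups $H\leq G$ are in fact $2$-stable. If one had $i<j$ with $a_ib_i,\, a_ib_j,\, a_jb_j\in H$ and $a_jb_i\notin H$, then $b_i\inv b_j=(a_ib_i)\inv(a_ib_j)\in H$, whence $a_jb_i=(a_jb_j)(b_i\inv b_j)\inv\in H$, a contradiction. All assertions then follow from these four direct verifications, with the Ramsey step for unions being the only non-routine point.
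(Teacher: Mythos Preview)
The paper leaves this fact as an exercise and gives no proof, so there is nothing to compare against; your argument is a correct solution to that exercise. One small wrinkle: in the complement step, reversing the index order yields $a_ib_j\in A \Leftrightarrow i<_{I^{\textnormal{op}}}j$ (strict), not $i\leq_{I^{\textnormal{op}}}j$, so the diagonal is off by one---the same even/odd re-indexing trick you used at the end of the union step fixes this, but as written the sentence is slightly imprecise.
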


The primary goal of this paper is Theorem \ref{thm:intro}, which gives a structure theorem for arbitrary left-invariant sub-algebras of $\cB^{\st}_G$. The main reason we consider sub-algebras of $\cB^{\st}_G$, rather than just working with $\cB^{\st}_G$ itself, is due to issues with ``definability". In particular, Theorem \ref{thm:intro} shows that if $\cB$ is a left-invariant sub-algebra of $\cB^{\st}_G$, then any set in $\cB$ can be approximated by cosets of a finite-index subgroup of $G$, \emph{which is also in $\cB$}. So a stable subset of $G$ can be approximated by a subgroup that has some connection to original set. This kind of control is important in applications. For example,  \cite{CPT} uses stable subsets of pseudofinite groups to prove results about stable subsets of finite groups. In order for this to work, one needs restrict to the Boolean algebra of \emph{internal} stable subsets of a pseudofinite group (e.g., so that an internal stable set is approximated by an internal subgroup). 

We will also focus on the general setting of \emph{left-invariant} sub-algebras of $\cB^{\st}_G$, despite the fact that bi-invariant Boolean algebras are easier to work with. In addition to an overall motivation for more general results, the main reason for this focus is to capture the right notion of ``local stability theory" as formulated in the model-theoretic literature and, in particular, the work of Hrushovski and Pillay \cite[Section 5]{HrPiGLF}. For example, unlike the ``global" setting groups definable in stable theories, the ``bi-stratification" of a left-invariant stable formula need not be stable (see Example \ref{ex:bad}). A full analysis of this setting will be done in Section \ref{sec:CC}.  

Let us now start the journey toward our main results.
We first establish some properties of left-invariant sub-algebras of $\cB^{\st}_G$. For instance, we show that they satisfy the assumptions of Theorem \ref{thm:ESli}, and they also behave nicely with respect to a dual version of the $dp$ map.

\begin{lemma}\label{lem:BAstab}
Suppose $\cB$ is a left-invariant sub-algebra of $\cB^{\st}_G$. Fix $A\in\cB$ and, given $p\in S(\cB^\sharp)$, set $dp^*(A)=\{x\in G:Ax\inv\in p\}$.
\begin{enumerate}[$(a)$]
\item If $p\in S(\cB)$ then $dp(A)\in\cB^\sharp$.
\item If $p\in S(\cB^\sharp)$ then $dp^*(A)\in \cB$.
\item If $p\in S(\cB^\sharp)$ and $q\in S(\cB)$, then $dq(A)\in p$ if and only if $dp^*(A)\in q$.
\end{enumerate}
\end{lemma}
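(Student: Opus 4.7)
The plan is to reduce all three parts to the fundamental theorem of local stability (Theorem \ref{thm:deftypes}) applied to an appropriately chosen stable relation. The key is to encode the left-multiplication relation determined by $A$.

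Specifically, I would first define the binary relation $\varphi(x,y)$ on $G\times G$ by $\varphi(x,y)\Leftrightarrow xy\in A$. Stability of $A$ in $G$ is by definition stability of $\varphi$. Computing fibers, one has $\varphi(G,b)=Ab\inv$ and $\varphi(a,G)=a\inv A$. Therefore $\cB_\varphi$ is the Boolean algebra generated by the right translates of $A$ and $\cB^*_\varphi$ is the Boolean algebra generated by the left translates of $A$. Since $\cB$ is left-invariant and $A\in\cB$, we have $\cB^*_\varphi\seq\cB$; and since $\cB^\sharp$ is bi-invariant and contains $A$, we have $\cB_\varphi\seq\cB^\sharp$.

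Next I would unwind the definitions to match the two pairs of ``$d$-maps". For $p\in S(\cB)$, the restriction $p'=p{\upharpoonright}\cB^*_\varphi$ lies in $S(\cB^*_\varphi)$ and
\[
d^*_\varphi p'=\{a\in G:a\inv A\in p'\}=\{a\in G:a\inv A\in p\}=dp(A).
\]
Dually, for $p\in S(\cB^\sharp)$, the restriction $p''=p{\upharpoonright}\cB_\varphi$ lies in $S(\cB_\varphi)$ and
\[
d_\varphi p''=\{b\in G:Ab\inv\in p''\}=\{b\in G:Ab\inv\in p\}=dp^*(A).
\]
With these identifications, part $(a)$ is immediate from Theorem \ref{thm:deftypes}$(a)$, which gives $d^*_\varphi p'\in\cB_\varphi\seq\cB^\sharp$, and part $(b)$ follows from the same theorem applied in the other direction, giving $d_\varphi p''\in\cB^*_\varphi\seq\cB$.

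For part $(c)$, given $p\in S(\cB^\sharp)$ and $q\in S(\cB)$, set $p''=p{\upharpoonright}\cB_\varphi$ and $q'=q{\upharpoonright}\cB^*_\varphi$. Since $dq(A)=d^*_\varphi q'\in\cB_\varphi$, membership of $dq(A)$ in $p$ is equivalent to membership in $p''$; similarly $dp^*(A)=d_\varphi p''\in\cB^*_\varphi$, so membership in $q$ is equivalent to membership in $q'$. Symmetry of forking (Theorem \ref{thm:deftypes}$(b)$) gives $d^*_\varphi q'\in p''\Leftrightarrow d_\varphi p''\in q'$, which translates directly to the desired equivalence. The only mild obstacle is the notational bookkeeping between $\cB$, $\cB^\sharp$, $\cB_\varphi$, and $\cB^*_\varphi$, and checking that restrictions of types behave compatibly with the $d$-maps; once that is set up, all three parts are one-line consequences of Theorem \ref{thm:deftypes}.
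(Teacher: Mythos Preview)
Your proof is correct and follows essentially the same approach as the paper: both reduce all three parts to Theorem \ref{thm:deftypes} applied to a stable relation built from $A$. The only cosmetic difference is that the paper uses the relation $\varphi(x,y)\Leftrightarrow x\in y\inv A$ (so that $\cB_\varphi$ consists of left translates and $\cB^*_\varphi$ of right translates), whereas you use $\varphi(x,y)\Leftrightarrow xy\in A$, which swaps the roles of $\cB_\varphi$ and $\cB^*_\varphi$; the identifications of $dp(A)$ and $dp^*(A)$ with the appropriate $d_\varphi$ and $d^*_\varphi$ maps, and the application of symmetry of forking, are otherwise identical.
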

\begin{proof}
Let $\varphi(x,y)$ be the relation $x\in y\inv A$ on $G\times G$. Then $\varphi(x,y)$ is stable, $\cB_{\varphi}$ is the Boolean algebra generated by $\{gA:g\in G\}$, and $\cB^*_{\varphi}$ is the Boolean algebra generated by $\{Ag:g\in G\}$. In particular, $\cB_{\varphi}\seq\cB$ and $\cB^*_{\varphi}\seq\cB^\sharp$.

Part $(a)$. Fix $p\in S(\cB)$ and set $p_0=p{\upharpoonright}\cB_\varphi$. Then $dp(A)=d_\varphi p_0$, and so $dp(A)\in\cB^*_{\varphi}\seq\cB^\sharp$ by Theorem \ref{thm:deftypes}$(a)$.

Part $(b)$. Fix $p\in S(\cB^\sharp)$ and set $p_0=p{\upharpoonright}\cB^*_{\varphi}$. Then $dp^*(A)=d^*_{\varphi}p_0$, and so $dp^*(A)\in\cB_{\varphi}\seq\cB$ by Theorem \ref{thm:deftypes}$(a)$.

Part $(c)$. Fix $p\in S(\cB^\sharp)$ and $q\in S(\cB)$, and set $p_0=p{\upharpoonright}\cB^*_{\varphi}$ and $q_0=q{\upharpoonright}\cB_\varphi$. Then $dp^*(A)=d^*_{\varphi}p_0$ and $dq(A)=d_\varphi q_0$. By Theorem \ref{thm:deftypes}$(b)$, $d_\varphi q_0\in p_0$ if and only if $d^*_{\varphi}p_0\in q_0$. It follows that $d_\varphi q_0\in p$ if and only if $d^*_{\varphi}p_0\in q$, i.e., $dq(A)\in p$ if and only if $dp^*(A)\in q$. 
\end{proof}

We can now prove our first main result on stable subsets of groups.

\begin{theorem}\label{thm:left-inv}
Let $\cB\seq\cB^{\st}_G$ be a left-invariant Boolean algebra.
\begin{enumerate}[$(a)$]
\item $(S(\cB^\sharp),\ast)$ is a $G$-semigroup, and is isomorphic to the Ellis semigroup of $S(\cB)$.
\item $S(\cB)$ is a weakly almost periodic $G$-flow.
\item $\Sg(\cB)$ is the unique minimal subflow of $S(\cB)$.
\item $(\Sg(\cB^\sharp),\ast)$ is a profinite group.
\item There is a unique left-invariant probability measure  on $\cB$. 
\end{enumerate}
\end{theorem}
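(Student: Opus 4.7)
The plan is to prove the five parts in order, using Lemma \ref{lem:BAstab} to feed each into Theorem \ref{thm:ESli}, Theorem \ref{thm:EN}, and Corollary \ref{cor:NewSB}. Part $(a)$ is immediate: Lemma \ref{lem:BAstab}$(a)$ gives $dp(A)\in\cB^\sharp$ for all $p\in S(\cB)$ and $A\in\cB$, which is precisely the hypothesis of Theorem \ref{thm:ESli}. For part $(b)$, by Theorem \ref{thm:EN}$(a)$ it suffices to show each $f\in E(S(\cB))$ is continuous; by $(a)$, each such $f$ equals $f_p$ for some $p\in S(\cB^\sharp)$. Since the clopen sets $S_A(\cB)$ with $A\in\cB$ form a basis, combining parts $(b)$ and $(c)$ of Lemma \ref{lem:BAstab} yields $f_p^{-1}(S_A(\cB)) = \{q\in S(\cB):dq(A)\in p\} = \{q\in S(\cB):dp^*(A)\in q\} = S_{dp^*(A)}(\cB)$, which is clopen.

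For part $(c)$, the key move is to bootstrap through $\cB^\sharp$, which is itself a left-invariant (indeed bi-invariant) sub-algebra of $\cB^{\st}_G$ by Fact \ref{fact:BSTG}, and satisfies $(\cB^\sharp)^\sharp=\cB^\sharp$. Applying $(a)$ and $(b)$ to $\cB^\sharp$ yields a $G$-flow isomorphism $S(\cB^\sharp)\cong E(S(\cB^\sharp))$ and weak almost periodicity of $S(\cB^\sharp)$. Then Theorem \ref{thm:EN}$(b)(i)$ supplies a unique minimal subflow in $E(S(\cB^\sharp))$, hence in $S(\cB^\sharp)$, and Corollary \ref{cor:NewSB} identifies it as $\Sg(\cB^\sharp)$. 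The restriction map $S(\cB^\sharp)\to S(\cB)$ defined by $p\mapsto p\cap\cB$ is a continuous, $G$-equivariant surjection that carries generic types to generic types, so $\Sg(\cB)$ is nonempty, and another application of Corollary \ref{cor:NewSB} finishes $(c)$.

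For part $(d)$, the isomorphism in $(a)$ matches the unique minimal subflow of $E(S(\cB))$, a compact group by Theorem \ref{thm:EN}$(b)(iii)$, with the unique minimal subflow $\Sg(\cB^\sharp)$ of $S(\cB^\sharp)$ identified in the proof of $(c)$; being closed in the profinite space $S(\cB^\sharp)$, it is profinite as a topological space and hence a profinite group. For part $(e)$, parts $(b)$ and $(c)$ allow me to invoke Theorem \ref{thm:EN}$(b)(iv)$ to obtain a unique $G$-invariant Borel probability measure $\mu$ on $S(\cB)$; then $A\mapsto \mu(S_A(\cB))$ is a left-invariant probability measure on $\cB$, and uniqueness follows from the standard correspondence between finitely additive probability measures on a Boolean algebra and Borel probability measures on its Stone space.

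I expect the main obstacle to be part $(c)$: Theorem \ref{thm:EN}$(b)(i)$ directly yields a unique minimal subflow of the Ellis semigroup $E(S(\cB))$, but not of $S(\cB)$ itself in general, and the two can genuinely differ even for WAP flows. The remedy is that when the Boolean algebra is bi-invariant, the Stone space and the Ellis semigroup already coincide up to isomorphism by $(a)$, so Ellis--Nerurkar applies directly at the level of types; passing through $\cB^\sharp$ unlocks this, and restriction transports the conclusion back to $\cB$.
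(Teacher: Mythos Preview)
Your proof is correct and follows essentially the same approach as the paper's own proof: part $(a)$ via Lemma \ref{lem:BAstab}$(a)$ and Theorem \ref{thm:ESli}; part $(b)$ by computing $f_p^{-1}(S_A(\cB))=S_{dp^*(A)}(\cB)$ from Lemma \ref{lem:BAstab}$(b)$,$(c)$; part $(c)$ by bootstrapping through $\cB^\sharp$ and restricting; part $(e)$ via Theorem \ref{thm:EN}$(b)(iv)$ and the Stone-space/measure correspondence. The only cosmetic difference is in part $(d)$: the paper applies Theorem \ref{thm:EN}$(b)(iii)$ to $E(S(\cB^\sharp))\cong S(\cB^\sharp)$, while you apply it to $E(S(\cB))\cong S(\cB^\sharp)$, but both routes are valid since the $G$-semigroup isomorphism from $(a)$ identifies the minimal subflows either way.
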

\begin{proof}
Part $(a)$ follows from Theorem \ref{thm:ESli} and Lemma \ref{lem:BAstab}$(a)$. 

Part $(b)$. Given $p\in S(\cB^\sharp)$ and $q\in S(\cB)$, let $f_p(q)=\{A\in\cB:dq(A)\in p\}$. By Theorem \ref{thm:ESli} and Lemma \ref{lem:BAstab}$(a)$, every element of $E(S(\cB))$ is of the form $f_p$ for some $p\in S(\cB^\sharp)$. Therefore, to show that $S(\cB)$ is weakly almost periodic, it suffices by Theorem \ref{thm:EN}$(a)$ to show that $f_p$ is continuous for all $p\in S(\cB^\sharp)$. So fix $p\in S(\cB^\sharp)$. We use the dual $dp^*$ notation from Lemma \ref{lem:BAstab}. Given $q\in S(\cB)$, it follows from Lemma \ref{lem:BAstab}$(c)$ that $f_p(q)=\{A\in\cB:dp^*(A)\in q\}$ (recall also that $dp^*(A)\in \cB$ for any $A\in\cB$ by Lemma \ref{lem:BAstab}$(b)$).  So for any $A\in\cB$, we have $f_p\inv(S_{\!A}(\cB))=S_{dp^*(A)}(\cB)$, which implies that $f_p$ is continuous. 

Part $(c)$. By parts $(a)$ and $(b)$ applied to $\cB^\sharp$, $S(\cB^\sharp)$ is weakly almost periodic and $(E(S(\cB^\sharp)),\circ)\cong (S(\cB^\sharp),\ast)$. So by Theorem \ref{thm:EN}$(b)(i)$ and Corollary \ref{cor:NewSB}, $\Sg(\cB^\sharp)$ is the unique minimal subflow of $S(\cB^\sharp)$. Now, if $p\in \Sg(\cB^\sharp)$ then $p{\upharpoonright}\cB\in \Sg(\cB)$. So $\Sg(\cB)$ is the unique minimal subflow of $S(\cB)$ by Corollary \ref{cor:NewSB}. 

Part $(d)$ follows from Theorem \ref{thm:EN}$(b)(iii)$ since $S(\cB^\sharp)\cong E(S(\cB^\sharp))$ is weakly almost periodic, with unique minimal subflow $\Sg(\cB^\sharp)$. 

Part $(e)$. By Theorem \ref{thm:EN}$(b)(iv)$, and parts $(b)$ and $(c)$, there is a unique $G$-invariant Borel probability measure on $S(\cB)$. So the claim  follows from the usual correspondence between regular Borel ($\sigma$-additive) probability measures on $S(\cB)$ and (finitely additive) probability measures on $\cB$ (see, e.g., \cite[Proposition 416Q]{Fremv4}). One only needs to check that this correspondence preserves $G$-invariance. 
\end{proof}

For later reference, we note the following consequence of Theorem \ref{thm:left-inv}$(c)$ and Corollary \ref{cor:NewSB}.

\begin{corollary}\label{cor:gentype}
Suppose $\cB\seq\cB^{\st}_G$ is a left-invariant Boolean algebra, and $\cF\seq\cB$ is closed under finite intersections and contains only generic sets. Then there is some $p\in \Sg(\cB)$ such that $\cF\seq p$.
\end{corollary}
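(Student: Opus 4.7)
The plan is essentially to quote the two results mentioned. By Theorem \ref{thm:left-inv}$(c)$, $\Sg(\cB)$ is the unique minimal subflow of $S(\cB)$, so in particular $S(\cB)$ has a unique minimal subflow, which is condition $(i)$ of Corollary \ref{cor:NewSB}. That corollary then gives us condition $(v)$, which is literally the statement we want: any family $\cF\seq\cB$ closed under finite intersections and consisting of generic sets is contained in some $p\in\Sg(\cB)$.

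So there is no real obstacle; the substance was already packaged into Corollary \ref{cor:NewSB}. If one wanted to spell out why condition $(v)$ holds once we know $(i)$: observe that each set $\Sg_{\!A}(\cB)$ for $A\in\cF$ is closed in $\Sg(\cB)$, and nonempty by the equivalence of $(i)$ and $(iv)$ in Corollary \ref{cor:NewSB} applied to the generic set $A$. Since $\cF$ is closed under finite intersections, the family $\{\Sg_{\!A}(\cB):A\in\cF\}$ has the finite intersection property, so by compactness of $\Sg(\cB)$ there is some $p\in\Sg(\cB)$ lying in every $\Sg_{\!A}(\cB)$, i.e., $\cF\seq p$. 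But all of this is already embedded in the proof of Corollary \ref{cor:NewSB}, so a one-line proof citing Theorem \ref{thm:left-inv}$(c)$ and Corollary \ref{cor:NewSB} suffices.
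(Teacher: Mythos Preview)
Your proposal is correct and matches the paper's approach exactly: the paper states this corollary as an immediate consequence of Theorem~\ref{thm:left-inv}$(c)$ and Corollary~\ref{cor:NewSB}, with no further proof given. Your optional unpacking of the implication $(i)\Rightarrow(v)$ simply reproduces the argument already present in the proof of Corollary~\ref{cor:NewSB}.
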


\subsection{Measures and generic stable sets}

Note that if $\cB$ is a bi-invariant sub-algebra of $\cB^{\st}_G$, then $\cB^\sharp=\cB$ and so Theorem \ref{thm:left-inv} provides a full picture of the topological and algebraic  behavior of the $G$-flow $S(\cB)$. In particular, $(S(\cB),\ast)$ is a well-defined semigroup and $(\Sg(\cB),\ast)$ is a profinite group. In this subsection, we will use these results in the bi-invariant case to obtain some initial conclusions about the behavior of stable subsets of $G$. 

We start by noting that if $\cB$ is a bi-invariant sub-algebra of $\cB^{\st}_G$ then, since $(\Sg(\cB),\ast)$ is a profinite group, it admits a unique bi-invariant Borel probability measure (i.e., the normalized Haar measure). The next lemma makes an explicit connection between this measure and the measure given by Theorem \ref{thm:left-inv}$(e)$.

\begin{lemma}\label{lem:must}
Let $\cB$ be a bi-invariant sub-algebra of $\cB^{\st}_G$, and let $\mu$ be the unique left-invariant probability measure  on $\cB$ (which exists by Theorem \ref{thm:left-inv}(e)).  Then $\mu$ is bi-invariant and, for any $A\in\cB$, $\mu(A)$ is the normalized Haar measure of $\Sg_{\!A}(\cB)$.
\end{lemma}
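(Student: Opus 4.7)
The plan is to prove the two assertions separately, using in each case the uniqueness part of Theorem~\ref{thm:left-inv}$(e)$ as the key leverage.

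First, to establish bi-invariance of $\mu$, fix $g\in G$ and define $\mu_g\colon\cB\to[0,1]$ by $\mu_g(A)=\mu(Ag)$; this makes sense because $\cB$ is bi-invariant. Clearly $\mu_g$ is a probability measure. Since left and right translations on $G$ commute, $\mu_g(hA)=\mu(hAg)=\mu(Ag)=\mu_g(A)$, so $\mu_g$ is left-invariant. By uniqueness, $\mu_g=\mu$, i.e., $\mu(Ag)=\mu(A)$ for all $A$ and $g$.

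Next, for the Haar identification, let $\nu$ be the normalized Haar measure on the profinite group $(\Sg(\cB),\ast)$ (Theorem~\ref{thm:left-inv}$(d)$), and define $\nu^*(A)=\nu(\Sg_{\!A}(\cB))$ for $A\in\cB$. Since $\Sg_{\!A}(\cB)$ is clopen in $\Sg(\cB)$, this is well-defined; finite additivity and $\nu^*(G)=1$ follow because $A\mapsto\Sg_{\!A}(\cB)$ is a Boolean-algebra homomorphism. I would then check left-invariance and invoke uniqueness once more to get $\nu^*=\mu$, which yields the desired formula.

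The main obstacle is verifying that $\nu^*(gA)=\nu^*(A)$, since a direct appeal to Haar invariance under left multiplication in $\Sg(\cB)$ fails: one easily shows $\Sg_{gA}(\cB)=g\Sg_{\!A}(\cB)=\{p^{\cB}_g\ast p:p\in\Sg_{\!A}(\cB)\}$ using Proposition~\ref{prop:pggp}, but the principal type $p^{\cB}_g$ is generally not in $\Sg(\cB)$. The trick is to use the unique idempotent $u\in\Sg(\cB)$ supplied by Theorem~\ref{thm:EN}$(b)(ii)$--$(iii)$: since $u$ is the identity of the group $\Sg(\cB)$ and commutes with every element of the Ellis semigroup, for $p\in\Sg_{\!A}(\cB)$ we have
\[
p^{\cB}_g\ast p \;=\; p^{\cB}_g\ast(u\ast p) \;=\; (p^{\cB}_g\ast u)\ast p \;=\; \phi(g)\ast p,
\]
where $\phi(g):=p^{\cB}_g\ast u=gu\in\Sg(\cB)$ (using $G$-invariance of the subflow $\Sg(\cB)$). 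Thus $g\Sg_{\!A}(\cB)=\phi(g)\ast\Sg_{\!A}(\cB)$, and now left-invariance of Haar measure on $\Sg(\cB)$ gives $\nu^*(gA)=\nu(\phi(g)\ast\Sg_{\!A}(\cB))=\nu(\Sg_{\!A}(\cB))=\nu^*(A)$.

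With $\nu^*$ shown to be a left-invariant probability measure on $\cB$, uniqueness forces $\nu^*=\mu$, completing the proof.
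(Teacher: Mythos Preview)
Your proof is correct and follows essentially the same approach as the paper: define a measure via the Haar measure of $\Sg_{\!A}(\cB)$, use the idempotent $u$ to rewrite the $G$-action as left $\ast$-multiplication by $gu\in\Sg(\cB)$, apply Haar invariance, and conclude by uniqueness. The only structural difference is that you prove bi-invariance of $\mu$ separately first (via the standard ``$\mu_g(A)=\mu(Ag)$ is also left-invariant'' trick), whereas the paper shows bi-invariance of the Haar-derived measure $\nu$ directly on both sides and then deduces bi-invariance of $\mu$ from $\mu=\nu$; your computation $g\Sg_{\!A}(\cB)=gu\ast\Sg_{\!A}(\cB)$ is the same identity the paper obtains, just reached slightly more directly without passing through the inverse of $g^{-1}u$.
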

\begin{proof}
Let $\eta$ denote the normalized Haar measure on $\Sg(\cB)$, and  define $\nu\colon\cB\to [0,1]$ such that $\nu(A)=\eta(\Sg_{\!A}(\cB))$. Then $\nu$ is a probability measure on $\cB$ (this uses the fact that if $A,B\in\cB$ then $\Sg_{\!A\cap B}(\cB)=\Sg_{\!A}(\cB)\cap \Sg_{\!B}(\cB)$ and $\Sg_{\!A\cup B}=\Sg_{\!A}(\cB)\cup\Sg_{\!B}(\cB)$). We will show that $\nu$ is bi-invariant. Given this, it will follow from Theorem \ref{thm:left-inv}$(e)$ that $\mu=\nu$, which gives us the desired results.  

Toward proving that $\nu$ is bi-invariant, fix $A\in \cB$ and $g\in G$. We want to show $\eta(\Sg_{\!gA}(\cB))=\eta(\Sg_{\!A}(\cB))=\eta(\Sg_{\!Ag}(\cB))$. Let $u$ be the identity in $\Sg(\cB)$. Given $p\in \Sg(\cB)$, we have 
$$
p\in \Sg_{\!gA}(\cB)\miff A\in g\inv p\miff A\in g\inv(u\ast p)\miff A\in g\inv u\ast p.
$$
Since $g\inv u\in \Sg(\cB)$, it follows that $\Sg_{\!gA}(\cB)=q\ast \Sg_{\!A}(\cB)$, where $q$ is the inverse of $g\inv u$ in $(\Sg(\cB),\ast)$. Similarly, $\Sg_{\!Ag}(\cB)=\Sg_{\!A}(\cB)\ast r$, where $r$ is the inverse of $ug\inv$. So we have the desired result by bi-invariance of $\eta$.
\end{proof}

Note that if $\mu$ is the unique bi-invariant probability measure on $\cB^{\st}_G$, and $\cB$ is a left-invariant sub-algebra of $\cB^{\st}_G$, then $\mu{\upharpoonright} \cB$ must be the unique left-invariant probability measure on $\cB$. So we can think of a stable set $A$ as having a uniquely defined measure $\mu(A$), which is independent of then ambient sub-algebra of $\cB^{\st}_G$.

\begin{theorem}\label{thm:gensets}
Let $\mu$ be the unique bi-invariant probability measure on $\cB^{\st}_G$.
\begin{enumerate}[$(a)$] 
\item If $A,B\in\cB^{\st}_G$ and $A\cup B$ is generic, then $A$ is generic or $B$ is generic.
\item If $A\in\cB^{\st}_G$ then $A$ is generic or $G\backslash A$ is generic.
\item If $A\in\cB^{\st}_G$ then $\mu(A\inv)=\mu(A)$.
\item Given $A\in\cB^{\st}_G$, the following are equivalent:
\begin{enumerate}[$(i)$]
\item $A$ is generic (i.e, $G=FA$ for some finite $F\seq G$);
\item $G=AF$ for some finite $F\seq G$;
\item $G=EAF$ for some finite $E,F\seq G$;
\item $\mu(A)>0$.
\end{enumerate}
\end{enumerate}
\end{theorem}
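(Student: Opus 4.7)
For (a), the plan is to invoke Corollary \ref{cor:gentype} with $\cB = \cB^{\st}_G$ and $\cF = \{A \cup B\}$ (trivially closed under finite intersections and consisting of generic sets) to obtain a generic type $p \in \Sg(\cB^{\st}_G)$ containing $A \cup B$. Every $\cB^{\st}_G$-type is prime in the Boolean-algebra sense: if both $A, B \notin p$ then $G \setminus A$ and $G \setminus B$ lie in $p$, so their intersection $G \setminus (A \cup B)$ is in $p$, contradicting $A \cup B \in p$. Hence $A \in p$ or $B \in p$, and Exercise \ref{exc:gen} ensures that whichever one lies in the generic type $p$ is itself generic. Part (b) is then immediate by applying (a) to the decomposition $G = A \cup (G \setminus A)$.

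For (c), I would introduce $\nu(A) := \mu(A\inv)$, which is well-defined on $\cB^{\st}_G$ by Fact \ref{fact:BSTG}. Since inversion commutes with Boolean operations and fixes $G$, the map $\nu$ is a probability measure on $\cB^{\st}_G$; moreover $\nu(gA) = \mu(A\inv g\inv) = \mu(A\inv) = \nu(A)$, using right-invariance of $\mu$ coming from Lemma \ref{lem:must}, and symmetrically $\nu$ is right-invariant. The uniqueness clause in Theorem \ref{thm:left-inv}(e) then forces $\nu = \mu$.

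For (d), I would close the cycle (i) $\Rightarrow$ (iii) $\Rightarrow$ (iv) $\Rightarrow$ (i) and pick up (ii) via (iv) $\Rightarrow$ (ii) $\Rightarrow$ (iii). The implications (i) $\Rightarrow$ (iii) and (ii) $\Rightarrow$ (iii) are trivial. For (iii) $\Rightarrow$ (iv), bi-invariance and finite subadditivity of $\mu$ give $1 = \mu(G) \leq |E||F|\mu(A)$. The decisive step, (iv) $\Rightarrow$ (i), uses Lemma \ref{lem:must} to write $\mu(A) = \eta(\Sg_{\!A}(\cB^{\st}_G))$; were $A$ not generic, Exercise \ref{exc:gen} would force $\Sg_{\!A}(\cB^{\st}_G) = \emptyset$, contradicting $\mu(A) > 0$. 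Finally, (iv) $\Rightarrow$ (ii) is obtained by applying (iv) $\Rightarrow$ (i) to $A\inv$, using $\mu(A\inv) = \mu(A) > 0$ from (c) to produce finite $F$ with $G = F A\inv$; inverting both sides yields $G = A F\inv$. The main obstacle is (iv) $\Rightarrow$ (i), where everything rests on the identification of $\mu$ with Haar measure on $\Sg(\cB^{\st}_G)$ from Lemma \ref{lem:must} together with the characterization of generic types as supported only on generic sets; part (c) is then the bridge that allows all the left-sided arguments to transfer to the right.
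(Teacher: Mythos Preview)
Your proposal is correct and follows essentially the same approach as the paper: parts $(a)$--$(c)$ are identical (Corollary \ref{cor:gentype} for $(a)$, uniqueness of $\mu$ plus inversion for $(c)$), and in $(d)$ the key steps $(iv)\Rightarrow(i)$ via Lemma \ref{lem:must} and the passage to $(ii)$ via part $(c)$ applied to $A\inv$ match the paper exactly, with only a cosmetic reordering of the implication graph.
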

\begin{proof}
Part $(a)$. If $A\cup B$ is generic then, by Corollary \ref{cor:gentype} there is some generic type $p\in S(\cB^{\st}_G)$ such that $A\cup B\in p$. Now $A\in p$ or $B\in p$ since $p$ is a type.

Part $(b)$ is immediate from part $(a)$. 

Part $(c)$. By Lemma \ref{lem:must}, $A\mapsto \mu(A\inv)$ is a left-invariant probability measure on $\cB^{\st}_G$. So the claim follows from Theorem \ref{thm:left-inv}$(e)$.

Part $(d)$. $(i)\Rightarrow (iv)$ follows from left invariance and finite additivity of $\mu$.

$(iv)\Rightarrow (i)$. By Lemma \ref{lem:must}, $\mu(A)$ is the normalized Haar measure of $\Sg_{\!A}(\cB^{\st}_G)$. So if $\mu(A)>0$ then $\Sg_{\!A}(\cB^{\st}_G)$ is nonempty, and thus $A$ is generic.

$(i)\Rightarrow (ii)$. Suppose $A$ is generic. Then $A\inv$ is generic by $(i)\Leftrightarrow (iv)$ and part $(c)$. Thus $G=FA\inv$ for some finite $F\seq G$, and so $G=AF\inv$.  

$(ii)\Rightarrow (iii)$ is trivial. 

$(iii)\Rightarrow (iv)$.  If $G=EAF$ for some finite $E,F\seq G$ then, by finite additivity, there are $g,h\in G$ such that $\mu(gAh)>0$. So $\mu(A)>0$ by Lemma \ref{lem:must}.
\end{proof}

\begin{remark}
In part $(d)$ of the previous theorem, the equivalence of $(i)$ and $(ii)$ is trivial if $G$ is abelian. So we note that it is nontrivial in general. For example,  let $G$ be the free group on two generators $a$ and $b$, and let $A$ be the set of words in $G$ that start with $a$. Then $A$ is (left) generic since $G=ba\inv A\cup a\inv A$, but not right generic (i.e., fails condition $(ii)$). 
\end{remark}

\subsection{Algebraic structure of $\Sg(\cB)$}

Let $\cB$ be a left-invariant sub-algebra of $\cB^{\st}_G$. We have seen that  if $\cB$ is bi-invariant then $(\Sg(\cB),\ast)$ is a profinite group. The goal of this subsection is to show that even without bi-invariance, $\Sg(\cB)$ still exhibits algebraic structure compatible with the topology. We first set some terminology. 

 \begin{definition}
Let $C$ be compact Hausdorff group. A \textbf{homogeneous $C$-space} is a Hausdorff space $X$ together with a transitive continuous group action $C\times X\to X$.
\end{definition}

Given an arbitrary group $H$ and an arbitrary subgroup $K\leq H$, we let $H/K$ denote the set of left cosets of $K$ in $H$.
By the general theory, any homogeneous $C$-space can be identified with $C/K$ for some closed subgroup $K$ (see, e.g., \cite[Proposition 1.10]{HofMo3}). We will show that if $\cB$ is as above, then $\Sg(\cB)$ is a profinite homogeneous $\Sg(\cB^\sharp)$-space. This will be a specific instance of the following general fact, which is largely due to Auslander \cite{Ausbook}.

\begin{lemma}\label{lem:HSgen}
Let $S$ be a weakly almost periodic $G$-flow, and $X\seq S$ a minimal subflow. Let $C$ be the unique minimal subflow of $E(S)$, and recall that $(C,\circ)$ is a compact Hausdorff group (see Theorem \ref{thm:EN}). Then $X$ is a homogeneous $C$-space, witnessed by the action $(f,p)\mapsto f(p)$. Therefore, if $x_0\in X$ is a fixed point and $K=\{f\in C:f(x_0)=x_0\}$ is the stabilizer of $x_0$, then $K$ is closed subgroup of $C$, and $C/K$ and $X$ are isomorphic as homogeneous $C$-spaces via $f\circ K\mapsto f(x_0)$.
\end{lemma}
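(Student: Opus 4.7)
The plan is to verify that the evaluation formula $(f,p)\mapsto f(p)$ gives a well-defined, continuous, and transitive action of $C$ on $X$, and then to read off the $C/K$ description from general principles. For well-definedness, note that every $f\in E(S)$ preserves $X$: by definition of $E(S)$, $f$ is a pointwise limit of translations $\pi_{a_i}$ with $a_i\in G$, and for $p\in X$ the points $\pi_{a_i}(p)=a_ip$ lie in the closed $G$-invariant set $X$, so $f(p)\in X$ as well. In particular $f(X)\seq X$ for every $f\in C\seq E(S)$.

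Separate continuity of the action is immediate: continuity in $f$ comes from the subspace topology on $C\seq S^S$, while continuity in $p$ uses weak almost periodicity, which by Theorem \ref{thm:EN}(a) guarantees that every $f\in E(S)$ is a continuous self-map of $S$. The main obstacle is the upgrade from separate to joint continuity; this is the classical theorem of Ellis that a separately continuous action of a compact Hausdorff group on a compact Hausdorff space is automatically jointly continuous, and it applies because $(C,\circ)$ is a compact Hausdorff group by Theorem \ref{thm:EN}(b)(iii).

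For transitivity, fix $p\in X$ and consider the orbit $Cp$, the image of the compact set $C$ under the continuous evaluation $f\mapsto f(p)$. This is compact, nonempty, and $G$-invariant: for $a\in G$ and $f\in C$ we have $a\cdot f(p)=(\pi_a\circ f)(p)$ with $\pi_a\circ f\in C$ since $C$ is itself a subflow of $E(S)$. Thus $Cp$ is a nonempty subflow of $S$ contained in $X$, and minimality of $X$ forces $Cp=X$.

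Finally, fix $x_0\in X$ and let $\Psi\colon C\to X$ be $\Psi(f)=f(x_0)$. Then $\Psi$ is continuous and, by transitivity, surjective, so $K=\Psi\inv(x_0)$ is closed. To see $K$ is a subgroup, I would first check that the identity $u$ of $(C,\circ)$ acts as the identity on $X$: any $q\in X$ equals $f(p)$ for some $f\in C$ and $p\in X$, so $u(q)=(u\circ f)(p)=f(p)=q$ since $u$ is the group identity. Hence $u\in K$; closure of $K$ under $\circ$ is immediate; and for $f\in K$, applying the identity $f\inv\circ f=u$ at $x_0$ yields $f\inv(x_0)=u(x_0)=x_0$. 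The map $\Psi$ therefore descends to a continuous $C$-equivariant bijection $\bar\Psi\colon C/K\to X$, $\bar\Psi(fK)=f(x_0)$, which is a homeomorphism because $C/K$ is compact and $X$ is Hausdorff.
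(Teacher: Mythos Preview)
Your argument is correct and follows the same overall outline as the paper's proof: invariance of $X$ under $E(S)$, separate continuity, Ellis's joint-continuity theorem, transitivity via minimality of $X$, and the orbit--stabilizer description. The one genuine difference is how you show that the idempotent $u$ acts as the identity on $X$. The paper uses the commutation relation $u\circ\pi_a=\pi_a\circ u$ from Theorem~\ref{thm:EN}(b)(ii), shows $u$ fixes the $G$-orbit of $u(p)$, and then passes to the closure using continuity of $u$. You instead first establish transitivity (which needs only continuity of $f\mapsto f(p)$), and then observe that every $q\in X$ has the form $f(p)$ with $f\in C$, whence $u(q)=(u\circ f)(p)=f(p)=q$ because $u$ is the group identity of $(C,\circ)$. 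This is arguably cleaner and avoids invoking Theorem~\ref{thm:EN}(b)(ii).

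One small logical reordering is worth making: you appeal to Ellis's joint-continuity theorem \emph{before} verifying that $u$ acts as the identity, but that theorem is stated for separately continuous \emph{group actions}, so strictly speaking you need the identity axiom in hand first. Since your identity argument only requires transitivity, which in turn only needs separate continuity, simply move the paragraph showing $u|_X=\mathrm{id}_X$ ahead of the Ellis step. (As the paper notes, separate continuity already suffices for the remaining claims, so this reshuffling costs nothing.)
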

\begin{proof}
The $G$-flow $X$ is ``almost periodic", and so this statement elaborates slightly on \cite[Theorem 3.6]{Ausbook}. We include details for the sake of clarity. First, note that since $X$ is closed and $G$-invariant, it is closed under any $f\in E(S)$ by definition of $E(S)$. For the rest of the proof, we restrict to $p\in X$ and $f\in C$.

To verify that $(f,p)\mapsto f(p)$ is a group action, we just need to check that the identity $u\in C$ is the identity map on $X$. We follow the proof of \cite[Proposition II.8(1)]{EllNer}. Recall that for $a\in G$, $\pi_a$ denotes the map $p\mapsto ap$, which is in $E(S)$. Now fix some $p\in X$. Then, for any $a\in G$, we have 
$$
u(au(p))=(u\circ \pi_a\circ u)(p)=(\pi_a\circ u\circ u)(p)=(\pi_a\circ u)(p)=au(p),
$$ 
where the second and third equalities use Theorem \ref{thm:EN}$(b)(ii)$. So $u$ is the identity on the $G$-orbit of $u(p)$, which is dense in $X$ since $X$ is minimal. Since $u$ is continuous (by Theorem \ref{thm:EN}$(a)$), it follows that $u$ is the identity on $X$. 

Next, note that for a fixed $f\in C$, $p\mapsto f(p)$ is continuous by Theorem \ref{thm:EN}$(a)$; and for a fixed $p\in C$, $f\mapsto f(p)$ is continuous by definition of the topology on $C$. So the action $(f,p)\mapsto f(p)$ is separately continuous and thus continuous by a result of Ellis  \cite{EllisLCTG}. (However, separated continuity will suffice for the remaining claims here, and thus for all of our main results.)

We now prove transitivity. Fix $p\in X$, and set $Y=\{f(p):f\in C\}$. We show that $X=Y$. Since $f\mapsto f(p)$ is continuous, and $C$ is compact, it follows that $Y$ is closed in $X$. Since $C$ is a $G$-flow via $(a,f)\mapsto \pi_a\circ f$, it also follows that $Y$ is $G$-invariant. So $X=Y$ since $X$ is minimal. 

The rest now follows from basic facts about group actions, and only requires continuity of $f\mapsto f(x_0)$ (see, e.g., \cite[Proposition 1.10]{HofMo3}).
\end{proof}

We now apply Lemma \ref{lem:HSgen} to the $G$-flow $S(\cB)$, where $\cB$ is a left-invariant sub-algebra of $\cB^{\st}_G$. In order to obtain a more explicit statement, we will make a special choice for the fixed point $x_0$ referenced in the previous lemma. 

\begin{corollary}\label{cor:HSstone}
Suppose $\cB$ is a left-invariant sub-algebra of $\cB^{\st}_G$. Then $\Sg(\cB)$ is a profinite homogeneous $\Sg(\cB^\sharp)$-space. In particular,  let $u$ be the identity in $(\Sg(\cB^\sharp),\ast)$, and set $K=\{p\in \Sg(\cB^\sharp):p{\upharpoonright}\cB=u{\upharpoonright}{\cB}\}$. Then $K$ is closed subgroup of $\Sg(\cB^\sharp)$, and $\Sg(\cB^\sharp)/K$ is isomorphic to $\Sg(\cB)$, as a homogeneous $\Sg(\cB^\sharp)$-space, via the map $p\ast K\mapsto p{\upharpoonright}\cB$.
\end{corollary}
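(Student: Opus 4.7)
The plan is to apply Lemma \ref{lem:HSgen} to $S = S(\cB)$, taking $X = \Sg(\cB)$ and $x_0 = u{\upharpoonright}\cB$. By Theorem \ref{thm:left-inv}(b)--(c), $S(\cB)$ is weakly almost periodic with unique minimal subflow $\Sg(\cB)$. By Theorem \ref{thm:left-inv}(a), the map $\Phi\colon p\mapsto f_p$ is a $G$-semigroup isomorphism from $(S(\cB^\sharp),\ast)$ onto $(E(S(\cB)),\circ)$; applying Theorem \ref{thm:left-inv}(c) to $\cB^\sharp$ shows that $\Sg(\cB^\sharp)$ is the unique minimal subflow of $S(\cB^\sharp)$, so $\Phi$ identifies it with the unique minimal subflow $C$ of $E(S(\cB))$. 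Lemma \ref{lem:HSgen} then gives a separately continuous transitive action of $(\Sg(\cB^\sharp),\ast)$ on $\Sg(\cB)$ by $(p,q)\mapsto f_p(q)$, exhibiting $\Sg(\cB)$ as a homogeneous $\Sg(\cB^\sharp)$-space.

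The key calculation is $f_p(u{\upharpoonright}\cB)=p{\upharpoonright}\cB$ for every $p\in\Sg(\cB^\sharp)$. By Lemma \ref{lem:ESli}(b), $f_p(u{\upharpoonright}\cB)=\{A\in\cB:d(u{\upharpoonright}\cB)(A)\in p\}$, and for any $A\in\cB$ the left-invariance of $\cB$ gives $d(u{\upharpoonright}\cB)(A)=\{x\in G:x\inv A\in u\}=du(A)$. Hence $f_p(u{\upharpoonright}\cB)=\{A\in\cB:du(A)\in p\}=(p\ast u){\upharpoonright}\cB=p{\upharpoonright}\cB$, the last step using that $u$ is the identity of $(\Sg(\cB^\sharp),\ast)$. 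So $x_0=u{\upharpoonright}\cB\in\Sg(\cB)$ is a point whose orbit map under $\Sg(\cB^\sharp)$ coincides with the restriction map $p\mapsto p{\upharpoonright}\cB$, and whose stabilizer is exactly the set $K$ from the statement.

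Feeding these identifications into the concluding assertion of Lemma \ref{lem:HSgen} yields that $K$ is a closed subgroup of $\Sg(\cB^\sharp)$ and that $p\ast K\mapsto p{\upharpoonright}\cB$ defines an isomorphism of homogeneous $\Sg(\cB^\sharp)$-spaces $\Sg(\cB^\sharp)/K\to\Sg(\cB)$. Profiniteness of $\Sg(\cB)$ is automatic: it is a closed subspace of the Stone space $S(\cB)$, and Stone spaces are compact, Hausdorff, and totally disconnected by Exercise \ref{exc:Stone}(a).

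The main obstacle is purely bookkeeping around the isomorphism $\Phi$: one must verify that the abstract $C$-action on $X$ supplied by Lemma \ref{lem:HSgen} transports under $\Phi\inv$ to precisely the action $(p,q)\mapsto f_p(q)$ on $\Sg(\cB)$, and that the chosen basepoint $u{\upharpoonright}\cB$ really is generic (which follows from $u\in\Sg(\cB^\sharp)$ together with Exercise \ref{exc:gen}). Once these alignments are in place, everything else is a direct appeal to previously established machinery.
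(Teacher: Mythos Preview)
Your proposal is correct and follows essentially the same route as the paper: apply Lemma \ref{lem:HSgen} to $S(\cB)$ with $X=\Sg(\cB)$, transport the minimal subflow $C\seq E(S(\cB))$ back to $\Sg(\cB^\sharp)$ via the isomorphism $\Phi$ of Theorem \ref{thm:ESli}, and then verify the key identity $f_p(u{\upharpoonright}\cB)=p{\upharpoonright}\cB$ by the chain $d(u{\upharpoonright}\cB)(A)=du(A)$ and $p\ast u=p$. The paper's proof is nearly identical, differing only in notation (it writes $\Phi(p)(u_0)$ for your $f_p(u{\upharpoonright}\cB)$) and in omitting the explicit remarks on profiniteness and on why $u{\upharpoonright}\cB\in\Sg(\cB)$ that you include.
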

\begin{proof}
By Theorem \ref{thm:left-inv} and Lemma \ref{lem:HSgen}, $\Sg(\cB)$ is a profinite homogeneous $\Sg(\cB^\sharp)$-space. This goes through the $G$-semigroup isomorphism $\Phi\colon S(\cB^\sharp)\to E(S(\cB))$ given by Theorem \ref{thm:ESli} (via Lemma \ref{lem:BAstab}$(a)$). In particular, let $u$ be the identity in $\Sg(\cB^\sharp)$, and set $u_0=u{\upharpoonright}\cB$ and $K'=\{p\in S(\cB^\sharp):\Phi(p)(u_0)=u_0\}$. Then $K'$ is a closed subgroup of $\Sg(\cB^\sharp)$, and $\Sg(\cB^\sharp)/K'$ is isomorphic to $\Sg(\cB)$, as a homogeneous $\Sg(\cB^\sharp)$-space, via the map $p\ast K'\mapsto \Phi(p)(u_0)$. Therefore, to prove the claim, we only need to verify that if $p\in \Sg(\cB^\sharp)$ then $\Phi(p)(u_0)=p{\upharpoonright}\cB$. So fix $p\in \Sg(\cB^\sharp)$ and $A\in\cB$. Then $du_0(A)=du(A)$ by definition of $u_0$. By definition of $\Phi$, we have
$$
A\in \Phi(p)(u_0)\miff du_0(A)\in p\miff du(A)\in p\miff A\in p\ast u\miff A\in p.
$$
Therefore $\Phi(p)(u_0)=p{\upharpoonright}\cB$. 
\end{proof}

\subsection{The structure of stable sets}
We are now ready to start discussing subgroups of $G$. Let $\cB\seq\cP(G)$ be a left-invariant Boolean algebra, and suppose $H$ is a finite-index subgroup of $G$, which is in $\cB$.
Then $G$ is partitioned into finitely many left cosets of $H$, each of which is in $\cB$. Therefore, any type $p\in S(\cB)$ must contain a unique left coset of $H$. This motivates the following notation.

\begin{definition}
Let $\cB\seq\cP(G)$ be a left-invariant Boolean algebra. We write $H\leq^{\cB}_{\fin} G$ to denote that $H$ is a finite-index subgroup of $G$ and $H\in\cB$. Given $p\in S(\cB)$ and $H\leq^{\cB}_{\fin}G$, we let $pH$ denote the unique left coset of $H$ in $p$.
\end{definition}

For example, if $p=p^{\cB}_a$ is the principal $\cB$-type on $a\in G$, then $pH=aH$.

\begin{lemma}\label{lem:cosets}
Let $\cB$ be a bi-invariant sub-algebra of $\cB^{\st}_G$. Fix $p,q\in S(\cB)$ and $H\leq^{\cB}_{\fin} G$. Then $(p\ast q)H=pH$ if and only if $qH=H$. Moreover, if $H$ is normal, then $(p\ast q)H=pH\cdot qH$ (where $\cdot$ denotes the group operation in $G/H$).
\end{lemma}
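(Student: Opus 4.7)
The plan is to attack both statements by direct computation, unwinding the definitions of $\ast$ and $dq$ and using the fact that for a finite-index $H \in \cB$, any type $r \in S(\cB)$ contains exactly one left coset of $H$, namely $rH$, so $xH \in r$ iff $xH = rH$. Write $pH = aH$ and $qH = bH$. The main work is to identify $dq(cH)$ for a given coset $cH$, because then $cH \in p \ast q$ iff $dq(cH) \in p$, and since $p$ is a $\cB$-ultrafilter containing the coset $aH$, membership in $p$ can be tested by checking whether the set intersects $aH$.

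For the first part, I would compute
\[
dq(aH) = \{x \in G : x\inv aH \in q\} = \{x \in G : x\inv aH = bH\} = aHb\inv,
\]
which lies in $\cB$ by bi-invariance. Since $aH \in p$, we have $aHb\inv \in p$ iff $aHb\inv \cap aH \in p$ iff $aHb\inv \cap aH \neq \emptyset$. A one-line calculation shows that $aHb\inv \cap aH = a(H \cap Hb\inv)$, which is nonempty iff $b \in H$, i.e., iff $qH = H$. When $b \in H$ we in fact have $aHb\inv = aH \in p$, so $aH \in p\ast q$, giving $(p\ast q)H = aH = pH$; when $b \notin H$ the intersection is empty so $aH \notin p\ast q$, and the unique coset of $H$ in $p\ast q$ is different from $pH$.

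For the second part, assume $H$ is normal, so $G/H$ is a group with $aH \cdot bH = abH$. I want to show $abH \in p \ast q$, which by the same criterion amounts to $dq(abH) \in p$. Computing,
\[
dq(abH) = \{x \in G : x\inv abH = bH\} = \{x \in G : b\inv a\inv xb \in H\},
\]
and normality of $H$ lets me rewrite the last condition as $a\inv x \in bHb\inv = H$, i.e., $x \in aH$. Thus $dq(abH) = aH = pH \in p$, so $abH \in p\ast q$, and uniqueness of the coset in $p\ast q$ forces $(p\ast q)H = abH = pH \cdot qH$.

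No step looks like a serious obstacle: the whole argument is coset bookkeeping, and the only thing to be careful about is that the sets $aHb\inv$ etc.\ genuinely lie in $\cB$ (which follows from bi-invariance of $\cB$, so that Proposition \ref{prop:SBsemi} applies and $p \ast q$ is a genuine element of $S(\cB)$). The first part is where one must resist the temptation to assume $H$ is normal, since the set $aHb\inv$ is not a single coset in general; the point is that one only needs to test whether it meets $aH$, which is a purely set-theoretic check.
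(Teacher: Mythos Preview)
Your proposal is correct and follows essentially the same route as the paper: fix representatives $aH=pH$, $bH=qH$, compute $dq(aH)=aHb\inv$, and test membership in $p$ via intersection with $aH$; then in the normal case compute $dq(abH)=aH\in p$. The paper's argument is slightly terser (it does not spell out the ``iff'' chain or the intersection $a(H\cap Hb\inv)$), but the logic is identical.
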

\begin{proof}
 Fix $a,b\in G$ such that $pH=aH$ and $qH=bH$. Note that $dq(aH)=aHb\inv$. So if $aH\in p\ast q$, then $aHb\inv\in p$, which implies $aHb\inv\cap aH\neq\emptyset$, and thus $b\in H$. Conversely, if $b\in H$ then $dq(aH)=aH\in p$, and so $aH\in p\ast q$. Finally, if $H$ is normal then $dq(abH)=abHb\inv=aH\in p$, and so $abH\in p\ast q$. 
\end{proof}

We now state and prove the main technical result that will allow us to approximate stable sets using finite-index subgroups of $G$.

\begin{proposition}\label{prop:LItop}
Let $\cB$ be a left-invariant sub-algebra of $\cB^{\st}_G$. Then 
\[
\left\{\Sg_{\!aH}(\cB):a\in G,~H\leq^{\cB}_{\fin} G\right\}
\]
is a basis for the topology on $\Sg(\cB)$.
\end{proposition}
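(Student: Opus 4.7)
My strategy is to reduce to producing a neighborhood basis at a single ``identity type'' of $\Sg(\cB)$, and then to build this basis using the profinite group structure of $\Sg(\cB^{\sharp})$. Let $u\in\Sg(\cB^{\sharp})$ denote the idempotent from Theorem \ref{thm:left-inv}(d), and set $u_0:=u\upharpoonright\cB\in\Sg(\cB)$. By Corollary \ref{cor:HSstone}, $\Sg(\cB^{\sharp})$ acts transitively on $\Sg(\cB)$ by homeomorphisms, so every $p_0\in\Sg(\cB)$ has the form $\tilde{p}_0\cdot u_0$ for some $\tilde{p}_0\in\Sg(\cB^{\sharp})$. For each $H\leq^{\cB}_{\fin}G$, I would verify that $\tilde{p}_0\cdot\Sg_H(\cB)=\Sg_{aH}(\cB)$, where $aH$ is the unique coset of $H$ in $\tilde{p}_0$. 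The inclusion $\seq$ is immediate: if $p\in\Sg_H(\cB)$ then $pH=H$, so $dp(aH)=\{x:x\inv aH=H\}=aH$, giving $aH\in\tilde{p}_0\cdot p$ iff $aH\in\tilde{p}_0$. Equality then follows because both sides are clopen with equal $\mu$-mass $1/[G:H]$ (the unique left-invariant measure $\mu$ is invariant under the compact group $\Sg(\cB^{\sharp})$, by extending $G$-invariance through continuity) and $\mu$ is strictly positive on nonempty clopens, being the pushforward of Haar measure on $\Sg(\cB^{\sharp})$. Combined with $u_0\in\Sg_H(\cB)$ for every $H\leq^{\cB}_{\fin}G$ (apply Lemma \ref{lem:cosets} inside $\cB^{\sharp}$ with $p=q=u$, yielding $uH=H$), the problem reduces to showing: for every $A\in\cB$ with $A\in u$, there exists $H\leq^{\cB}_{\fin}G$ with $\Sg_H(\cB)\seq\Sg_{\!A}(\cB)$.

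\textbf{Basis at $u_0$.} Lifting to $\Sg(\cB^{\sharp})$, its profinite group structure supplies a clopen normal subgroup $N$ with $N\seq\Sg_{\!A}(\cB^{\sharp})=\rho\inv(\Sg_{\!A}(\cB))$, where $\rho\colon\Sg(\cB^{\sharp})\to\Sg(\cB)$ is the restriction map. Because $A\in\cB$, the clopen $\Sg_{\!A}(\cB^{\sharp})$ is invariant under right multiplication by $K:=\{q\in\Sg(\cB^{\sharp}):q\upharpoonright\cB=u_0\}$ (the kernel from Corollary \ref{cor:HSstone}); hence, after replacing $N$ by $NK$ (still a clopen subgroup, by normality of $N$) I may assume $K\seq N$. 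Then $N=\rho\inv(\rho(N))$, and $\rho(N)=\Sg_{\!B'}(\cB)$ for some $B'\in\cB$. The continuous anti-homomorphism $\iota\colon G\to\Sg(\cB^{\sharp})$, $g\mapsto gu$, has dense image ($\Sg(\cB^{\sharp})$ being the minimal subflow of $S(\cB^{\sharp})$), and I would use it to identify $\iota\inv(N)=du(B')=\{g\in G:g\inv B'\in u\}$ as a finite-index subgroup of $G$ with $\Sg_{du(B')}(\cB^{\sharp})=N$ (using that $p\ast u=p$ in the group, so $du(B')\in p$ iff $B'\in p$).

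\textbf{Main obstacle.} The remaining step---establishing that this subgroup actually lies in $\cB$, not merely $\cB^{\sharp}$---is where I expect the bulk of the technical work. Lemma \ref{lem:BAstab}(a) only places $du(B')$ in $\cB^{\sharp}$, and ``definability of types'' alone does not force it into $\cB$. My plan is to work instead with the dual set $H:=du^*(B')=\{g\in G:B'g\inv\in u\}$, which Lemma \ref{lem:BAstab}(b) puts in $\cB$, and to use symmetry of forking (Lemma \ref{lem:BAstab}(c)) together with the subgroup property of $N=\Sg_{\!B'}(\cB^{\sharp})$ to show that $H$ is a finite-index subgroup of $G$ satisfying $\Sg_H(\cB)\seq\Sg_{\!B'}(\cB)\seq\Sg_{\!A}(\cB)$. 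Verifying that $H$ inherits closure under multiplication and inverses from the subgroup condition on $N$, using only the stability of $B'$ and the idempotence of $u$, is the main technical challenge I anticipate.
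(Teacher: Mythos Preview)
Your plan is essentially the paper's proof: both reduce to a neighborhood basis at $u_0=u{\upharpoonright}\cB$, lift to $\Sg(\cB^\sharp)$, find a clopen subgroup containing $K$ inside the target open set, write that subgroup as $\Sg_{B'}(\cB^\sharp)$ for some $B'\in\cB$, and then pull back along $g\mapsto gu$. The paper obtains the subgroup $L\supseteq K$ by using that a closed subgroup of a profinite group is the intersection of the open subgroups above it, whereas you take a clopen normal $N$ and enlarge to $NK$; both are fine.

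Your ``main obstacle'' is not an obstacle at all, and the paper resolves it in one line. By Theorem~\ref{thm:EN}$(b)(ii)$ (applied through the isomorphism $S(\cB^\sharp)\cong E(S(\cB^\sharp))$), the idempotent $u$ commutes with the $G$-action: $gu=ug$ for every $g\in G$. Hence
\[
g\in du(B')\ \Longleftrightarrow\ g\inv B'\in u\ \Longleftrightarrow\ B'\in gu=ug\ \Longleftrightarrow\ B'g\inv\in u\ \Longleftrightarrow\ g\in du^*(B'),
\]
so $du(B')=du^*(B')$. Therefore $H:=du^*(B')=\iota\inv(N)$ is automatically a finite-index subgroup of $G$ (preimage of a clopen subgroup under the map $\iota$ with dense image), and it lies in $\cB$ by Lemma~\ref{lem:BAstab}$(b)$. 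No appeal to symmetry of forking or to the multiplicative structure of $N$ is needed beyond this commutation. Two minor corrections: $\iota(g)=gu$ is a \emph{homomorphism}, not an anti-homomorphism (the paper verifies $\sigma(gh)=\sigma(g)\ast\sigma(h)$ using $u\ast(hu)=hu$); and your measure argument for $\tilde{p}_0\cdot \Sg_H(\cB)=\Sg_{aH}(\cB)$, while valid, is replaced in the paper by the direct computation in Lemma~\ref{lem:cosets} applied inside $\cB^\sharp$.
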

\begin{proof}
It suffices to show that for any nonempty open set $U\seq \Sg(\cB)$ and $r\in U$, there is a subgroup $H\leq^{\cB}_{\fin} G$ such that $\Sg_{rH}(\cB)\seq U$. To do this, we will exploit the description of $\Sg(\cB)$ as a profinite homogeneous $\Sg(\cB^\sharp)$-space. So let  $u$ be the identity in $(\Sg(\cB^\sharp), \ast)$ and let $K=\{p\in\Sg(\cB^\sharp):p{\upharpoonright}\cB=u{\upharpoonright}\cB\}$. Then $K$ is a closed subgroup of $\Sg(\cB^\sharp)$, and $\Sg(\cB^\sharp)/K$ is isomorphic to $\Sg(\cB)$, as a homogeneous space, via the map $\pi\colon p\ast K\mapsto p{\upharpoonright}\cB$ (see Corollary \ref{cor:HSstone}). Now, if $A\in\cB$ then $\Sg_{\!A}(\cB^\sharp)$ is $K$-invariant and, moreover, $\pi(\Sg_{\!A}(\cB^\sharp)/K)=\Sg_{\!A}(\cB)$. So this allows us to translate the main goal to $\Sg(\cB^\sharp)/K$. In particular, we want to show that for any nonempty
open set $U\seq \Sg(\cB^\sharp)/K$, and  $r\in\Sg(\cB^\sharp)$ with $r\ast K\in U$, there is some $H\leq^{\cB}_{\fin} G$ such that $\Sg_{rH}/K\seq U$. So fix such $U$ and $r$.

Let $V=\{p\in\Sg(\cB^\sharp):p\ast K\in U\}$ be the pullback of $U$ to $\Sg(\cB^\sharp)$. Then $V$ is open in $\Sg(\cB^\sharp)$, and $r\ast K\seq V$, i.e., $K\seq r\inv\ast V$. Since $\Sg(\cB^\sharp)$ is a profinite group and $K$ is a closed subgroup, it follows that $K$ is the intersection of all open subgroups containing $K$ (see \cite[Proposition 2.1.4]{RZbook}). Since $r\inv \ast V$ is open, and any open subgroup  is also closed, it follows that there is an open subgroup $L$ of $\Sg(\cB^\sharp)$ such that $K\leq L\seq r\inv \ast V$. Note that $L$ has finite index in $\Sg(\cB^\sharp)$ since $\Sg(\cB^\sharp)$ is compact. Altogether, $L$ is a finite-index clopen subgroup of $\Sg(\cB^\sharp)$. 

We will use $L$ to obtain our desired subgroup $H\leq^{\cB}_{\fin} G$. To do this, we need to represent $\Sg(\cB^\sharp)$ as a compactification of $G$. In particular, define $\sigma\colon G\to \Sg(\cB^\sharp)$ such that $\sigma(g)=gu$. Then $\sigma(G)$ is dense in $\Sg(\cB^\sharp)$ since $\Sg(\cB^\sharp)$ is a minimal flow.  Given $g,h\in G$, if we let $p=p^{\cB^\sharp}_g$ be the principal $\cB^\sharp$-type on $g$, then 
\[
\sigma(gh)=ghu=p\ast hu= p\ast u\ast hu=gu\ast hu=\sigma(g)\ast\sigma(h).
\]
So $\sigma$ is a homomorphism.

Since $L/K$ is clopen in $\Sg(\cB^\sharp)/K$, it follows that $L/K=\pi\inv(\Sg_{\!A}(\cB))$ for some $A\in\cB$. Let $H=\sigma\inv(L)$. Then $H$ is a finite-index subgroup of $G$. We show that $H=\{g\in G:A\in gu\}$. First, if $g\in H$, then $gu\ast K=p\ast K$ for some $p\in \Sg_{\!A}(\cB^\sharp)$, and so $A\in gu$ since $gu{\upharpoonright}\cB=p{\upharpoonright}\cB$. Conversely, suppose $A\in gu$. Then $gu\in \Sg_{\!A}(\cB^\sharp)$, and so $gu\ast K\in L/K$. Therefore $gu\ast K=p\ast K$ for some $p\in L$, and thus $p\inv\ast gu\in K\leq L$. So $gu\in L$, i.e., $g\in H$. 

By Theorem \ref{thm:EN}$(b)(ii)$, $gu=ug$ for any $g\in G$. Using the notation of Lemma \ref{lem:BAstab}, we now have $H=\{g\in G:Ag\inv\in u\}=du^*(A)$. So $H\in\cB$ by Lemma \ref{lem:BAstab}$(b)$. 

Next, we show that $\Sg_{\!H}(\cB^\sharp)\seq L$. Suppose $\Sg_{\!H}(\cB^\sharp)\backslash L$ is nonempty. Since $\Sg_{\!H}(\cB^\sharp)\backslash L$ is open, and $\sigma(G)$ is dense in $\Sg(\cB^\sharp)$, there is some $g\in G$ such that $gu\in \Sg_{\!H}(\cB^\sharp)\backslash L$. Since $gu\in \Sg_{\!H}(\cB^\sharp)$, we have $H\in gu$, and so $gH=H$ by Lemma \ref{lem:cosets}. Since $H=\sigma\inv(L)$, we have $gu\in L$, which is a contradiction. 

We now have $\Sg_{\!H}(\cB^\sharp)\seq L\seq r\inv\ast V$. By Lemma \ref{lem:cosets}, we also have $K\seq \Sg_{\!H}(\cB^\sharp)$ and $r\ast \Sg_{\!H}(\cB^\sharp)=\Sg_{rH}(\cB^\sharp)$. So $\Sg_{rH}(\cB^\sharp)\seq V$, and thus $\Sg_{rH}(\cB^\sharp)/K\seq U$. 
\end{proof}

We again let $\mu$ denote the unique bi-invariant probability measure on $\cB^{\st}_G$. The following is our main result on the approximate structure of stable subsets of $G$.

\begin{corollary}\label{cor:structure}
Let $\cB$ be a left-invariant sub-algebra of $\cB^{\st}_G$, and fix $A\in\cB$. Then there is a finite-index subgroup $H\leq G$, which is in $\cB$, and a set $Y\seq G$, which is a union of left cosets of $H$, such that $\mu(A\smd Y)=0$. Consequently,  if $\cC$ is the set of left cosets of $H$ contained in $Y$, then $\mu(A)=\mu(Y)=|\cC|/[G:H]$. 
\end{corollary}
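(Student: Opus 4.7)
The plan is to exploit Proposition~\ref{prop:LItop} to cover the clopen set $\Sg_{\!A}(\cB)$ by basic open sets of the form $\Sg_{\!aH}(\cB)$, use compactness to reduce to a finite cover, and then collapse the finitely many subgroups appearing in the cover into a single finite-index subgroup $H \in \cB$ by taking intersections. First I would note that $\Sg_{\!A}(\cB)$ is clopen in $\Sg(\cB)$, so by Proposition~\ref{prop:LItop} and compactness of $\Sg(\cB)$ it can be written as a finite union
\[
\Sg_{\!A}(\cB) = \bigcup_{i=1}^n \Sg_{\!a_i H_i}(\cB)
\]
for some $a_i \in G$ and $H_i \leq^{\cB}_{\fin} G$. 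Then I would set $H = H_1 \cap \cdots \cap H_n$, which is still a finite-index subgroup of $G$, and lies in $\cB$ since $\cB$ is closed under finite intersections. Because $H \leq H_i$, each coset $a_i H_i$ is a finite disjoint union of left cosets of $H$, so if $Y$ denotes the union of all left cosets of $H$ appearing inside any $a_i H_i$, then $Y$ is a union of left cosets of $H$ with $Y \in \cB$, and by additivity of types on disjoint unions one gets $\Sg_{\!Y}(\cB) = \bigcup_i \Sg_{\!a_i H_i}(\cB) = \Sg_{\!A}(\cB)$.

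Next I would translate this equality of type spaces into the vanishing of the symmetric difference's measure. Since $\Sg_{\!A}(\cB) = \Sg_{\!Y}(\cB)$, we get $\Sg_{\!A \smd Y}(\cB) = \Sg_{\!A \setminus Y}(\cB) \cup \Sg_{\!Y \setminus A}(\cB) = \emptyset$. Because $\Sg(\cB)$ is nonempty (Theorem~\ref{thm:left-inv}(c)), Corollary~\ref{cor:NewSB} applies to $\cB$, and its condition $(iv)$ then forces $A \smd Y$ to be non-generic in $G$. By Theorem~\ref{thm:gensets}(d), non-genericity is equivalent to $\mu(A \smd Y) = 0$.

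Finally, the consequence follows from routine finite additivity: since $\mu(A \setminus Y), \mu(Y \setminus A) \leq \mu(A \smd Y) = 0$, we have $\mu(A) = \mu(A \cap Y) = \mu(Y)$, and each left coset of $H$ has measure $1/[G:H]$ by left-invariance of $\mu$ together with additivity on the partition of $G$ into the $[G:H]$ cosets of $H$, so $\mu(Y) = |\cC|/[G:H]$.

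The main obstacle is the initial step of replacing the finitely many subgroups $H_1,\ldots,H_n$ by a single $H$ while keeping $Y$ a union of left cosets of $H$; this requires Proposition~\ref{prop:LItop} together with the observation that cosets of $H_i$ decompose cleanly as unions of cosets of any smaller subgroup $H \leq H_i$. Everything else is a straightforward combination of compactness, the generic-type criterion of Corollary~\ref{cor:NewSB}(iv), the measure criterion for genericity in Theorem~\ref{thm:gensets}(d), and finite additivity.
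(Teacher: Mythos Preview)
Your proof is correct and follows essentially the same approach as the paper: cover $\Sg_{\!A}(\cB)$ by finitely many basic opens $\Sg_{\!a_iH_i}(\cB)$ via Proposition~\ref{prop:LItop} and compactness, intersect the $H_i$ to a single $H\leq^{\cB}_{\fin}G$, set $Y=\bigcup_i a_iH_i$, and deduce $\mu(A\smd Y)=0$ from $\Sg_{\!A\smd Y}(\cB)=\emptyset$. The only cosmetic difference is that the paper invokes Corollary~\ref{cor:gentype} directly to rule out genericity of $A\smd Y$, whereas you route through Corollary~\ref{cor:NewSB}$(iv)$; these are equivalent.
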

\begin{proof}
By Proposition \ref{prop:LItop}, there are $H_1,\ldots,H_n\leq^{\cB}_{\fin} G$ and $g_1,\ldots,g_n\in G$ such that 
\[
\Sg_{\!A}(\cB)=\Sg_{\!g_1H_1}(\cB)\cup\ldots\cup \Sg_{\!g_nH_n}(\cB)=\Sg_{\!g_1H_1\cup\ldots\cup  g_nH_n}(\cB).
\]
Let $H=H_1\cap\ldots\cap H_n$ and let $Y=g_1H_1\cup\ldots\cup g_nH_n$. Then $H\leq^{\cB}_{\fin} G$, $Y$ is a union of left cosets of $H$, and $\Sg_{\!A}(\cB)=\Sg_{Y}(\cB)$. If $A\smd Y$ is generic then, by Corollary \ref{cor:gentype},  there is some $p\in\Sg(\cB)$ such that $A\smd Y\in p$, which  contradicts $\Sg_{\!A}(\cB)=\Sg_{Y}(\cB)$. So $A\smd Y$ is not generic, and thus $\mu(A\smd Y)=0$ by Theorem \ref{thm:gensets}$(d)$. The remaining claims follow from  invariance and finite additivity of $\mu$. 
\end{proof}

\subsection{A note on $k$-stable sets}
We say that $A\seq G$ is \textbf{$k$-stable} if the relation $xy\in A$ on $G\times G$ is $k$-stable (see Definition \ref{def:RBA}). Using similar methods as in Fact \ref{fact:BSTG}, one can show that the collection of subsets of $G$ that are $k$-stable for some $k\geq 1$ forms a bi-invariant sub-algebra of $\cB^{\st}_G$, which includes all subgroups of $G$. (In fact, a nonempty subset of a group is $2$-stable if and only if it is a coset of a subgroup.) The next example demonstrates that the $k$-stable sets may form a proper sub-algebra.

\begin{example}
Let $G$ be the group of integers $(\Z,+)$. For each $n\geq 1$, choose an $n$-term arithmetic progression $A_n\seq\mathbb{Z}$, with common difference $n$, so that $\min A_n>\max A_{n-1}+n^2$. Let $A=\bigcup_{n=1}^\infty A_n$. Then $A$ is stable but not $k$-stable for any $k\geq 1$. (We leave this as an exercise for the reader.)
\end{example}
On the other hand, stable sets are ``close" to $k$-stable sets.

\begin{corollary}\label{cor:stabCR}
For any stable $A\seq G$ there is $Y\seq G$ such that $\mu(A\smd Y)=0$ and $Y$ is $k$-stable for some $k\geq 1$.
\end{corollary}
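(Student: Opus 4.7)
The plan is to derive this directly from Corollary \ref{cor:structure}: that corollary already produces an approximating $Y$ which is a finite union of left cosets of a finite-index subgroup, and I claim any such $Y$ is automatically $k$-stable for some $k$. So the statement splits cleanly into an ``invocation'' step and a ``pigeonhole'' step.

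For the first step, I would pick a left-invariant Boolean sub-algebra $\cB\seq\cB^{\st}_G$ containing $A$ (for example, the sub-algebra generated by the left translates of $A$, which lies in $\cB^{\st}_G$ by Fact \ref{fact:BSTG}). Corollary \ref{cor:structure} then hands me a finite-index subgroup $H\leq^{\cB}_{\fin} G$ and a set $Y\seq G$ which is a union of left cosets of $H$ satisfying $\mu(A\smd Y)=0$. This gives the approximation clause immediately, so the task reduces to verifying that this particular $Y$ is $k$-stable.

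For the second step, set $n=[G:H]$ and consider the relation $\varphi(x,y):=(xy\in Y)$ on $G\times G$. The first-variable fibers are $\varphi(a,G)=a\inv Y$, and since left multiplication permutes the cosets of $H$, each fiber $a\inv Y$ is again a union of left cosets of $H$. Hence there are at most $2^{n}$ distinct first-variable fibers as $a$ ranges over $G$. The finitary claim to record is then: a relation with at most $N$ distinct first-variable fibers cannot code a linear order of length $N+1$. Indeed, if $(a_i)_{i=1}^{k}$ and $(b_j)_{j=1}^{k}$ witness such a coding and $\varphi(a_i,G)=\varphi(a_j,G)$ for some $i<j$, evaluating both fibers at $b_i$ yields the contradiction ``true iff false''. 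So the fibers $\varphi(a_i,G)$ are pairwise distinct, forcing $k\leq 2^n$, and $Y$ is $(2^n+1)$-stable.

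There is essentially no obstacle here once Corollary \ref{cor:structure} is in hand; the only conceptual point is that the stability of a union of cosets is a finitary phenomenon controlled entirely by $[G:H]$, not by any deeper property of $G$. In particular, the argument gives the explicit bound $k\leq 2^{[G:H]}+1$, though the corollary only asserts existence of some $k$.
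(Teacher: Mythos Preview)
Your proposal is correct and follows essentially the same route as the paper: invoke Corollary~\ref{cor:structure} to get $Y$ as a union of left cosets of some $H\leq^{\cB}_{\fin}G$, then argue that any such $Y$ is $k$-stable. The only difference is quantitative: the paper cites \cite[Lemma~1.5]{SandersSR} for the sharper bound $k\leq [G:H]+1$ (obtained by pigeonholing the cosets $b_jH$ directly), whereas your fiber-counting argument gives $k\leq 2^{[G:H]}+1$; since the corollary only asks for existence of some $k$, either suffices.
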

\begin{proof}
By Corollary \ref{cor:structure}, we have $Y\seq G$, which is a union of cosets of a fixed finite-index subgroup $H\leq G$, such that $\mu(A\smd Y)=0$. It is easy to show that $Y$ is $k$-stable for some $k\leq [G:H]+1$ (see, e.g., \cite[Lemma 1.5]{SandersSR}).
\end{proof}

\subsection{Profinite completions}
In this subsection, we explicitly describe $\Sg(\cB)$ as a projective limit of finite coset spaces, where $\cB\seq\cB^{\st}_G$ is a left-invariant sub-algebra.

\begin{definition}\label{def:projlim}
Given a Boolean algebra $\cB\seq\cP(G)$,  define $\hat{G}_{\cB}=\varprojlim_{H\leq^{\cB}_{\fin} G}G/H$.
\end{definition}

We first make some observations on the bi-invariant case. Suppose $\cB\seq\cP(G)$ is a bi-invariant Boolean algebra, and let $\cN$ be the collection of all finite-index \emph{normal} subgroups of $G$ in $\cB$. Then $\cN$ is co-initial in the the collection of all $H\leq^{\cB}_{\fin} G$, and so $\hat{G}_{\cB}$ is a profinite \emph{group} isomorphic to $\varprojlim_{N\in\cN}G/N$ (see, e.g., \cite[Lemma 1.19]{RZbook}). For example, note that if $\cB$ contains all finite-index normal subgroups of $G$, then $\hat{G}_{\cB}$ is the classical profinite completion of $G$. See \cite[Section 3.2]{RZbook} for details.

\begin{corollary}\label{cor:projlim}
Let $\cB$ be a left-invariant sub-algebra of $\cB^{\st}_G$.
\begin{enumerate}[$(a)$]
\item  If  $\cB$ is bi-invariant then $\Sg(\cB)$ and $\hat{G}_{\cB}$ are isomorphic profinite groups.
\item $\Sg(\cB)$ and $\hat{G}_{\cB}$ are isomorphic homogeneous $\hat{G}_{\cB^\sharp}$-spaces. 
\end{enumerate}
\end{corollary}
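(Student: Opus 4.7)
The plan is to build a natural homeomorphism $\Psi : \Sg(\cB) \to \hat{G}_{\cB}$ sending each generic type to its compatible family of cosets, and then upgrade $\Psi$ to the appropriate algebraic isomorphism in each case. Define $\Psi(p) = (pH)_{H \leq^{\cB}_{\fin} G}$; compatibility with the inverse system is automatic since $H_1 \leq H_2$ forces $pH_1 \seq pH_2$, and each coordinate map $p \mapsto pH$ is continuous because it has finite image $G/H$ with clopen fibers $\Sg_{\!aH}(\cB)$. Injectivity of $\Psi$ comes from Proposition \ref{prop:LItop}, which exhibits the $\Sg_{\!aH}(\cB)$ as a basis for $\Sg(\cB)$. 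For surjectivity, given a compatible $(a_H H)_H \in \hat{G}_{\cB}$, the family $\{a_H H\}$ is closed under finite intersections (one checks $a_{H_1}H_1 \cap a_{H_2}H_2 = a_{H_1 \cap H_2}(H_1 \cap H_2)$ from compatibility) and every $a_H H$ is generic since $[G:H] < \infty$, so Corollary \ref{cor:gentype} supplies a generic type $p$ containing the whole family, with $\Psi(p) = (a_H H)_H$. Compactness of $\Sg(\cB)$ then upgrades $\Psi$ to a homeomorphism.

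For part $(a)$, when $\cB$ is bi-invariant each $H \leq^{\cB}_{\fin} G$ contains its normal core $N := \bigcap_{g \in G} gHg\inv$, which is a finite intersection of conjugates (all in $\cB$ by bi-invariance) and hence lies in $\cB$; so finite-index normal subgroups in $\cB$ are cofinal, and $\hat{G}_{\cB}$ inherits the structure of a profinite group. Lemma \ref{lem:cosets} gives $(p \ast q)N = pN \cdot qN$ for such normal $N$, so $\Psi$ intertwines multiplication on a cofinal set of coordinates and is therefore a profinite group isomorphism.

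For part $(b)$, apply part $(a)$ to $\cB^\sharp$ to get $\Psi^\sharp : \Sg(\cB^\sharp) \to \hat{G}_{\cB^\sharp}$ as profinite groups. The action of $\hat{G}_{\cB^\sharp}$ on $\hat{G}_{\cB}$ is defined coordinate by coordinate: for $H \leq^{\cB}_{\fin} G$, the normal core $N_H := \bigcap_{g \in G} gHg\inv$ is a finite-index normal subgroup in $\cB^\sharp$ (by bi-invariance of $\cB^\sharp$), the natural $G$-action on $G/H$ factors through $G/N_H$, and hence extends via the projection $\hat{G}_{\cB^\sharp} \twoheadrightarrow G/N_H$. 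By Corollary \ref{cor:HSstone}, the $\Sg(\cB^\sharp)$-action on $\Sg(\cB)$ is $(p,q) \mapsto f_p(q)$. To verify equivariance of $\Psi$, I would exploit density of the $G$-orbit $\{gu : g \in G\}$ in the minimal flow $\Sg(\cB^\sharp)$, where $u$ is the identity: Lemma \ref{lem:HSgen} gives $\Phi(u)(q) = q$, so $f_{gu}(q) = g \cdot q$, while $\Psi^\sharp(gu) = (gN)_N$ by the part-$(a)$ isomorphism, and the required identity $\Psi(gq) = \Psi^\sharp(gu) \cdot \Psi(q) = (g \cdot qH)_H$ is immediate. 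Both sides of the equivariance equation are continuous in $p$, so the identity extends from the dense orbit to all of $\Sg(\cB^\sharp)$.

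The main obstacle is the equivariance in part $(b)$: because $H \leq^{\cB}_{\fin} G$ need not be normal when $\cB$ is merely left-invariant, one must first route the $\hat{G}_{\cB^\sharp}$-action through the cores $N_H \in \cB^\sharp$, and then check that this externally constructed action matches the Ellis-semigroup action $f_p(q)$; the density reduction to $\{gu\}$ is what keeps the final comparison transparent.
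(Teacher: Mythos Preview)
Your proposal is correct and follows essentially the same approach as the paper: define $\Psi(p)=(pH)_H$, use Proposition \ref{prop:LItop} for injectivity, Corollary \ref{cor:gentype} for surjectivity, and Lemma \ref{lem:cosets} for the group structure in part $(a)$. The only substantive difference is in part $(b)$: the paper identifies $\hat{G}_{\cB}$ explicitly as a quotient $\hat{G}_{\cB^\sharp}/K$ (via the subgroups $H_N=\bigcap\{H\in\cH:N\leq H\}$ and $K=\varprojlim_{\cN}H_N/N$), whereas you define the $\hat{G}_{\cB^\sharp}$-action coordinatewise through the normal cores $N_H$ and verify equivariance by density of $\{gu:g\in G\}$. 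Both routes work; the paper's quotient description gives a cleaner structural picture of $\hat{G}_{\cB}$ as a homogeneous space, while your density argument is more direct and avoids introducing the auxiliary $H_N$'s.
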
 
\begin{proof}
We first show that $\Sg(\cB)$ and $\hat{G}_{\cB}$ are homeomorphic. Let $\cH$ denote the collection of all $H\leq^{\cB}_{\fin} G$. Define $\tau\colon \Sg(\cB)\to\hat{G}_{\cB}$ such that $\tau(p)=(pH)_{H\in\cH}$. It is straightforward to check that $\tau$ is continuous. Moreover, $\tau$ is injective by Proposition \ref{prop:LItop} and surjective by Corollary \ref{cor:gentype}.

Now suppose for a moment that $\cB$ is bi-invariant, and let $\cN$ be the collection of all \emph{normal} $H\in\cH$. As indicated above, any element of $\hat{G}_{\cB}$ is completely determined by its restriction to $\cN$, and so we can identify $\hat{G}_{\cB}$ with the profinite group $\varprojlim_{N\in\cN} G/N$. In this case, $\tau$ is a group isomorphism by Lemma \ref{lem:cosets}. 

Finally, we return to the case that $\cB$ is only left-invariant. By the previous arguments and Corollary \ref{cor:HSstone},  $\Sg(\cB)$ is a homogeneous $\hat{G}_{\cB^\sharp}$-space. Now redefine $\cN$ to be the collection of all finite-index normal subgroups of $G$ in $\cB^\sharp$. Given $N\in\cN$, let $H_N=\bigcap\{H\in\cH:N\leq H\}$, and note that $H_N\in\cH$. For any $H\in\cH$, if $N=\bigcap_{g\in G}gHg\inv$, then $N\in\cN$ and $H_N\leq H$. So any element of $\hat{G}_{\cB}$ is completely determined by its restriction to $\{H_N:N\in\cN\}$. Altogether, $\hat{G}_{\cB}$ can be identified with $\varprojlim_{\cN} G/H_N$, which is a homogeneous $\hat{G}_{\cB^\sharp}$-space isomorphic to  $\hat{G}_{\cB^\sharp}/K$, where $K=\varprojlim_{\cN} H_N/N$. Finally, one checks that the homeomorphism $\tau\colon \Sg(\cB)\to \hat{G}_{\cB^\sharp}$ is an isomorphism of homogeneous $\hat{G}_{\cB^\sharp}$-spaces.  
\end{proof}

Next we make some remarks motivated by certain notions from the model theory of groups. This will also be in preparation for the results in Section \ref{sec:CC}.  

\begin{definition}\label{def:G0}
Given a Boolean algebra $\cB\seq\cP(G)$, define $G^0_{\!\cB}$ to be the intersection of all finite-index subgroups of $G$ that are in $\cB$.
\end{definition}

Note that if $\cB\seq\cP(G)$ is a bi-invariant Boolean algebra, then $G^0_{\!\cB}$ coincides with the intersection of all finite-index \emph{normal} subgroups of $G$ that are in $\cB$ (in particular, $G^0_{\!\cB}$ is a normal subgroup of $G$). 

\begin{remark}\label{rem:global}
Suppose $\cB$ is a bi-invariant sub-algebra of $\cP(G)$, and let $\cN$ denote the collection of all finite-index normal subgroups of $G$ in $\cB$. Then we have a canonical homomorphism $\rho\colon G\to\hat{G}_{\cB}$ such that $\rho(a)=(aN)_{N\in\cN}$. Note that $\ker\rho=G^0_{\!\cB}$, and so $G/G^0_{\!\cB}$ embeds as a dense subgroup of $\hat{G}_{\cB}$ via the induced quotient map $\rho^*$. The coarsest topology on $G$ that makes $\rho$ continuous is the \emph{$\cB$-profinite topology}, whose basic open sets are cosets of $N\in\cN$. Note that $G^0_{\!\cB}$ is closed in this topology and, moreover, the topology on $G/G^0_{\!\cB}$ induced from $\hat{G}_{\cB}$ by $\rho^*$  coincides with the quotient of the $\cB$-profinite topology. In fact, $\rho^*\colon G/G^0_{\!\cB}\to \hat{G}_{\cB}$ is a homeomorphism  if and only if the $\cB$-profinite topology on $G$ is compact. Now suppose $\cB\seq\cB^{\st}_G$. Then $\rho=\tau\sigma$, where $\sigma\colon G\to\Sg(\cB)$ is as in the proof of Proposition \ref{prop:LItop} and $\tau$ is as in the proof of Corollary \ref{cor:projlim}. So $G/G^0_{\!\cB}$ embeds as a dense subgroup of $\Sg(\cB)$ via $\sigma^*:=\tau\inv\rho^*$. Once again, $\sigma^*\colon G/G^0_{\!\cB}\to \Sg(\cB)$ is a homeomorphism if and only if the $\cB$-profinite topology on $G$ is compact. 

An example from model theory  is when $G$ is definable (say, over $\emptyset$) in some model $M$ of a stable theory $T$, and $\cB$ is the Boolean algebra of definable subsets of $G$. Then $G^0_{\cB}$ is an intersection of at most $|T|$ groups in $\cN$ (see \cite[Chapter 5]{PoStG}; this also follows from Theorem \ref{thm:sif} below). Now suppose $M$ is $|T|^+$-saturated. Then the $\cB$-profinite topology on $G$ has a basis of cardinality at most $|T|$, and hence is compact.  So $G/G^0_{\!\cB}$ is homeomorphic to $\Sg(\cB)$ and $\hat{G}_{\cB}$.
\end{remark}

\begin{definition}
Let $\cB\seq\cP(G)$ be a left-invariant Boolean algebra. Given $p\in S(\cB)$, define $\Stab(p)=\{g\in G:gp=p\}$. 
\end{definition}

\begin{corollary}\label{cor:stabGl}
Suppose $\cB$ is a left-invariant sub-algebra of $\cB^{\st}_G$, and $p\in\Sg(\cB)$. Given $H\leq^{\cB}_{\fin} G$, let $H^p=aHa\inv$, where $aH=pH$. Then $\Stab(p)=\bigcap_{H\leq^{\cB}_{\fin}G}H^p$. Moreover, if $\cB$ is bi-invariant then $\Stab(p)=G^0_{\!\cB}$. 
\end{corollary}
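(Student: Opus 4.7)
The plan is to prove the equality $\Stab(p)=\bigcap_{H\leq^{\cB}_{\fin}G}H^p$ by two inclusions, leveraging Proposition \ref{prop:LItop} for the nontrivial direction, and then reduce the bi-invariant case to a simple bookkeeping observation about conjugation of finite-index subgroups in $\cB$.

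For the inclusion $\Stab(p)\seq\bigcap_H H^p$, I would fix $g\in\Stab(p)$ and $H\leq^{\cB}_{\fin}G$, and write $pH=aH$. Since $gp=p$ and $aH\in p$, also $aH\in gp$, i.e.\ $g\inv aH\in p$. But $p$ contains a unique left coset of $H$, so $g\inv aH=aH$, which gives $a\inv ga\in H$ and hence $g\in aHa\inv=H^p$.

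For the reverse inclusion, fix $g\in\bigcap_H H^p$. The $G$-action preserves $\Sg(\cB)$ since it is a subflow (Theorem \ref{thm:left-inv}$(c)$), so both $p$ and $gp$ lie in $\Sg(\cB)$, and the previous calculation in reverse shows $(gp)H=gaH=aH=pH$ for every $H\leq^{\cB}_{\fin}G$ with $pH=aH$. Suppose toward a contradiction that $gp\neq p$. Then some $A\in\cB$ lies in exactly one of $p,gp$, so the clopen sets $\Sg_{\!A}(\cB)$ and $\Sg(\cB)\setminus\Sg_{\!A}(\cB)=\Sg_{G\setminus A}(\cB)$ separate $p$ from $gp$ in $\Sg(\cB)$. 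By Proposition \ref{prop:LItop}, there exist $b\in G$ and $K\leq^{\cB}_{\fin}G$ such that $\Sg_{bK}(\cB)$ contains exactly one of $p,gp$. But $pK=(gp)K$ forces $bK\in p\Leftrightarrow bK\in gp$, a contradiction. Hence $gp=p$ and $g\in\Stab(p)$.

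For the moreover clause, assume $\cB$ is bi-invariant. Then for any $H\leq^{\cB}_{\fin}G$ and $a\in G$, the conjugate $aHa\inv=(aH)a\inv$ is again a finite-index subgroup lying in $\cB$, and conjugation by $a$ is a bijection on $\{H:H\leq^{\cB}_{\fin}G\}$. Consequently $\{H^p:H\leq^{\cB}_{\fin}G\}=\{H:H\leq^{\cB}_{\fin}G\}$, so $\Stab(p)=\bigcap_H H^p=\bigcap_H H=G^0_{\!\cB}$ by Definition \ref{def:G0}. The only step requiring real input is the reverse inclusion in the main statement, where Proposition \ref{prop:LItop} is essential: without knowing that the finite-index-coset clopens form a basis, the coset data $\{pH\}_{H}$ need not determine $p$.
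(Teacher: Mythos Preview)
Your argument for the main equality $\Stab(p)=\bigcap_H H^p$ is correct and follows the paper's approach: both use Proposition \ref{prop:LItop} to see that $gp=p$ is equivalent to $(gp)H=pH$ for all $H\leq^{\cB}_{\fin}G$, and then observe this is equivalent to $g\in H^p$. Your contradiction argument simply spells out the fact that the coset-clopens form a basis, so two generic types agreeing on every $H$ must be equal.

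For the moreover clause there is a gap in the justification. You correctly note that for any fixed $a\in G$, conjugation by $a$ is a bijection on $\{H:H\leq^{\cB}_{\fin}G\}$, and then conclude $\{H^p:H\}=\{H:H\}$. But the map $H\mapsto H^p$ is \emph{not} conjugation by a single element: the conjugating element $a_H$ satisfies $a_H H=pH$ and varies with $H$, so bijectivity of each individual conjugation does not yield surjectivity of $H\mapsto H^p$. The inclusion $\{H^p\}\subseteq\{K:K\leq^{\cB}_{\fin}G\}$ is immediate from bi-invariance and already gives $G^0_{\!\cB}\subseteq\Stab(p)$, but for $\Stab(p)\subseteq G^0_{\!\cB}$ you need the reverse inclusion of families. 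This does hold, though for a different reason: given $K\leq^{\cB}_{\fin}G$, bi-invariance puts every right coset of $K$ in $\cB$, so there is a unique $a$ with $Ka\in p$; taking $H=a\inv Ka$ one has $aH=Ka\in p$, hence $pH=aH$ and $H^p=aHa\inv=K$. The paper instead invokes the observation after Definition \ref{def:G0} that in the bi-invariant case $G^0_{\!\cB}$ is already the intersection of the finite-index \emph{normal} subgroups in $\cB$; since $N^p=N$ whenever $N$ is normal, one obtains $\Stab(p)\subseteq\bigcap_N N^p=\bigcap_N N=G^0_{\!\cB}$ without needing to analyze the map $H\mapsto H^p$ at all.
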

\begin{proof}
Note that the second claim follows from the first by the observation made after Definition \ref{def:G0}. For the first claim, fix $p\in\Sg(\cB)$ and $g\in G$. By Proposition \ref{prop:LItop}, $g\in\Stab(p)$ if and only if $(gp)H=pH$ for all $H\leq^{\cB}_{\fin} G$. Also, if $H\leq^{\cB}_{\fin} G$ then $(gp)H=pH$ if and only if $g\in H^p$.
\end{proof}

\section{Connected components}\label{sec:CC}

Let $G$ be a group. As usual, we let $\mu$ denote the unique bi-invariant probability measure on $\cB^{\st}_G$. In this section, we analyze left-invariant sub-algebras  of $\cB^{\st}_G$ that contain a \emph{smallest} finite-index subgroup of $G$.
Recall that for a Boolean algebra  $\cB\seq\cP(G)$, $G^0_{\!\cB}$ denotes intersection of all finite-index subgroups of $G$ in $\cB$. 

\begin{lemma}\label{lem:VCeqs}
Let $\cB\seq\cP(G)$ be a Boolean algebra. The following are equivalent.
\begin{enumerate}[$(i)$]
\item $\cB$ contains only finitely many finite-index subgroups of $G$.
\item $\cB$ contains a smallest finite-index subgroup of $G$.
\item $G^0_{\!\cB}$ is the smallest finite-index subgroup of $G$ in $\cB$.
\item $G^0_{\!\cB}$ has finite index in $G$.
\end{enumerate}
\end{lemma}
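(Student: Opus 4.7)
My plan is to prove the equivalences via the cycle $(iii) \Rightarrow (ii) \Rightarrow (iii) \Rightarrow (iv) \Rightarrow (i) \Rightarrow (iii)$, which bundles the trivial implications together. The key structural fact I will use repeatedly is that, by definition, every finite-index subgroup of $G$ in $\cB$ automatically contains $G^0_{\!\cB}$ (since $G^0_{\!\cB}$ is the intersection of all such subgroups).

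The implication $(iii) \Rightarrow (ii)$ is immediate. For $(ii) \Rightarrow (iii)$, if $H \in \cB$ is the smallest finite-index subgroup of $G$ in $\cB$, then $H$ is one of the subgroups being intersected in the definition of $G^0_{\!\cB}$, giving $G^0_{\!\cB} \subseteq H$; conversely, minimality gives $H \subseteq K$ for every finite-index $K \in \cB$, so $H \subseteq G^0_{\!\cB}$, and hence $H = G^0_{\!\cB}$. The step $(iii) \Rightarrow (iv)$ is trivial.

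For $(iv) \Rightarrow (i)$, note that every finite-index subgroup of $G$ in $\cB$ contains $G^0_{\!\cB}$ by definition, so the map $H \mapsto H/G^0_{\!\cB}$ is an injection from the collection of such subgroups into the set of subgroups of $G/G^0_{\!\cB}$. Under hypothesis $(iv)$ the latter is finite, so $(i)$ follows. For $(i) \Rightarrow (iii)$, enumerate the finite-index subgroups of $G$ in $\cB$ as $H_1,\dots,H_n$; then $G^0_{\!\cB} = H_1 \cap \cdots \cap H_n$, which lies in $\cB$ since $\cB$ is a Boolean algebra (closed under finite intersections), and has finite index in $G$ as the intersection of finitely many finite-index subgroups. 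So $G^0_{\!\cB}$ is itself a finite-index subgroup of $G$ in $\cB$, and by the defining intersection it is contained in every other such subgroup, yielding $(iii)$.

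There is really no serious obstacle here; the only substantive point is making sure to use that $\cB$ is a Boolean algebra (hence closed under finite intersections) in the implication $(i) \Rightarrow (iii)$, so that the witnessing subgroup $G^0_{\!\cB}$ actually belongs to $\cB$. Everything else is a bookkeeping consequence of the definition of $G^0_{\!\cB}$ as an intersection.
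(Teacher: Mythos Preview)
Your proof is correct and follows essentially the same approach as the paper. The paper runs the cycle $(i)\Rightarrow(ii)\Rightarrow(iii)\Rightarrow(iv)\Rightarrow(i)$, declaring the first three implications trivial and giving the same coset-counting observation for $(iv)\Rightarrow(i)$; your reorganization into $(ii)\Leftrightarrow(iii)$ together with $(iii)\Rightarrow(iv)\Rightarrow(i)\Rightarrow(iii)$ just unpacks those ``trivial'' steps more explicitly, with the identical key idea that every finite-index subgroup in $\cB$ contains $G^0_{\!\cB}$.
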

\begin{proof}
The following implications are trivial: $(i)\Rightarrow(ii)\Rightarrow(iii)\Rightarrow(iv)$. For $(iv)\Rightarrow (i)$, note that any subgroup $H\leq^{\cB}_{\fin} G$ is a union of cosets of $G^0_{\!\cB}$. 
\end{proof}

As previously stated, the focus of this section will be on Boolean algebras satisfying the equivalent conditions of the previous lemma. So, for the sake of brevity, we will usually invoke these conditions using the formulation in $(iv)$.

\begin{definition}
Let $\cB$ be a left-invariant sub-algebra of $\cB^{\st}_G$. Given $X\in\cB$, define $\Stab_\mu(X)=\{g\in G:\mu(gX\smd X)=0\}$.
\end{definition}

\begin{remark}\label{rem:StabmuX}
Note that in the context of the previous definition, $\Stab_\mu(X)$ is a subgroup of $G$. Moreover, by Theorem \ref{thm:gensets}$(d)$ and Corollary \ref{cor:gentype}, we have
\[
\Stab_\mu(X)=\{g\in G:gX\smd X\text{ is not generic}\}=\bigcap_{p\in\Sg(\cB)}\{g\in G:gX\smd X\not\in p\}.
\]
\end{remark}

The following is the main result of this section.

\begin{theorem}\label{thm:local}
Let $\cB$ be a left-invariant sub-algebra of $\cB^{\st}_G$, and suppose $G^0_{\!\cB}$ has finite index in $G$.
\begin{enumerate}[$(1)$]

\item \textnormal{(Generic $\cB$-types and their stabilizers)}
\begin{enumerate}[$(a)$]
\item The map $p\mapsto pG^0_{\!\cB}$ is an action-preserving bijection from $S^g(\cB)$ to $G/G^0_{\!\cB}$. 
\item If $p\in \Sg(\cB)$  then $\Stab(p)=aG^0_{\!\cB}a\inv$ for some/any $a\in pG^0_{\!\cB}$.
\end{enumerate}
\vspace{3pt}

\item \textnormal{(Structure for sets in $\cB$)} If $X\in\cB$ then, for any left coset $C$ of $G^0_{\!\cB}$, either $\mu(C\cap X)=0$ or $\mu(C\backslash X)=0$. Moreover, if $\cC$ is the set of cosets $C$ of $G^0_{\!\cB}$ such that $\mu(C\cap X)>0$, and $Y=\bigcup\cC$, then $\mu(X)=\mu(Y)=|\cC|/[G:G^0_{\!\cB}]$.
\vspace{3pt}

\item \textnormal{(Stabilizers of sets in $\cB$)} If $X\in\cB$ then $\Stab_\mu(X)$ is a finite-index subgroup of $G$ in $\cB^\sharp$. Moreover,
\[
\textstyle G^0_{\!\cB^\sharp}=\bigcap_{a\in G}aG^0_{\!\cB} a\inv=\bigcap_{p\in\Sg(\cB)}\Stab(p)=\bigcap_{X\in\cB}\Stab_\mu(X),
\]
and so $G^0_{\!\cB^\sharp}$ is a finite-index normal subgroup of $G$ in $\cB^\sharp$.
\vspace{3pt}

\item \textnormal{(The bi-invariant case)} Assume $\cB$ is bi-invariant. Then $G^0_{\!\cB}$ is normal and the map in $(1a)$ is a group isomorphism. Moreover, if $p\in \Sg(\cB)$ then $\Stab(p)=G^0_{\!\cB}$; and if $X\in\cB$ then $\Stab_\mu(X)$ is a finite-index subgroup of $G$ in $\cB$. 
\end{enumerate}
\end{theorem}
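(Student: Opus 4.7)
The plan is to treat the four parts in order, exploiting the identification $\Sg(\cB)\cong\hat{G}_{\cB}$ from Corollary~\ref{cor:projlim}(b). By Lemma~\ref{lem:VCeqs}, the hypothesis that $G^0_{\!\cB}$ has finite index forces $G^0_{\!\cB}$ to be the smallest member of $\{H:H\leq^{\cB}_{\fin}G\}$, so the directed system defining $\hat{G}_{\cB}$ has a minimum and $\hat{G}_{\cB}$ collapses to $G/G^0_{\!\cB}$. This identifies $\Sg(\cB)$ with $G/G^0_{\!\cB}$ via $p\mapsto pG^0_{\!\cB}$ in a $G$-equivariant way, giving $(1a)$; and Corollary~\ref{cor:stabGl}, which describes $\Stab(p)$ as $\bigcap_{H\leq^{\cB}_{\fin}G}H^p$, immediately gives $(1b)$ since $G^0_{\!\cB}$ is the minimal such $H$. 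For $(2)$, Corollary~\ref{cor:structure} provides some $H\leq^{\cB}_{\fin}G$ and a union $Y$ of cosets of $H$ with $\mu(X\smd Y)=0$; since $G^0_{\!\cB}\leq H$, every coset of $H$ is a disjoint union of cosets of $G^0_{\!\cB}$, so $Y$ itself is a union of such cosets. The measure formula follows from left-invariance and finite additivity of $\mu$ applied to the finite partition of $G$ into cosets of $G^0_{\!\cB}$ of equal measure $1/[G:G^0_{\!\cB}]$.

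For $(3)$ the main work is to establish the chain
\[
G^0_{\!\cB^\sharp}=\bigcap_{a\in G}aG^0_{\!\cB}a\inv=\bigcap_{p\in\Sg(\cB)}\Stab(p)=\bigcap_{X\in\cB}\Stab_\mu(X),
\]
and then deduce that each $\Stab_\mu(X)$ is a finite-index subgroup in $\cB^\sharp$. The middle equality is immediate from $(1b)$ since $pG^0_{\!\cB}$ ranges over all cosets of $G^0_{\!\cB}$. For the first equality, the intersection $\bigcap_{a}aG^0_{\!\cB}a\inv$ is in fact a finite intersection (only finitely many conjugates of the finite-index subgroup $G^0_{\!\cB}$), hence a finite-index normal subgroup of $G$ in $\cB^\sharp$, so it contains $G^0_{\!\cB^\sharp}$; conversely, $G^0_{\!\cB^\sharp}$ is normal in $G$ (by the observation after Definition~\ref{def:G0}) and contained in $G^0_{\!\cB}$, hence in each conjugate $aG^0_{\!\cB}a\inv$. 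For the last equality, the $\subseteq$ direction uses that if $g$ fixes every generic type, then $\Sg_{gX\smd X}(\cB)=\emptyset$, so $gX\smd X$ is non-generic by Corollary~\ref{cor:gentype} and $\mu(gX\smd X)=0$ by Theorem~\ref{thm:gensets}(d); the $\supseteq$ direction uses that $\Stab_\mu(X)$ is a subgroup (Remark~\ref{rem:StabmuX}), so $g\inv\in\Stab_\mu(X)$, making $X\smd g\inv X$ non-generic and thus absent from every generic type, which forces $X\in p\Leftrightarrow X\in gp$ for all $X\in\cB$, i.e.\ $gp=p$.

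To show $\Stab_\mu(X)\in\cB^\sharp$ and has finite index, I plan to use $(2)$: if $Y$ is the canonical union of cosets of $G^0_{\!\cB}$ approximating $X$, then $\Stab_\mu(X)=\Stab_\mu(Y)=\{g\in G:gY=Y\}$, where the first equality uses left-invariance of $\mu$ applied to the inclusion $gX\smd X\subseteq(gX\smd gY)\cup(gY\smd Y)\cup(X\smd Y)$, and the second follows because $gY$ and $Y$ are both unions of cosets of $G^0_{\!\cB}$ of equal positive measure, so $\mu(gY\smd Y)=0$ forces $gY=Y$. Since the kernel of the left $G$-action on $G/G^0_{\!\cB}$ is $\bigcap_{a}aG^0_{\!\cB}a\inv=G^0_{\!\cB^\sharp}$, the set $\{g:gY=Y\}$ is a union of cosets of $G^0_{\!\cB^\sharp}$, hence lies in $\cB^\sharp$ and has finite index. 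Part $(4)$ then drops out: when $\cB$ is bi-invariant, $G^0_{\!\cB}=G^0_{\!\cB^\sharp}$ is normal (remark after Definition~\ref{def:G0}), the map in $(1a)$ becomes a group homomorphism by Lemma~\ref{lem:cosets}, $(1b)$ and normality give $\Stab(p)=G^0_{\!\cB}$, and $\Stab_\mu(X)\in\cB$ specializes from $(3)$.

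The main obstacle is the identification $\Stab_\mu(X)=\{g:gY=Y\}$ together with its membership in $\cB^\sharp$: this step converts the measure-theoretic condition defining $\Stab_\mu(X)$ into an algebraic equality of cosets, and the crucial input is that each coset of $G^0_{\!\cB}$ has \emph{equal positive} measure, so that a null symmetric difference upgrades to genuine equality; without this upgrade one could not reduce $\Stab_\mu(X)$ to a stabilizer for the finite action of $G$ on $G/G^0_{\!\cB}$.
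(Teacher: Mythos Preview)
Your proof is correct, and for parts $(1)$, $(2)$, and $(4)$ it follows essentially the same route as the paper (via Corollaries \ref{cor:projlim}, \ref{cor:stabGl}, and \ref{cor:structure}).

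The genuine difference is in part $(3)$. For the chain of equalities, the paper argues as a cycle: it shows $G^0_{\!\cB^\sharp}\subseteq\bigcap_a aG^0_{\!\cB}a^{-1}=\bigcap_p\Stab(p)\subseteq\bigcap_X\Stab_\mu(X)\subseteq G^0_{\!\cB^\sharp}$, closing the loop by extending $\Stab_\mu$ from $\cB$ to $\cB^\sharp$ via the \emph{right}-invariance of $\mu$. You instead prove each equality directly, in particular establishing $\bigcap_X\Stab_\mu(X)\subseteq\bigcap_p\Stab(p)$ by the elementary observation that if $X\smd g^{-1}X$ is non-generic for every $X\in\cB$ then it lies outside every generic type, forcing $gp=p$. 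This avoids invoking right-invariance.

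More significantly, for the membership $\Stab_\mu(X)\in\cB^\sharp$ the paper appeals once more to definability of types: it introduces the stable relation $\varphi(x,y)$ given by $x\in y^{-1}X\smd X$, and uses Theorem \ref{thm:deftypes}$(a)$ to place each set $\{g:g^{-1}X\smd X\in p\}$ (for the finitely many $p\in\Sg(\cB)$) into $\cB^*_\varphi\subseteq\cB^\sharp$. Your route is more combinatorial: you pass to the coset-approximation $Y$ from part $(2)$, upgrade the measure condition $\mu(gY\smd Y)=0$ to the set-theoretic equality $gY=Y$ (using that each coset of $G^0_{\!\cB}$ has positive measure), and then identify $\Stab_\mu(X)=\{g:gY=Y\}$ as a union of cosets of the normal finite-index subgroup $G^0_{\!\cB^\sharp}\in\cB^\sharp$. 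This is a cleaner and more self-contained argument, relying only on the structural content already established in $(2)$ rather than a fresh application of type definability; the paper's approach, by contrast, does not depend on part $(2)$ and so works in a more modular way.
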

\begin{proof}
Part $(1)$. Claims $(a)$ and $(b)$ follow from Corollaries \ref{cor:projlim} and \ref{cor:stabGl}.
\vspace{3pt}

Part $(2)$. For the first statement, it suffices to show that for any $X\in\cB$ and any left coset $C$ of $G^0_{\!\cB}$, exactly one of $C\cap X$ or $C\backslash X$ is generic. For this, let $p$ be the unique type in $\Sg(\cB)$ containing $C$ (which exists by part $(1)$).  By uniqueness and Corollary \ref{cor:gentype}, if $Y\in\cB$ and $Y\seq C$, then $Y$ is generic if and only if $Y\in p$. Moreover, since $C\in p$, we have that exactly one of $C\cap X$ or $C\backslash X$ is in $p$, as desired. 
 
For the second statement, it follows from Corollary \ref{cor:structure}, that there is a set $Y$, which is a union of cosets of $G^0_{\!\cB}$ such that $\mu(X\smd Y)=0$. So by finite additivity, a coset $C$ of $G^0_{\!\cB}$ is contained in $Y$ if and only if $\mu(C\cap X)>0$. 
\vspace{3pt}

Part $(3)$. We first prove the chain of equalities for $G^0_{\cB^\sharp}$. By part (1) and the definition of $G^0_{\!\cB^\sharp}$, we have $G^0_{\!\cB^\sharp}\seq \bigcap_{a\in G}aG^0_{\!\cB}a\inv=\bigcap_{p\in \Sg(\cB)}\Stab(p)$ . So it suffices to show
\[
\bigcap_{p\in\Sg(\cB)}\Stab(p)\seq \bigcap_{X\in\cB}\Stab_\mu(X)\seq G^0_{\!\cB^\sharp}.
\]
For the first containment, fix $a\in G$, and suppose $g\not\in\Stab_\mu(X)$ for some $X\in\cB$. Then $gX\smd X$ is generic (recall Remark \ref{rem:StabmuX}), and so there is some $p\in\Sg(\cB)$ containing $gX\smd X$ by Corollary \ref{cor:gentype}. It follows that $a\not\in\Stab(p)$. 
For the second containment, fix $g\in\bigcap_{X\in\cB}\Stab_\mu(X)$. We claim that $g\in \Stab_\mu(Y)$ for any $Y\in\cB^\sharp$. To see this, it suffices by definition of $\cB^\sharp$ to assume $Y=Xh$ for some $X\in\cB$ and $h\in G$. In this case, we have $\mu(gX\smd X)$ by assumption, and so $\mu(gY\smd Y)=0$ by right-invariance of $\mu$. So $g\in\Stab_\mu(Y)$. Now, notice that $G^0_{\!\cB^\sharp}=\bigcap_{H\leq^{\cB^\sharp}_{\fin}G}H=\bigcap_{H\leq^{\cB^\sharp}_{\fin}G}\Stab_\mu(H)$, which yields the desired result. 

Finally, fix $X\in\cB^\sharp$. Then $\Stab_\mu(X)$ has finite index in $G$ since it contains $G^0_{\!\cB^\sharp}$ by the above. So it remains to show that $\Stab_\mu(X)$ is in $\cB^\sharp$. Since $\Sg(\cB)$ is finite (by part (1)), it suffices by Remark \ref{rem:StabmuX} to fix $p\in \Sg(\cB)$ and show that the set $A=\{g\in G:g\inv X\smd X\in p\}$ is in $\cB^\sharp$. Let $\varphi(x,y)$ be the relation $x\in y\inv X\smd X$ on $G\times G$. Note that $\varphi(x,y)$ is stable and $\cB_\varphi\seq\cB^\sharp$. Let $p_0=p{\upharpoonright}\cB_\varphi$. Then $A=d_\varphi p_0\in\cB^*_{\varphi}$ by Theorem \ref{thm:deftypes}$(a)$. So it suffices to show that $\varphi(a,G)\in\cB^\sharp$ for any $a\in G$. To see this, note that if $a\in G$ then $\varphi(a,G)=G\backslash Xa\inv$ if $a\in X$, and $\varphi(a,G)=Xa\inv$ if $a\not\in X$.
\vspace{3pt}

Part $(4)$ follows easily from parts $(1)$ through $(3)$.
\end{proof}

\begin{corollary}\label{cor:fingen}
Let $\cB$ be a left-invariant sub-algebra of $\cB^{\st}_G$. The following are equivalent.
\begin{enumerate}[$(i)$]
\item $G^0_{\!\cB}$ has finite index in $G$.
\item $\Sg(\cB)$ is finite.
\item $G^0_{\!\cB^\sharp}$ has finite index in $G$.
\item $\Sg(\cB^\sharp)$ is finite.
\end{enumerate}
\end{corollary}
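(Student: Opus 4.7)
The plan is to reduce the four-way equivalence to two separate pieces: the equivalence $(i) \Leftrightarrow (ii)$ for a general left-invariant sub-algebra of $\cB^{\st}_G$ (which then also yields $(iii) \Leftrightarrow (iv)$ by applying the same statement to $\cB^\sharp$), together with $(i) \Leftrightarrow (iii)$ comparing the ``local'' and ``global'' connected components. Each direction will be short and rest on Theorem \ref{thm:local} and Corollary \ref{cor:projlim}.

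For $(i) \Rightarrow (ii)$, the bijection between $\Sg(\cB)$ and $G/G^0_{\!\cB}$ supplied by Theorem \ref{thm:local}(1a) immediately gives finiteness of $\Sg(\cB)$. For the converse $(ii) \Rightarrow (i)$, I would invoke Corollary \ref{cor:projlim}(b) to identify $\Sg(\cB)$ topologically with the inverse limit $\hat{G}_{\cB} = \varprojlim_{H \leq^{\cB}_{\fin} G} G/H$. Each projection $\hat{G}_{\cB} \to G/H$ is surjective, since every left coset of a finite-index $H \in \cB$ is generic and hence lies in some generic type by Corollary \ref{cor:gentype}. Thus if $\Sg(\cB)$ is finite then $[G:H] \leq |\Sg(\cB)|$ for every $H \leq^{\cB}_{\fin} G$. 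I would then pick $H_0 \leq^{\cB}_{\fin} G$ of minimum index; for any other $H \leq^{\cB}_{\fin} G$, the subgroup $H_0 \cap H$ also lies in $\cB$ with finite index, is contained in $H_0$, and by minimality has the same index as $H_0$. Hence $H_0 \seq H$, so $H_0 = G^0_{\!\cB}$ has finite index.

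For $(i) \Rightarrow (iii)$, I would appeal to Theorem \ref{thm:local}(3), which identifies $G^0_{\!\cB^\sharp}$ with $\bigcap_{a \in G} aG^0_{\!\cB}a\inv$: this is the normal core of $G^0_{\!\cB}$, which has finite index whenever $G^0_{\!\cB}$ does. Conversely, since $\cB \seq \cB^\sharp$, the defining intersection for $G^0_{\!\cB^\sharp}$ runs over a larger family of subgroups than that for $G^0_{\!\cB}$, so $G^0_{\!\cB^\sharp} \seq G^0_{\!\cB}$, and finite index of the former forces finite index of the latter. The equivalence $(iii) \Leftrightarrow (iv)$ is then just $(i) \Leftrightarrow (ii)$ applied to $\cB^\sharp$ (a left-invariant, in fact bi-invariant, sub-algebra of $\cB^{\st}_G$), completing the cycle.

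The main obstacle, to the extent there is one, is the converse direction of $(i) \Leftrightarrow (ii)$: passing from finiteness of $\Sg(\cB)$ to a genuine smallest finite-index subgroup in $\cB$ requires both the uniform bound on indices extracted from Corollary \ref{cor:projlim} and closure of $\cB$ under finite intersections. Every other implication is a mechanical consequence of Theorem \ref{thm:local}.
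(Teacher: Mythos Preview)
Your argument is correct in substance, but there is a slip in the $(ii)\Rightarrow(i)$ step: you write ``pick $H_0\leq^{\cB}_{\fin}G$ of \emph{minimum} index'', but the argument requires $H_0$ of \emph{maximum} index (equivalently, a minimal subgroup among those in $\cB$ of finite index). With ``minimum index'' one would get $H_0=G$, which says nothing. Once this is corrected, your reasoning is valid: the uniform bound $[G:H]\leq|\Sg(\cB)|$ guarantees a maximum index exists, and then $H_0\cap H$ being contained in $H_0$ with index no larger forces $H_0\subseteq H$.

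Your organization differs from the paper's. The paper runs the cycle $(i)\Rightarrow(iii)\Rightarrow(iv)\Rightarrow(ii)\Rightarrow(i)$, using the surjectivity of restriction $\Sg(\cB^\sharp)\to\Sg(\cB)$ for $(iv)\Rightarrow(ii)$, whereas you prove $(i)\Leftrightarrow(ii)$ once and then transfer it to $\cB^\sharp$. More interestingly, for $(ii)\Rightarrow(i)$ the paper argues differently: it picks a single $p\in\Sg(\cB)$, uses Proposition~\ref{prop:LItop} to find $H\leq^{\cB}_{\fin}G$ with $\Sg_{pH}(\cB)=\{p\}$, and then shows any proper $K\leq^{\cB}_{\fin}G$ inside $H$ would produce two distinct generic types containing $H$, a contradiction. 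Your route via the uniform index bound is arguably more direct and avoids the pointwise basis argument, at the cost of invoking Corollary~\ref{cor:projlim} (though really all you need is that each coset of each $H\leq^{\cB}_{\fin}G$ lies in some generic type, which is just Corollary~\ref{cor:gentype}). Both arguments ultimately rest on the same machinery.
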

\begin{proof}
$(i)\Rightarrow (iii)\Rightarrow (iv)$ follow from Theorem \ref{thm:local}$(3)$ and  Corollary \ref{cor:projlim}. 

$(iv)\Rightarrow (ii)$. By Corollary \ref{cor:gentype}, restriction from $\Sg(\cB^\sharp)$ to $\Sg(\cB)$ is surjective.

$(ii)\Rightarrow (i)$. Suppose $\Sg(\cB)$ is finite and fix $p\in\Sg(\cB)$. Then $\{p\}$ is open in $\Sg(\cB)$ and so, by Proposition \ref{prop:LItop}, there is some $H\leq^{\cB}_{\fin} G$  such that $p$ is the unique type in $\Sg(\cB)$ containing $pH$. Without loss of generality, assume $pH=H$. Toward a contradiction, suppose $G^0_{\!\cB}\neq H$. Then some $K\leq^{\cB}_{\fin} G$ is a proper subgroup of $H$. Fix $g\in H\backslash K$. By Corollary \ref{cor:gentype}, there are $q,r\in\Sg(\cB)$ such that $K\in q$ and $gK\in r$. So $H\in q$, $H\in r$, and $q\neq r$, which contradicts the uniqueness of $p$. 
\end{proof}

We now focus on finding concrete examples of left-invariant Boolean algebras $\cB\seq\cB^{\st}_G$ such that $G^0_{\!\cB}$ has finite index in $G$. The following is a combinatorial version of \cite[Definition 5.13]{HrPiGLF}.

\begin{definition}\label{def:sif}
Let $V$ be a set. A relation $\varphi(x,y)$ on $G\times V$ is \textbf{left-invariant}  if, for any $g\in G$ and $b\in V$, there is some $c\in V$ such that $g\varphi(G,b)=\varphi(G,c)$.
\end{definition}

\begin{remark}\label{rem:sif}
The canonical example of a left-invariant relation is where $V=G$ and $\varphi(x,y)$ is defined by $x\in yA$ for some fixed $A\seq G$. More generally, this a natural setting for the study of a group $G$ definable in some first-order structure $M$. In this case, one often considers invariant formulas $\varphi(x,y)$ where the variable $x$ concentrates on $G$ and the variable $y$ is from an arbitrary sort in $M$ (e.g., $V=M^y$). 
\end{remark}

\begin{theorem}\label{thm:sif}
Suppose $\varphi(x,y)$ is a left-invariant stable relation on $G\times V$. Then $\cB_\varphi$ is a left-invariant sub-algebra of $\cB^{\st}_G$, and $G^0_{\!\cB_\varphi}$ has finite index in $G$.
\end{theorem}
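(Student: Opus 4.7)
The proof splits into three parts: $(\text{i})$ $\cB_\varphi$ is left-invariant, $(\text{ii})$ $\cB_\varphi\seq\cB^{\st}_G$, and $(\text{iii})$ $G^0_{\!\cB_\varphi}$ has finite index in $G$. Parts $(\text{i})$ and $(\text{ii})$ are short unpackings of definitions, but $(\text{iii})$ is the substantive content and is where Gannon's Theorem \ref{thm:summeas} must earn its keep.

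For $(\text{i})$, since left-translation $A\mapsto gA$ is a Boolean algebra homomorphism, it suffices to show the set of generators $\{\varphi(G,b):b\in V\}$ is closed under left-translation, which is exactly the content of Definition \ref{def:sif}. For $(\text{ii})$, by Fact \ref{fact:BSTG} it suffices to show each $\varphi(G,b)$ is stable in $G$. Suppose not: then the relation $xy\in\varphi(G,b)$ codes some infinite linear order $I$, witnessed by $(a_i)_{i\in I}$ and $(c_j)_{j\in I}$ in $G$. Rewriting $a_ic_j\in\varphi(G,b)$ as $c_j\in a_i\inv\varphi(G,b)$ and applying left-invariance of $\varphi$ to obtain $b_i\in V$ with $a_i\inv\varphi(G,b)=\varphi(G,b_i)$, we get $\varphi(c_j,b_i)\Leftrightarrow i\leq j$; after reindexing by $I^{op}$, this shows $\varphi$ codes an infinite linear order, contradicting its stability.

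For $(\text{iii})$, by Corollary \ref{cor:fingen} it suffices to show $\Sg(\cB_\varphi)$ is finite. Let $\mu$ be the unique left-invariant probability measure on $\cB_\varphi$ from Theorem \ref{thm:left-inv}$(e)$, and apply Theorem \ref{thm:summeas} to write $\mu=\sum_n \alpha_n p_n$ with $\alpha_n>0$ and the $p_n$ distinct. Each $p_n$ lies in $\Sg(\cB_\varphi)$: otherwise some non-generic $A\in p_n$ would give $\mu(A)\geq\alpha_n>0$, contradicting Theorem \ref{thm:gensets}$(d)$. Now view $\mu$ as a regular Borel measure on $S(\cB_\varphi)$, so $\mu=\sum_n\alpha_n\delta_{p_n}$; the set $X:=\{p_n\}$ is then exactly the set of atoms of $\mu$, with $\mu(\{p\})$ the canonical mass at $p\in X$. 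Since $g\mu=\mu$ for every $g\in G$, the atom set $X$ is $G$-invariant, and the mass function $p\mapsto\mu(\{p\})$ is constant on each $G$-orbit in $X$.

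Finally, for any $G$-orbit $O\seq X$, summing the constant mass $\alpha$ across $O$ gives $|O|\cdot\alpha\leq 1$, forcing $|O|<\infty$. Each such finite orbit is a closed $G$-invariant subset of the minimal $G$-flow $\Sg(\cB_\varphi)$ (Theorem \ref{thm:left-inv}$(c)$), hence equals $\Sg(\cB_\varphi)$. Therefore $\Sg(\cB_\varphi)$ is finite, completing the proof via Corollary \ref{cor:fingen}. The main conceptual obstacle is the uniqueness of the atomic decomposition in step $(\text{iii})$: one must pass through the Stone-space representation to identify the $p_n$ as the honest atoms of $\mu$ so that $G$-invariance of $\mu$ transfers cleanly to $G$-invariance of the indexed family $\{(\alpha_n,p_n)\}$; everything else is a routine combination of the machinery already developed in Sections \ref{sec:SBsemi} and \ref{sec:stable}.
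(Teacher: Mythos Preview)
Your proof is correct. Parts $(\text{i})$ and $(\text{ii})$ make explicit what the paper leaves as ``straightforward to show'', and your treatment of them is fine.

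For part $(\text{iii})$, both you and the paper start from Gannon's Theorem \ref{thm:summeas} and write $\mu=\sum_n\alpha_n p_n$, but the endgames diverge. The paper simply fixes one $p=p_n$ with $\alpha_n>0$ and observes that for any $H\leq^{\cB}_{\fin}G$, the unique coset $pH$ of $H$ in $p$ satisfies $\mu(pH)\geq\alpha_n$; since $\mu(pH)=1/[G{:}H]$ by left-invariance, this gives a uniform bound $[G{:}H]\leq\alpha_n\inv$, so there is a subgroup of maximal index, which must be $G^0_{\!\cB_\varphi}$. By contrast, you pass to the Stone space, identify the $p_n$ as the atoms of the Borel extension of $\mu$, use $G$-invariance of $\mu$ to get finite $G$-orbits of atoms inside $\Sg(\cB_\varphi)$, and then invoke minimality of $\Sg(\cB_\varphi)$ (Theorem \ref{thm:left-inv}$(c)$) together with Corollary \ref{cor:fingen}. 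Your route is a bit longer and leans on more of the Section \ref{sec:stable} machinery (minimality, the equivalence in Corollary \ref{cor:fingen}), but it has the pleasant feature of making visible that the $p_n$ are themselves generic and that $\Sg(\cB_\varphi)$ is literally a finite orbit of atoms of $\mu$. The paper's argument is more self-contained: it never needs to know that the $p_n$ are generic, nor does it need minimality or Corollary \ref{cor:fingen}.
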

\begin{proof}
Let $\cB=\cB_\varphi$ be the Boolean algebra generated by $\{\varphi(G,b):b\in V\}$. It is straightforward to show that $\cB$ is left-invariant and contained in $\cB^{\st}_G$. By Theorem \ref{thm:summeas}, there are $p_n\in S(\cB)$ and $\alpha_n\in [0,1]$, for $n\in\N$, such that  $\mu{\upharpoonright}\cB=\sum_{n=0}^\infty\alpha_n p_n$. Fix $n\in\N$ such that $\alpha_n>0$, and let $p=p_n$. For any $H\leq^{\cB}_{\fin} G$, we have $\mu(pH)\geq\alpha_n$, which implies that $[G:H]\leq \alpha_n\inv$. So we may choose some $H\leq^{\cB}_{\fin}G$ of maximal index in $G$. It follows that $H=G^0_{\!\cB}$. 
\end{proof}

\begin{remark}
Gannon's proof of Theorem \ref{thm:summeas} in \cite{GanStab} uses the L\"{o}wenheim-Skolem Theorem to reduce to the case of countable structures. We can do the same and give another proof of Theorem \ref{thm:sif}. In particular, suppose $\varphi(x,y)$ is a left-invariant stable relation on $G\times V$. By Corollary \ref{cor:fingen}, it suffices to show $\Sg(\cB_{\varphi})$ is finite. So suppose $\Sg(\cB_{\varphi})$ is infinite. Without loss of generality, we may assume $G$ is countable (view $(G,V)$ as a two-sorted structure in the group language with a predicate for $\varphi(x,y)$, and apply L\"{o}weinheim-Skolem to obtain a countable elementary substructure containing parameters for instances of $\varphi(x,y)$ that distinguish infinitely many generic $\varphi$-types). Now, the map $p\mapsto d_\varphi p$ from $S(\cB_\varphi)$ to $\cP(V)$ is injective by construction, and has image in $\cB^*_{\varphi}$ by  Theorem \ref{thm:deftypes}$(a)$. Since $G$ is countable, $\cB^*_{\varphi}$ is countably generated, and thus countable. Altogether, $\Sg(\cB_\varphi)$ is a countably infinite compact Hausdorff homogeneous space. But such spaces do not exist.

We also note that if $\varphi(x,y)$  is actually $k$-stable  for some $k\geq 1$, then $\Sg(\cB_\varphi)$ is finite by a result of Hrushovski and Pillay (see \cite[Lemma 5.16]{HrPiGLF}). 
\end{remark}

As an application, we now give an explicit  structure statement for stable subsets of groups, which is formulated entirely using genericity.

\begin{corollary}\label{cor:genstructure}
Suppose $A\seq G$ is stable, and set 
\[
H=\{g\in G:gxA\smd xA\text{ is not generic for any $x\in G$}\}.
\]
\begin{enumerate}[$(a)$]
\item  $H$ is a finite-index normal subgroup of $G$, and is in the Boolean algebra generated by $\{gAh:g,h\in G\}$. 
\item For any coset $C$ of $H$, exactly one of $C\cap A$ or $C\backslash A$  is generic. Moreover, there is a set $Y\seq G$, which is a union of cosets of $H$, such that $A\smd Y$ is not generic.
\item If $A$ is generic then $H\seq AA\inv\cap A\inv A$.
\end{enumerate}
\end{corollary}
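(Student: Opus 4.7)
The plan is to identify the Boolean algebra $\cB$ generated by $\{gAh:g,h\in G\}$ as $\cB_\varphi^\sharp$ for a suitable stable left-invariant relation $\varphi$, and then read off parts $(a)$--$(c)$ from Theorem \ref{thm:local}.

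Take $\varphi(x,y)$ on $G\times G$ defined by $yx\in A$. Stability of $A$ in $G$ yields stability of $\varphi$, and $\varphi$ is left-invariant since $g\cdot\varphi(G,b)=gb\inv A=\varphi(G,bg\inv)$. The left-invariant Boolean algebra $\cB_\varphi$ is then generated by $\{gA:g\in G\}$, so $\cB_\varphi^\sharp$ equals the target algebra $\cB$. Theorem \ref{thm:sif} gives that $G^0_{\cB_\varphi}$ has finite index in $G$, and Corollary \ref{cor:fingen} upgrades this to $G^0_\cB$. For part $(a)$, Theorem \ref{thm:local}(3) supplies
\[
G^0_\cB=\bigcap_{X\in\cB_\varphi}\Stab_\mu(X)=\bigcap_{x\in G}\Stab_\mu(xA)=\bigcap_{x\in G}x\,\Stab_\mu(A)\,x\inv,
\]
where the second equality uses that $\Stab_\mu(X\cap Y)\supseteq\Stab_\mu(X)\cap\Stab_\mu(Y)$ (and analogously for union and complement), so only the generators of $\cB_\varphi$ matter, while the third equality is a bi-invariance computation: $g\in\Stab_\mu(xA)$ iff $\mu(x\inv gxA\smd A)=0$ iff $x\inv gx\in\Stab_\mu(A)$. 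Translating $\mu(Z)=0$ as ``$Z$ is not generic'' via Theorem \ref{thm:gensets}(d), this intersection coincides with $H$. Hence $H=G^0_\cB$, and Theorem \ref{thm:local}(3) certifies that $H$ is a finite-index normal subgroup of $G$ lying in $\cB$.

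Part $(b)$ is a direct application of Theorem \ref{thm:local}(2) to the bi-invariant algebra $\cB$ and the element $X=A\in\cB$: for each coset $C$ of $H=G^0_\cB$, either $\mu(C\cap A)=0$ or $\mu(C\backslash A)=0$. Since $C$ is generic (being a coset of a finite-index subgroup), Theorem \ref{thm:gensets}(d) converts this into the claimed dichotomy. Taking $Y$ to be the union of those cosets with $\mu(C\cap A)>0$, the ``moreover'' clause of Theorem \ref{thm:local}(2) gives $\mu(A)=\mu(Y)=\mu(A\cap Y)$, so $\mu(A\smd Y)=0$, i.e., $A\smd Y$ is not generic.

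For part $(c)$, assume $A$ is generic, so $\mu(A)>0$ by Theorem \ref{thm:gensets}(d), and fix $g\in H$. Since $H\seq\Stab_\mu(A)$, $\mu(gA\smd A)=0$ and therefore $\mu(gA\cap A)=\mu(A)>0$; in particular $gA\cap A\neq\emptyset$, giving $g\in AA\inv$. The main obstacle is the symmetric inclusion $g\in A\inv A$, which requires the right-sided statement $Ag\cap A\neq\emptyset$ even though $H$ was defined purely via left translates. The resolution is to feed back part $(b)$: with $Y$ as above, $Y$ is a union of cosets of the normal subgroup $H$, so $Yg=Y$ for $g\in H$. Right-invariance of $\mu$ then yields
\[
\mu(Ag\smd Y)=\mu((A\smd Y)g)=\mu(A\smd Y)=0,
\]
and combining with $\mu(A\smd Y)=0$ gives $\mu(Ag\smd A)=0$, hence $Ag\cap A\neq\emptyset$ and $g\in A\inv A$.
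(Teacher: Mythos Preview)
Your argument is correct. Parts $(a)$ and $(b)$ follow the same route as the paper: identify $\cB_\varphi$ as the algebra generated by left translates of $A$, apply Theorem \ref{thm:sif} and Corollary \ref{cor:fingen}, and then read off $H=\bigcap_{X\in\cB_\varphi}\Stab_\mu(X)=G^0_{\cB_\varphi^\sharp}$ from Theorem \ref{thm:local}$(3)$. (One small imprecision: in part $(b)$ the equality $\mu(A\cap Y)=\mu(A)$ is not literally part of the ``moreover'' clause of Theorem \ref{thm:local}$(2)$, but it follows immediately from the first sentence of that clause, so no harm done.)

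For part $(c)$ your approach genuinely diverges from the paper's. The paper picks a generic type $p\in\Sg(\cB^\sharp)$ containing $A$, uses Theorem \ref{thm:local}$(4)$ to get $\Stab(p)=H$, and deduces $gA\cap A\neq\emptyset$ from $gp=p$; for the right side it argues that normality of $H$ forces $pg=p$ as well, whence $Ag\cap A\neq\emptyset$. You instead stay entirely within the measure, feeding part $(b)$ back in: since $Y$ is a union of cosets of the normal subgroup $H$, $Yg=Y$, and right-invariance of $\mu$ gives $\mu(Ag\smd A)=0$. Your route is a bit more self-contained (it avoids the right action of $G$ on $S(\cB^\sharp)$ and the bijection with $G/H$), while the paper's route is closer in spirit to classical stable group theory via stabilizers of types. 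Either is a clean way to handle the asymmetry between the left-handed definition of $H$ and the right-handed conclusion $H\subseteq A^{-1}A$.
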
 
\begin{proof}
Let $\cB$ be the Boolean algebra generated by $\{gA:g\in A\}$. Then $\cB^\sharp$ is the Boolean algebra generated by $\{gAh:g,h\in A\}$. Note also that if we define $\varphi(x,y)$ on $G\times G$ such that $x\in yA$, then $\cB=\cB_{\varphi}$. Therefore, $G^0_{\cB^\sharp}$ has finite index in $G$ by Corollary \ref{cor:fingen} and Theorem \ref{thm:sif} (via Remark \ref{rem:sif}).
Note that $H=\bigcap_{x\in G}\Stab_\mu(xA)$. By definition of $\cB$, it follows that $H=\bigcap_{X\in\cB}\Stab_\mu(X)$, and so $H=G^0_{\!\cB^\sharp}$ by Theorem \ref{thm:local}$(3)$. So parts $(a)$ and $(b)$ follow from  Theorem \ref{thm:local}. 
For part $(c)$, suppose $A$ is generic. So there is some $p\in\Sg(\cB^\sharp)$ containing $A$ by Corollary \ref{cor:gentype}. By Theorem \ref{thm:local}$(4)$, we have $H=\Stab(p)$. So if $g\in H$ then $gA\in gp=p$, and so $gA\cap A\in p$, hence $gA\cap A\neq\emptyset$. It follows that $H\seq AA\inv$. Moreover, if $g\in G$ then $pg\in\Sg(\cB^\sharp)$ and so, by Theorem \ref{thm:local}$(4)$, $pg=p$ if and only if $p$ and $pg$ contain the same coset of $H$. By normality of $H$, we have $pg=p$ if and only if $g\in H$. So $H\seq A\inv A$ by a similar argument. 
\end{proof}

Part $(c)$ of the previous corollary is reminiscent of results on stabilizers and simple groups of finite Morley rank (see also Corollary \ref{cor:coninv}), and was also motivated by \cite{MPW}. One might wonder why we did not try to apply Theorem \ref{thm:sif} directly to $\cB^\sharp$ in the proof by choosing a different relation. More generally, if $\varphi(x,y)$ on $G\times V$ is left-invariant then $(\cB_\varphi)^\sharp=\cB_{\varphi^\sharp}$, where $\varphi^\sharp(x;y,z)$ is the relation on $G\times (V\times G)$ defined by $\varphi(xz,y)$. So if $\varphi(x,y)$ is stable then $\cB_{\varphi^\sharp}$ is a bi-invariant sub-algebra of $\cB^{\st}_G$. However, $\varphi^\sharp(x;y,z)$ may be unstable, as we see in the following examples. 

\begin{example}\label{ex:bad}$~$
\begin{enumerate}[$(a)$] 
\item (from \cite{CPpfNIP}) Let $G=\Sym(\N)$, and let $H=\{\sigma\in G:\sigma(0)=0\}\leq G$. Then $\varphi(x,y):= ``x\in yH"$ is $2$-stable. Given $i\geq 1$, let $a_i$ be the transposition $(0~i)$ and let $b_i\in G$ be any permutation that fixes $j\geq 1$ if and only if $j\geq i$. Then $b_j\in a_iHa_i$ if and only if $j\geq i$, and so $\varphi^\sharp(x;y,z)$ is unstable.
\item Let $G=\GL_2(\C)$, and let $H=\langle \begin{psmallmatrix}1 & 1\\ 0 & 1\end{psmallmatrix}\rangle =\{\begin{psmallmatrix}1 & n\\ 0 & 1\end{psmallmatrix}:n\in\Z\}\leq G$.
Then $\varphi(x,y):= ``x\in yH"$ is $2$-stable.   
Let $(p_i)_{i=1}^\infty$ be an increasing enumeration of the primes and, for $j\geq 1$, set $t_j=\prod_{i\leq j}p_i$. Then $\begin{psmallmatrix}1 & t_j\\ 0 & 1\end{psmallmatrix} \in \begin{psmallmatrix}p_i & 0\\ 0 & 1\end{psmallmatrix}H\begin{psmallmatrix}p_i & 0\\ 0 & 1\end{psmallmatrix}\inv$ if and only if $i\leq j$, and so $\varphi^\sharp(x;y,z)$ is unstable. 
\end{enumerate}
In both examples, one can also show that $\varphi^\sharp(x;y,z)$ has the \emph{independence property} and the \emph{strict order property} in the structure $(G,\cdot,H)$.
\end{example}

\begin{remark}
 The use of Theorem \ref{thm:sif} (and L\"{o}wenheim-Skolem in particular) can be avoided in the proof of Corollary \ref{cor:genstructure}. In fact, given a finite set $\cA\seq\cB^{\st}_G$, if $\cB$ is the Boolean algebra generated by $\{gA:g\in G,~A\in\cA\}$, then one can prove that  $G^0_{\!\cB}$ has finite index in $G$ as follows.  By Corollary \ref{cor:structure}, there is some $H\leq^{\cB}_{\fin} G$ such that, for any $A\in\cA$, there is a union $Y$ of left cosets of $H$ such that $\mu(A\smd Y)=0$. By induction, one can show that the same is true for any set $\cB$. Now, if $H\neq G^0_{\!\cB}$, then there is some $K\leq^{\cB}_{\fin} G$ which is a proper subgroup of $H$. So $\mu(K\smd Y)=0$ for some union $Y$ of left cosets of $H$, which is a contradiction. 
\end{remark}

\section{Remarks on stable additive combinatorics}\label{sec:SAC}

  We say that a group $G$ is \textbf{amenable} if there is a left-invariant probability measure on $\cP(G)$. A fundamental result is that $G$ is amenable if and only if it admits a \emph{F{\o}lner net}, i.e., a net $\mathcal{F}=(F_i)_{i\in I}$ of nonempty finite subsets of $G$ such that, for any $a\in G$, $|aF_i\cap F_i|/|F_i|\to 1$. For example, in $(\Z,+)$ any sequence of intervals of diverging length is a F{\o}lner net. Given $A\seq G$, we let $\delta^{\mathcal{F}}(A)=\limsup_{i\in I}|A\cap F_i|/|F_i|$ and $\delta_{\mathcal{F}}(A)=\liminf_{i\in I}|A\cap F_i|/|F_i|$. The \emph{upper} and \emph{lower} \emph{Banach density} of a subset $A$ of an amenable group $G$ are (respectively)
\begin{align*}
\overline{\delta}(A) &= \sup\{\delta^{\mathcal{F}}(A):\text{$\mathcal{F}$ is a F{\o}lner net for $G$}\},\text{ and}\\
\underline{\delta}(A) &= \inf\{\delta_{\mathcal{F}}(A):\text{$\mathcal{F}$ is a F{\o}lner net for $G$}\}.
\end{align*}
A good exercise is to show that a subset $A$ of an amenable group $G$ is generic if and only if $\underline{\delta}(A)>0$. 
In general, one has $\underline{\delta}(A)\leq\overline{\delta}(A)$, and this inequality may be strict. However, for stable sets this does not happen.

\begin{lemma}\label{lem:amen}
Suppose $G$ is an amenable group and $A\in \cB^{\st}_G$. Then $\overline{\delta}(A)=\underline{\delta}(A)$, and this density is rational. In particular, if $\overline{\delta}(A)>0$ then $A$ is generic.
\end{lemma}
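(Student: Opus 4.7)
My plan is to show that $\overline{\delta}(A) = \underline{\delta}(A) = \mu(A)$, where $\mu$ is the unique bi-invariant probability measure on $\cB^{\st}_G$ provided by Theorem \ref{thm:left-inv}(e) and Lemma \ref{lem:must}. The rationality will then fall out of Corollary \ref{cor:structure}, and genericity from Theorem \ref{thm:gensets}(d).

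The main tool is a uniqueness-of-measure argument. Given any F{\o}lner net $\mathcal{F} = (F_i)_{i \in I}$ and any ultrafilter $\cU$ on $I$ refining the order filter, define
\[
\nu_{\cU}(B) := \lim_{\cU} \frac{|B \cap F_i|}{|F_i|} \quad \text{for } B \seq G.
\]
This defines a finitely additive probability measure on $\cP(G)$, and the F{\o}lner condition $|aF_i \smd F_i|/|F_i| \to 0$ implies that $\nu_\cU$ is left-invariant (since $|aB \cap F_i|/|F_i| - |B \cap F_i|/|F_i|$ is bounded in absolute value by $|a^{-1}F_i \smd F_i|/|F_i|$, which tends to $0$ along $I$ and hence along $\cU$). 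Restricting $\nu_\cU$ to $\cB^{\st}_G$ gives a left-invariant probability measure, which by Theorem \ref{thm:left-inv}(e) must equal $\mu$. So $\nu_\cU(A) = \mu(A)$ for every stable $A$ and every such $\cU$.

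Next I would recall the standard characterization of limsup and liminf of a bounded net as the supremum and infimum, respectively, of ultrafilter limits taken along ultrafilters refining the order filter. Applied to the bounded net $(|A \cap F_i|/|F_i|)_{i \in I}$, this yields
\[
\delta^{\mathcal{F}}(A) = \sup_{\cU} \nu_\cU(A) = \mu(A) = \inf_{\cU} \nu_\cU(A) = \delta_{\mathcal{F}}(A),
\]
where $\cU$ ranges over ultrafilters refining the order filter on $I$. Taking the sup and inf over all F{\o}lner nets $\mathcal{F}$ of $G$ then gives $\overline{\delta}(A) = \underline{\delta}(A) = \mu(A)$. By Corollary \ref{cor:structure}, $\mu(A) = |\cC|/[G:H]$ for a finite-index subgroup $H \leq G$ and a finite set $\cC$ of cosets, so this density is rational. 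Finally, if $\overline{\delta}(A) > 0$ then $\mu(A) > 0$, and Theorem \ref{thm:gensets}(d) shows $A$ is generic.

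The only subtle step is the passage from ultrafilter limits of $\nu_\cU$ to the limsup/liminf of the density along the net $\mathcal{F}$, but this is a standard fact about nets in $[0,1]$: $\limsup_i x_i$ coincides with the maximum of $\lim_\cU x_i$ over ultrafilters $\cU$ containing the order filter (and dually for $\liminf$), so no genuine obstacle arises.
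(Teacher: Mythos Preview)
Your argument is correct and follows the same core idea as the paper: construct left-invariant measures on $\cP(G)$ as ultrafilter limits of F{\o}lner averages, restrict to $\cB^{\st}_G$, and invoke the uniqueness of the left-invariant measure there (Theorem~\ref{thm:left-inv}(e)) to pin down the value at $A$.

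The execution differs slightly. The paper appeals to an external result (\cite[Theorem~4.16]{HiStDAS}) to choose, for each of $\overline{\delta}(A)$ and $\underline{\delta}(A)$, a single F{\o}lner net along which the density actually converges to that value; one ultralimit of each then yields two left-invariant measures taking the values $\overline{\delta}(A)$ and $\underline{\delta}(A)$ at $A$, and uniqueness forces these to coincide. You instead work with an \emph{arbitrary} F{\o}lner net and show, via the ultrafilter characterization of $\limsup$/$\liminf$, that already $\delta^{\mathcal{F}}(A)=\delta_{\mathcal{F}}(A)=\mu(A)$ for every $\mathcal{F}$. This is a bit more self-contained (no external citation needed) and in fact proves the marginally stronger statement that $|A\cap F_i|/|F_i|\to\mu(A)$ along every F{\o}lner net; the paper's route is shorter once the cited fact is granted. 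Both finish rationality and genericity exactly as you do, via Corollary~\ref{cor:structure} and Theorem~\ref{thm:gensets}(d).
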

\begin{proof}
Let $\alpha_1=\overline{\delta}(A)$ and $\alpha_0=\underline{\delta}(A)$. Fix $t\in\{0,1\}$. Then there is a F{\o}lner net $(F^t_i)_{i\in I}$ such that $|F^t_i\cap A|/|F^t_i|\to\alpha_t$ (see \cite[Theorem 4.16]{HiStDAS}). Let $\mu_t$ be a nonprincipal ultralimit of counting measures normalized on $F^t_i$. Then $\mu_t$ is a left-invariant  probability measure  on $\cP(G)$, and $\mu_t(A)=\alpha_t$. So $\mu_0(A)=\mu_1(A)$ by Theorem \ref{thm:intro}$(a)$, and this value is rational by Theorem \ref{thm:intro}$(c)$.
\end{proof}

It was shown in \cite{CoSS} that if $A\seq\N$ is definable in a (globally) stable expansion of $(\Z,+)$, then $A$ has upper Banach density $0$. Since no subset of $\N$ is generic (in $\Z$), the previous lemma generalizes this result to the ``local and in a model" case.

\begin{corollary}
If $A\seq\N$ is stable in $(\Z,+)$, then it has upper Banach density $0$.
\end{corollary}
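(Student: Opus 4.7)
The plan is to argue by contradiction, leveraging Lemma \ref{lem:amen} directly. Suppose $A \subseteq \N$ is stable in $(\Z,+)$ and assume for contradiction that $\overline{\delta}(A) > 0$. Since $(\Z,+)$ is amenable (being abelian, with any sequence of intervals of diverging length serving as a F{\o}lner net), Lemma \ref{lem:amen} applies and yields that $A$ is generic in $\Z$.

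By the definition of genericity, there is a finite set $F \subseteq \Z$ such that $\Z = F + A$ (writing the group operation additively). Let $m = \min F$, which exists since $F$ is finite and nonempty. Because $A \subseteq \N$, every element $a \in A$ satisfies $a \geq 0$, and hence every element of $F + A$ is at least $m$. In particular, $m - 1 \notin F + A$, contradicting $\Z = F + A$. Therefore $\overline{\delta}(A) = 0$, as claimed.

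There is no real obstacle here; the corollary is essentially a one-line consequence of Lemma \ref{lem:amen} once one observes that a subset of $\N$ cannot be generic in $\Z$ (any finite translate must be bounded below). The substantive content has already been absorbed into Theorem \ref{thm:intro} and its measure-theoretic consequences feeding Lemma \ref{lem:amen}.
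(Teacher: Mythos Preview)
Your proof is correct and matches the paper's approach exactly: the paper simply notes that no subset of $\N$ is generic in $\Z$, so Lemma \ref{lem:amen} immediately gives the result. You supply the one-line justification (a finite union of translates of $A\subseteq\N$ is bounded below) that the paper leaves implicit.
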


The next proposition is motivated by \emph{Erd\H{o}s's sumset conjecture}, which says that if $A\seq\Z$ has positive upper Banach density, then there are infinite $B,C\seq\Z$ such that $B+C\seq A$ (this was recently proved in \cite{MoRiRo}). In \cite{ACG}, there is a short proof that if $G$ is a countable amenable group and $A\in\cB^{\st}_G$ has positive upper Banach density, then there are infinite $B,C\seq G$ such that $BC\seq A$.  Together with Lemma \ref{lem:amen}, the next proposition gives a different proof this result, which works for any amenable group and yields a much stronger conclusion. 

\begin{proposition}
Let $G$ be a group and suppose $A\in\cB^{\st}_G$ is generic. Then there is a finite set $F\seq G$ such that, for any infinite $B,C\seq G$, there are $g\in F$ and infinite $B'\seq B$ and $C'\seq C$ such that $B'C'\seq gA$.
\end{proposition}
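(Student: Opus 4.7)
The plan is to reduce the statement to a bipartite Ramsey question inside stable relations. Since $A$ is generic, I first fix a finite $F\seq G$ with $G=FA=g_1A\cup\ldots\cup g_nA$, and refine this cover to a disjoint partition $G=A_1\sqcup\ldots\sqcup A_n$ with $A_i=g_iA\setminus\bigcup_{j<i}g_jA$. By Fact \ref{fact:BSTG}, $\cB^{\st}_G$ is a Boolean algebra closed under left translation, so each $A_i$ lies in $\cB^{\st}_G$, and by construction $A_i\seq g_iA$. It therefore suffices, for any infinite $B,C\seq G$, to produce an index $i$ and infinite $B'\seq B$, $C'\seq C$ with $B'C'\seq A_i$, for then $B'C'\seq g_iA$ with $g_i\in F$.

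The next step is to establish the following Ramsey-type lemma: \emph{if $R\seq G\times G$ is a stable relation and $B_0,C_0\seq G$ are infinite, then there exist infinite $B_1\seq B_0$ and $C_1\seq C_0$ on which $R$ is constant}. Granting the lemma, I would iterate over $i=1,\ldots,n$. Each relation $R_i(x,y):\equiv xy\in A_i$ on $G\times G$ is stable because $A_i\in\cB^{\st}_G$. Apply the lemma to $R_1$ on $(B,C)$: if $R_1$ is constantly true on the refined pair, we are done with $i=1$; otherwise it is constantly false there, and I pass to the refined pair and look at $R_2$, and so on. If no stage produces a ``constantly true'' refinement, the final pair $(B_n,C_n)$ satisfies $R_i\equiv 0$ for every $i$, contradicting $G=\bigsqcup_iA_i$.

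The heart of the argument, and the main obstacle, is the bipartite Ramsey lemma for stable relations. I would prove it as follows. Enumerate $B_0=\{b_1,b_2,\ldots\}$ and $C_0=\{c_1,c_2,\ldots\}$, and color a pair $\{i,j\}$ with $i<j$ by $(R(b_i,c_j),R(b_j,c_i))\in\{0,1\}^2$. The infinite Ramsey theorem furnishes an infinite homogeneous set $H$, which I thin once more so that $R(b_i,c_i)$ is constant on $H$. Thus on $H$ there are values $\epsilon_1,\epsilon_2,\delta\in\{0,1\}$ with $R(b_i,c_j)=\epsilon_1$ for $i<j$, $R(b_i,c_j)=\epsilon_2$ for $i>j$, and $R(b_i,c_i)=\delta$. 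If $\epsilon_1\neq\epsilon_2$, then after at most a one-step shift in one of the sequences the families $(b_i)_{i\in H}$ and $(c_j)_{j\in H}$ witness that $R$ codes the infinite linear order $(\N,\leq)$ or its reverse, contradicting stability (Definition \ref{def:RBA}). Hence $\epsilon_1=\epsilon_2=:\epsilon$. If $\delta=\epsilon$, we are done on $\{b_i:i\in H\}\times\{c_j:j\in H\}$; otherwise I split $H$ into two disjoint infinite subsets $H_1,H_2$ and take $B_1=\{b_i:i\in H_1\}$, $C_1=\{c_j:j\in H_2\}$, so that the diagonal is avoided and $R\equiv\epsilon$ on $B_1\times C_1$. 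With this lemma in hand, the iteration above completes the proof, and the set $F$ depends only on $A$.
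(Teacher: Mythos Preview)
Your proof is correct and rests on the same two ingredients as the paper's argument: the infinite Ramsey theorem on pairs and the observation that a stable relation cannot exhibit the ``half-graph'' pattern $\epsilon_1\neq\epsilon_2$ on an infinite homogeneous set. The packaging differs, however. You first refine the cover $G=\bigcup_i g_iA$ to a disjoint partition by stable sets, isolate a clean bipartite Ramsey lemma for a single stable relation, and then iterate it $n$ times. The paper avoids both the partition step and the iteration by applying Ramsey once with the larger color set $F\times\{0,1\}$: for $i<j$ the pair $\{i,j\}$ is colored by $(g_{i,j},k)$, where $g_{i,j}\in F$ is any witness to $b_ic_j\in g_{i,j}A$ and $k$ records whether also $b_jc_i\in g_{i,j}A$. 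On the resulting homogeneous set, stability of $gA$ forces $k=1$, and a single split of the index set into two infinite halves finishes. Your version has the virtue of extracting a reusable standalone lemma; the paper's version is shorter and sidesteps the need to make the pieces of the cover disjoint and individually stable.
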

\begin{proof}
Fix a finite set $F\seq G$ such that $G=FA$. Fix $B=\{b_n\}_{n=0}^\infty\seq G$ and $C=\{c_n\}_{n=0}^\infty\seq G$. Let $P=\{(i,j)\in\N\times\N:i<j\}$. Given $(i,j)\in P$, choose some $g_{i,j}\in F$ such that $b_ic_j\in g_{i,j}A$. Define $f\colon P\to F\times\{0,1\}$ such that $f(i,j)=(g_{i,j},0)$ if $b_jc_i\not\in g_{i,j}A$, and $f(i,j)=(g_{i,j},1)$ if  $b_jc_i\in g_{i,j}A$. By Ramsey's Theorem, there is an infinite set $I\seq\N$, and some $(g,k)\in F\times\{0,1\}$, such that $f(i,j)=(g,k)$ for all $i,j\in I$ with $i<j$. Since $A$ is stable in $G$, we cannot have $k=0$. So $k=1$, and we have $b_ic_j\in gA$ for all distinct $i,j\in I$. Partition $I=I_1\cup I_2$ into two infinite sets, and let $B'=\{b_i:i\in I_1\}$ and $C'=\{c_i:i\in I_2\}$. Then $B'C'\seq gA$.
\end{proof}

\begin{remark}
The previous result does not hold without the assumption of stability. For example, let $B=\{2x:x\in\Z,~x\geq 0\}$ and $C=\{2x:x\in\Z,~x<0\}$. Then $A=B\cup (C+1)$ is generic in $\Z$, but there do not exist infinite $B'\seq B$ and $C'\seq C$ such that $B'+C'\seq A+t$ for some $t\in\Z$.
\end{remark}

We say that a subset $A$ of a group $G$ is:
\begin{enumerate}
\item \textbf{thick} if for any finite $F\seq G$ there is some $g\in G$ such that $Fg\seq A$.
\item  \textbf{weakly generic} if $FA$ is thick for some finite $F\seq G$;
\item  \textbf{supergeneric} if $\bigcap_{g\in F}gA$ is generic for any finite $F\seq G$.
\end{enumerate}
The first notion is standard in combinatorial number theory (where generic sets are called \emph{syndetic} and weakly generic sets are called \emph{piecewise syndetic}), the second is from \cite{NewTD}, and the third is from  \cite{PoZ}. It is not hard to show that $A\seq G$ is generic if and only if $G\backslash A$ is not thick, and $A\seq G$ is supergeneric if and only if $G\backslash A$ is not weakly generic. In particular, if a set is supergeneric then it is generic and thick, and if a set is generic or thick then it is weakly generic.  

In the model theoretic context, it was observed by Newelski and Petrykowski in \cite{NePe} (and later by Poizat in \cite{PoZ}) that ultrafilters of weakly generic sets always exist.  Indeed, weakly generic sets are \emph{partition regular}, i.e., if $A\cup B$ is weakly generic then $A$ or $B$ is weakly generic. This fact is well-known in combinatorial number theory, and was shown by Bergelson, Hindman, and McCutcheon \cite{BHM}, with origins in even earlier work of Brown \cite{BrownPS}. It also yields the following characterization of when genericity and weak genericity coincide (see, e.g., \cite[Lemma 1.5]{NePe}). 

\begin{fact}\label{fact:w=g}
Let $G$ be a group, and suppose $\cB\seq\cP(G)$ is a left-invariant Boolean algebra. The following are equivalent.
\begin{enumerate}[$(i)$]
\item For any $A\in\cB$, $A$ is generic if and only if it is weakly generic. 
\item For any $A\in\cB$, $A$ is supergeneric if and only if it is thick.
\item There is a generic type in $S(\cB)$.
\end{enumerate}
\end{fact}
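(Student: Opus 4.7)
The plan is to obtain (i) $\Leftrightarrow$ (ii) by duality, (iii) $\Rightarrow$ (i) by a direct translation argument in $S(\cB)$, and (i) $\Rightarrow$ (iii) using partition regularity of weakly generic sets (cited to Brown and Bergelson--Hindman--McCutcheon in the paragraph preceding the fact). For (i) $\Leftrightarrow$ (ii), note the dualities $A$ generic $\Leftrightarrow G\setminus A$ not thick, and $A$ supergeneric $\Leftrightarrow G\setminus A$ not weakly generic; since $\cB$ is closed under complements, the negations of (i) and (ii) are mutually equivalent upon replacing $A$ by $G\setminus A$.

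For (iii) $\Rightarrow$ (i), fix a generic type $p\in S(\cB)$ and a weakly generic $A\in\cB$, so $FA$ is thick for some finite $F\seq G$. Then $FA\in\cB$ by left-invariance, while $G\setminus FA$ is not generic. Since every set in $p$ is generic, $G\setminus FA\notin p$, forcing $FA\in p$. As $FA=\bigcup_{f\in F}fA$ and $p$ is a $\cB$-type, some $fA\in p$, so $A\in f\inv p$. The collection of generic types is $G$-invariant (left translates of generic sets are generic), so $f\inv p$ is a generic type containing $A$, and $A$ is generic.

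For (i) $\Rightarrow$ (iii), let $\cW\seq\cB$ denote the family of weakly generic sets. Then $G\in\cW$, $\emptyset\notin\cW$, $\cW$ is upward-closed in $\cB$, and $\cW$ is partition regular. By Zorn's lemma, choose a $\subseteq$-maximal $p\seq\cW$ closed under finite intersection. I claim $p$ is a $\cB$-type. Fix $A\in\cB\setminus p$: maximality yields some $C\in p$ with $A\cap C\notin\cW$, and partition regularity applied to $C=(A\cap C)\cup((G\setminus A)\cap C)\in\cW$ forces $(G\setminus A)\cap C\in\cW$. Running the same argument with $D\cap C\in p$ in place of $C$ for any $D\in p$, then using upward closure, shows that $p\cup\{G\setminus A\}$ closed under finite intersections still lies in $\cW$. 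Maximality then forces $G\setminus A\in p$, so $p$ is a $\cB$-type. By (i), every set in $p$ is generic, so $p$ is a generic type.

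The only subtle step is the maximal-extension argument in (i) $\Rightarrow$ (iii); once partition regularity and upward closure of $\cW$ are in hand, it is a careful but routine manipulation of intersections. The rest is essentially a translation between the combinatorial notions and the type space.
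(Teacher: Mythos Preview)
Your proof is correct. The paper does not supply its own proof of this fact: it is stated as a known result with a reference to \cite[Lemma 1.5]{NePe}, after citing \cite{BrownPS} and \cite{BHM} for the partition regularity of weakly generic sets. Your argument fills in exactly the expected details: the duality $(i)\Leftrightarrow(ii)$ is immediate from the stated equivalences, $(iii)\Rightarrow(i)$ is a straightforward translation, and your maximal-filter construction for $(i)\Rightarrow(iii)$ is the standard use of partition regularity.

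One small remark on the terse step in $(i)\Rightarrow(iii)$: when you ``run the same argument with $D\cap C$ in place of $C$'', you are implicitly using that $A\cap D\cap C\notin\cW$ because $A\cap D\cap C\seq A\cap C\notin\cW$ and the complement of $\cW$ is downward closed (equivalently, $\cW$ is upward closed). This is correct, but worth saying explicitly since upward closure of $\cW$ is doing real work here, not just in the final step.
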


Let $G$ be a group. 
By Theorem \ref{thm:left-inv}, the conditions in Fact \ref{fact:w=g} hold for $\cB^{\st}_G$. Let $\mu$ be the unique bi-invariant probability measure on $\cB^{\st}_G$. Then  $A\in\cB^{\st}_G$ is supergeneric if and only if $G\backslash A$ is non-generic, and this also coincides with $\mu(A)=1$. Altogether, ``non-genericity" is a canonical notion of smallness for $\cB^{\st}_G$. Recall that $\cB^{\st}_G$ contains all subgroups of $G$ (see the proof of Corollary \ref{cor:stabCR}). The Boolean algebra generated by cosets of subgroups of $G$ is also called the ``coset ring" of $G$, and a more quantitative account of stability for the coset ring of an abelian group is given by Sanders in \cite{SandersSR}. When $G$ is abelian, its coset ring coincides with the \emph{Fourier algebra} of $G$, i.e., the algebra of subsets $A\seq G$ such that $1_A=\hat{\nu}$ (the \emph{Fourier-Stieljtes transform} of $\nu$) for some Borel measure $\nu$ on the compact group of characters on $G$ as a discrete group (see \cite[Theorem 3.1.3]{RudinFA}). Theorem \ref{thm:intro} says that, for arbitrary $G$,  $\cB^{\st}_G$ is essentially controlled by the coset ring of $G$ ``up to small sets". By restricting $\mu$, we also obtain a bi-invariant  probability measure on the coset ring of any group $G$. In this way, we can view the existence of $\mu$ as an extension of the following classical result of B. H. Neumann (see \cite[Section 4]{NeuBH}).

\begin{proposition}
Let $G$ be a group and suppose $G=C_1\cup\ldots\cup C_n$, where each $C_i$ is a coset of a subgroup $H_i\leq G$. Let $I$ be the set of $i\leq n$ such that $H_i$ has finite index in $G$. Then $G=\bigcup_{i\in I} C_i$ and $[G:H_i]\leq |I|$ for some $i\in I$.
\end{proposition}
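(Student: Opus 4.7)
The plan is to deduce both conclusions directly from the bi-invariant probability measure $\mu$ on $\cB^{\st}_G$ guaranteed by Theorem \ref{thm:intro}(a) and Lemma \ref{lem:must}. By Fact \ref{fact:BSTG}, every subgroup of $G$ lies in $\cB^{\st}_G$, and the Boolean algebra structure then shows that every coset of every subgroup (and every finite intersection of subgroups) lies in $\cB^{\st}_G$, so $\mu$ assigns a value to each $C_i$ and to the relevant auxiliary sets.

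The first step is to compute $\mu$ on cosets. Bi-invariance forces $\mu(gH) = \mu(H)$ for every $g \in G$. If $H$ has finite index $k$, its $k$ cosets partition $G$, so each has measure $1/k$. If $H$ has infinite index, picking any $n$ disjoint cosets gives $n\mu(H) \le 1$, so $\mu(H) \le 1/n$ for every $n$, hence $\mu(H) = 0$. Thus $\mu(C_i) = 1/[G:H_i]$ for $i \in I$ and $\mu(C_i) = 0$ for $i \notin I$.

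The index bound is then immediate from finite subadditivity applied to the cover:
\[
1 = \mu(G) \le \sum_{i=1}^n \mu(C_i) = \sum_{i \in I} \frac{1}{[G:H_i]},
\]
so some summand is at least $1/|I|$, i.e.\ $[G:H_i] \le |I|$ for some $i \in I$.

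For the equality $G = \bigcup_{i \in I} C_i$, I would argue by contradiction. Let $H = \bigcap_{i \in I} H_i$; this is a finite-index subgroup of $G$ (intersection of finitely many finite-index subgroups). Since $H \le H_i$ for each $i \in I$, every $C_i$ with $i \in I$ is a union of cosets of $H$, and hence so is $A := \bigcup_{i \in I} C_i$. Consequently $G \setminus A$ is also a union of cosets of $H$. If $G \setminus A \ne \emptyset$, it must contain some coset $D$ of $H$, and this $D$ is forced into $\bigcup_{i \notin I} C_i$. Finite subadditivity then gives
\[
\frac{1}{[G:H]} = \mu(D) \le \sum_{i \notin I} \mu(C_i) = 0,
\]
a contradiction. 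Hence $G = \bigcup_{i \in I} C_i$.

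I do not anticipate a real obstacle here: all the ingredients (existence and bi-invariance of $\mu$, finite additivity, and the fact that subgroups and their cosets lie in $\cB^{\st}_G$) are already in place. The only point to be careful about is confirming at the outset that all of the sets being weighed by $\mu$ really do lie in $\cB^{\st}_G$, so that the measure is defined on them.
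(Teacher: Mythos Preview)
Your proposal is correct and follows essentially the same route as the paper: compute $\mu$ on cosets, use subadditivity for the index bound, and for the covering claim observe that $G\setminus\bigcup_{i\in I}C_i$ is a $\mu$-null union of cosets of the finite-index subgroup $\bigcap_{i\in I}H_i$. The only detail the paper makes explicit that you leave implicit is the reduction to \emph{left} cosets (a right coset of $H_i$ is a left coset of a conjugate of $H_i$), which is needed so that $A$ is genuinely a union of left cosets of $H$; you may want to add a word on this.
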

\begin{proof}
We have $1=\mu(G)=\sum_{i=1}^n \mu(C_i)=\sum_{i\in I}\mu(C_i)=\sum_{i\in I}1/[G:H_i]$, which immediately implies the latter claim. Without loss of generality, each $C_i$ is a \emph{left} coset of $H_i$ (note that any right coset of $H_i$ is a left coset of some conjugate of $H_i$). Let $A=\bigcup_{i\in I} C_i$. Then $\mu(G\backslash A)=0$, and so $G=A$ since $A$ is a union of left cosets of the finite-index subgroup $\bigcap_{i\in I} H_i$.
\end{proof}

In model theory, a definable group is called \emph{definably connected} if it has no definable finite-index subgroups. We have a natural analogue of this notion in the local setting.
Given a group $G$ and a left-invariant Boolean algebra $\cB\seq\cP(G)$, we say $G$ is \textbf{$\cB$-connected} if no proper finite-index subgroup of $G$ is in $\cB$ (i.e., $G^0_{\!\cB}=G$). For example, $G$ is $\cB^{\st}_G$-connected if and only if it has no proper finite-index subgroups (equivalently,  the profinite completion $\hat{G}$ of $G$ is trivial). Examples of $\cB^{\st}_G$-connected groups include divisible groups and infinite simple groups.

\begin{corollary}\label{cor:connected}
Let $G$ be a group and suppose $\cB$ is a left-invariant sub-algebra of $\cB^{\st}_G$. The following are equivalent.
\begin{enumerate}[$(i)$]
\item $G$ is $\cB$-connected.
\item $G$ is $\cB^\sharp$-connected.
\item There is a unique generic type in $S(\cB)$.
\item If $A\in\cB$ then exactly one of $A$  or $G\backslash A$ is supergeneric.
\item If $A\in\cB$ then $A$ is generic if and only if it is supergeneric.
\item Then unique left-invariant measure on $\cB$ is $\{0,1\}$-valued.
\end{enumerate}
\end{corollary}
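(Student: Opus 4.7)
The plan is to establish the equivalences by proving $(i) \Leftrightarrow (ii)$, $(i) \Leftrightarrow (iii)$, $(iii) \Leftrightarrow (vi)$, and $(iii) \Leftrightarrow (iv) \Leftrightarrow (v)$, all of which will flow from results earlier in the paper applied to $\cB$ and its bi-invariant closure $\cB^\sharp$.

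For $(i) \Leftrightarrow (ii)$ I would note that $\cB \seq \cB^\sharp$ yields $G^0_{\!\cB^\sharp} \seq G^0_{\!\cB}$, handling $(ii) \Rightarrow (i)$ immediately; for the converse, the hypothesis $G^0_{\!\cB} = G$ means $G^0_{\!\cB}$ has finite index in $G$, so Theorem \ref{thm:local}$(3)$ applies and gives $G^0_{\!\cB^\sharp} = \bigcap_{a \in G} aG^0_{\!\cB} a\inv = G$. For $(i) \Leftrightarrow (iii)$ the key tool is the action-preserving bijection $\Sg(\cB) \leftrightarrow G/G^0_{\!\cB}$ from Theorem \ref{thm:local}$(1)(a)$: if $G^0_{\!\cB} = G$ then the right-hand side is a point, and conversely if $|\Sg(\cB)| = 1$ then $\Sg(\cB)$ is finite, Corollary \ref{cor:fingen} produces $[G:G^0_{\!\cB}] < \infty$, and the bijection forces the index to equal $1$.

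For $(iii) \Leftrightarrow (vi)$ I would exploit the natural correspondence between $\cB$-types and $\{0,1\}$-valued finitely additive probability measures on $\cB$. Given a unique generic type $p$, the indicator $\mu(A) := \mathbf{1}_{[A \in p]}$ is such a measure, with left-invariance following from $gp = p$ (since $gp$ is also generic), and it is the unique left-invariant measure on $\cB$ by Theorem \ref{thm:left-inv}$(e)$. Conversely, a $\{0,1\}$-valued left-invariant $\mu$ determines $p := \{A \in \cB : \mu(A) = 1\}$, which is easily checked to be a $\cB$-type; Theorem \ref{thm:intro}$(b)$ then ensures $p$ is generic and absorbs every other generic type.

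For the remaining block, Theorem \ref{thm:left-inv}$(c)$ supplies $\Sg(\cB) \ne \emptyset$, so Fact \ref{fact:w=g} makes generic and weakly generic coincide on $\cB$. Combining this with the standard characterization $A$ supergeneric $\Leftrightarrow$ $G \backslash A$ not weakly generic gives, for $A \in \cB$, that $A$ is supergeneric iff $G \backslash A$ is not generic. Under this translation — using also Theorem \ref{thm:gensets}$(b)$ that at least one of $A, G \backslash A$ is always generic — both $(iv)$ and $(v)$ reduce to the same condition: for every $A \in \cB$, \emph{exactly} one of $A$ or $G \backslash A$ is generic. This condition is immediate from $(iii)$; conversely, I would recover $(iii)$ by setting $p := \{A \in \cB : A \text{ is generic}\}$, the nontrivial check being closure under finite intersections, which follows from the partition regularity in Theorem \ref{thm:gensets}$(a)$. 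That final verification — assembling a type out of a partition-regular family under an exactly-one dichotomy — is the only step that requires any real care, and is where I would expect the main obstacle to lie.
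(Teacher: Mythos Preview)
Your proof is correct, but the organization differs from the paper's in a couple of notable ways.

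The paper runs two short cycles: $(i)\Rightarrow(vi)\Rightarrow(v)\Rightarrow(i)$ and $(i)\Rightarrow(iii)\Rightarrow(ii)\Rightarrow(i)$, with $(iv)\Leftrightarrow(v)$ handled separately via Theorem~\ref{thm:gensets}$(b)$. The key difference is at $(i)\Rightarrow(vi)$: the paper simply invokes Corollary~\ref{cor:structure}, which says $\mu(A)=|\cC|/[G:H]$ for some $H\leq^{\cB}_{\fin}G$; under $\cB$-connectedness $H=G$, forcing $\mu(A)\in\{0,1\}$. It then closes the loop with the one-line observation $(v)\Rightarrow(i)$ that a proper finite-index subgroup in $\cB$ would be generic but not supergeneric. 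Your route instead links $(vi)$ to $(iii)$ directly via the type/measure correspondence, and links $(iv),(v)$ to $(iii)$ by explicitly assembling the generic type $p=\{A\in\cB:A\text{ generic}\}$ using partition regularity (Theorem~\ref{thm:gensets}$(a)$). Both arguments are valid; the paper's is shorter because the structure theorem has already done the work, while yours is more self-contained and makes the role of the unique generic type more transparent.
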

\begin{proof}
$(i)\Rightarrow (vi)$ follows from Corollary \ref{cor:structure}, and $(vi)\Rightarrow (v)$ follows from Fact \ref{fact:w=g} (and the subsequent discussion). For $(v)\Rightarrow (i)$, note that any proper finite-index subgroup of $G$ in $\cB$ is generic and not supergeneric.     We have $(i)\Rightarrow (iii)$ by Theorem \ref{thm:local}$(1)$, $(iii)\Rightarrow (ii)$ by Theorem \ref{thm:local}$(3)$, and $(ii)\Rightarrow (i)$ is trivial. Finally, $(iv)$ and $(v)$ are equivalent by Theorem \ref{thm:gensets}$(b)$.
\end{proof}

A classical result from the model theory of groups is that if $G$ is a definably connected group definable in a stable theory, then $G=AB$ for any definable generic subsets $A,B\seq G$. Let us prove this in our general setting. 

\begin{corollary}\label{cor:coninv}
Let $G$ be a group, and suppose $\cB$ is a left-invariant sub-algebra of $\cB^{\st}_G$ such that $G$ is $\cB$-connected. Then $G=AB\inv$ for any generic $A,B\in\cB$.
\end{corollary}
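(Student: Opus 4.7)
The plan is to fix an arbitrary $g\in G$ and show $g\in AB\inv$, which is equivalent to exhibiting some $b\in B$ with $gb\in A$, i.e., showing that $g\inv A\cap B$ is nonempty. Since $\cB$ is left-invariant and $A\in\cB$, we have $g\inv A\in\cB$, and since left translates of generic sets are generic, $g\inv A$ is generic. So the task reduces to showing that two generic members of $\cB$ always meet.

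By $\cB$-connectedness and Corollary \ref{cor:connected}$(iii)$, there is a \emph{unique} generic type $p\in S(\cB)$. Apply Corollary \ref{cor:gentype} to the singleton family $\{g\inv A\}$ (trivially closed under finite intersections and consisting only of generic sets): there is some $q_1\in\Sg(\cB)$ with $g\inv A\in q_1$. Apply it again to $\{B\}$ to obtain $q_2\in\Sg(\cB)$ with $B\in q_2$. By uniqueness of the generic type, $q_1=q_2=p$, so both $g\inv A$ and $B$ belong to $p$.

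Now $p$ is a $\cB$-type, hence closed under finite intersections and does not contain $\emptyset$ (Definition \ref{def:Stone}). Therefore $g\inv A\cap B\in p$, and in particular $g\inv A\cap B\neq\emptyset$. Choosing any $b\in g\inv A\cap B$ yields $gb\in A$ and $b\in B$, whence $g=(gb)b\inv\in AB\inv$. Since $g$ was arbitrary, $G=AB\inv$.

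There is no real obstacle here: the argument is a direct application of the equivalence $(i)\Leftrightarrow(iii)$ in Corollary \ref{cor:connected} together with the existence of generic types in generic sets from Corollary \ref{cor:gentype}. The only subtlety worth flagging is that left-invariance of $\cB$ (rather than mere bi-invariance) is exactly what makes $g\inv A$ belong to $\cB$, so the argument goes through in the full generality stated.
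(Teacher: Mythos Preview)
Your proof is correct and follows essentially the same route as the paper: fix $g\in G$, use $\cB$-connectedness via Corollary~\ref{cor:connected}$(iii)$ to get a unique generic type $p$, observe that both relevant generic sets lie in $p$, and conclude their intersection is nonempty. The only cosmetic difference is that the paper translates $B$ (showing $A\cap gB\neq\emptyset$) while you translate $A$ (showing $g\inv A\cap B\neq\emptyset$); these are equivalent.
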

\begin{proof}
Suppose $A,B\in \cB$ are generic, and fix $g\in G$. Let $p\in \Sg(\cB)$ be the unique generic type. Then $A,gB\in p$, and so $A\cap gB\neq\emptyset$, which implies $g\in AB\inv$.
\end{proof}

\subsection*{Acknowledgements} I would like to thank Anand Pillay for directing me to the work of Ellis and Nerurkar in \cite{EllNer}, and also Kyle Gannon, Jason Long, and Joe Zielinski for helpful conversations. Some parts  of Section \ref{sec:SAC} relate to discussions with Artem Chernikov, James Freitag, Isaac Goldbring, and Frank Wagner at the 2017 AIM workshop ``Nonstandard methods in combinatorial number theory". I am also indebted to the referee for their comments and suggestions, which led to significant improvement to the exposition.

\bibliographystyle{amsplain}

\end{document}